\pdfoutput=1
\documentclass[10pt,oneside]{amsart}
\usepackage[
paper=a4paper,
headsep=20pt,text={136mm,208mm},centering,includehead
]{geometry}
\usepackage{graphicx}
\usepackage[dvipsnames,usenames]{xcolor}
\usepackage[colorlinks=true,urlcolor=MidnightBlue,linkcolor=MidnightBlue,citecolor=MidnightBlue]{hyperref}
\hypersetup{
	linkbordercolor={1 0 0}, 
	citebordercolor={0 1 0} 
}
\usepackage{amssymb,mathtools,mathrsfs,stmaryrd,wasysym,bbm,verbatim,faktor,cite}
\usepackage[UKenglish]{babel}
\usepackage[noabbrev,sort]{cleveref}
\usepackage{tikz}
\usetikzlibrary{trees,decorations.pathmorphing,decorations.markings,matrix,shapes}
\usepackage[backgroundcolor=green,linecolor=green,bordercolor=green]{todonotes}
\usepackage[shortlabels]{enumitem}
\setlist[enumerate]{itemsep=5pt}
\usepackage[outdir=./]{epstopdf}

\usepackage{float}

\newtheorem{theorem}{Theorem}[section]
\newtheorem{definition}[theorem]{Definition}
\newtheorem*{theorem*}{Theorem}
\newtheorem*{corollary*}{}
\newtheorem{proposition}[theorem]{Proposition}
\newtheorem{corollary}[theorem]{Corollary}
\newtheorem{lemma}[theorem]{Lemma}

\newtheorem{question}{Question}
\newtheorem{example}[theorem]{Example}
\newtheorem{remark}[theorem]{Remark}

\numberwithin{equation}{section}

\makeatletter

\providecommand{\proofname}{Proof}
\makeatother

\newcommand{\dkh}{DKh}

\newcommand{\K}{\mathbb{K}}
\newcommand{\Kinv}{\mathbb{K}^{-1}}
\newcommand{\G}{\mathcal{G}}
\newcommand{\BG}{\mathbb{G}}

\newcommand{\Z}{\mathbb{Z}}
\newcommand{\graphene}{\mathcal{G}}
\newcommand{\BL}{\mathbb{L}}
\newcommand{\ra}{\rightarrow}

\newcommand{\CGlyph}{\raisebox{-0.25\height}{\includegraphics[width=0.5cm]{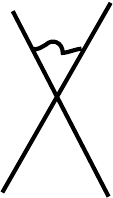}}}
\newcommand{\VGlyph}{\raisebox{-0.25\height}{\includegraphics[width=0.5cm]{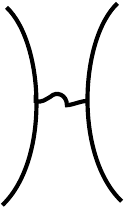}}}
\newcommand{\YGlyph}{\raisebox{-0.25\height}{\includegraphics[width=0.5cm]{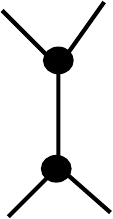}}}

\begin{document}

\title{On ribbon graphs and virtual links}

\author{Scott Baldridge}
\address{Department of Mathematics, Louisiana State University\\
Baton Rouge, LA}
\email{sbaldrid@math.lsu.edu}

\author{Louis H. Kauffman}
\address{Department of Mathematics, Statistics and Computer Science\\ 851 South Morgan Street\\ University of Illinois at Chicago\\
Chicago, Illinois 60607-7045} 

\address{Department of Mechanics and Mathematics\\
Novosibirsk State University\\
Novosibirsk, Russia}
\email{kauffman@uic.edu}	
	
\author{William Rushworth}
\address{Department of Mathematics, Syracuse University}
\email{wrushwor@syr.edu}	

\def\subjclassname{\textup{2020} Mathematics Subject Classification}
\expandafter\let\csname subjclassname@1991\endcsname=\subjclassname
\expandafter\let\csname subjclassname@2000\endcsname=\subjclassname
\subjclass{57M15, 57K10, 05C10}
	
\keywords{ribbon graph, virtual link, perfect matching}

\begin{abstract}
We introduce a new equivalence relation on decorated ribbon graphs, and show that its equivalence classes directly correspond to virtual links. We demonstrate how this correspondence can be used to convert any invariant of virtual links into an invariant of ribbon graphs, and vice versa.
\end{abstract}
	
\maketitle

\section{Introduction}\label{Sec:intro}
\noindent In the traditional relationship between graph theory and knot theory graphs are intermediate objects in the study of links: a graph is constructed from a link diagram, which is then used to define an invariant of the link represented by the diagram. In this paper we introduce a construction that allows one, among other things, to invert the traditional relationship, using links to define new invariants of graphs. This construction also reinforces the traditional relationship, allowing for greater fidelity when transferring graph-theoretic constructions to knot theory.

\subsection{Correspondence between ribbon graphs and virtual links}\label{Sec:correspondence}
This paper is centred around the introduction of a direct correspondence between equivalence classes of decorated trivalent ribbon graphs and virtual links. By construction, given an invariant of virtual links we may use this correspondence to pull it back to an invariant of decorated graphs. Such invariants often contain information that is independent of the decoration placed on the graph.

This correspondence has its genesis in work of the first author in \cite{Baldridge2018}. Baldridge defined a homology theory of trivalent ribbon graphs with a perfect matching, and conjectured that it could be used to count the number of Tait colorings of the argument graph. This was verified in \cite{BaldridgeLowranceMcCarty2018}, establishing that the homology theory contains information regarding the abstract graph (independent of the choice of perfect matching).

It was recognized by the authors of the present work that the homology theory defined by Baldridge bears a striking similarity to the Khovanov homology of virtual links \cite{Baldridge2018}\footnote{Adam Lowrance also noticed the similarity.}. Virtual links are a generalization of classical links in the \(3\)-sphere, and Khovanov homology is a powerful group-valued invariant of such links. As we demonstrate in \Cref{Sec:graphenes}, one may convert a decorated ribbon graph into a virtual link diagram using the following replacement at matched edges
\begin{equation}\label{Eq:kmap}
	\raisebox{-20pt}{\includegraphics[scale=0.75]{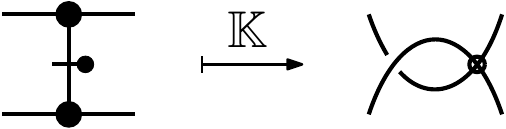}}
\end{equation}
In this paper we demonstrate that the replacement depicted above defines a functor, \(\K \), that allows any invariant of virtual links to be converted into an invariant of decorated ribbon graphs, and vice versa.

Links in $S^3$ and virtual links are described combinatorially via diagrams and the Reidemeister moves (see \Cref{Fig:crms}). We give a new equivalence relation on graphs that respects the Reidemeister moves under \( \K \). It follows that \( \K \) uniquely associates a virtual link to an equivalence class of decorated ribbon graphs. For example, the trefoil is associated to the \( K_{3,3} \) graph (with a certain perfect matching):
\begin{center}
\includegraphics[scale=0.5]{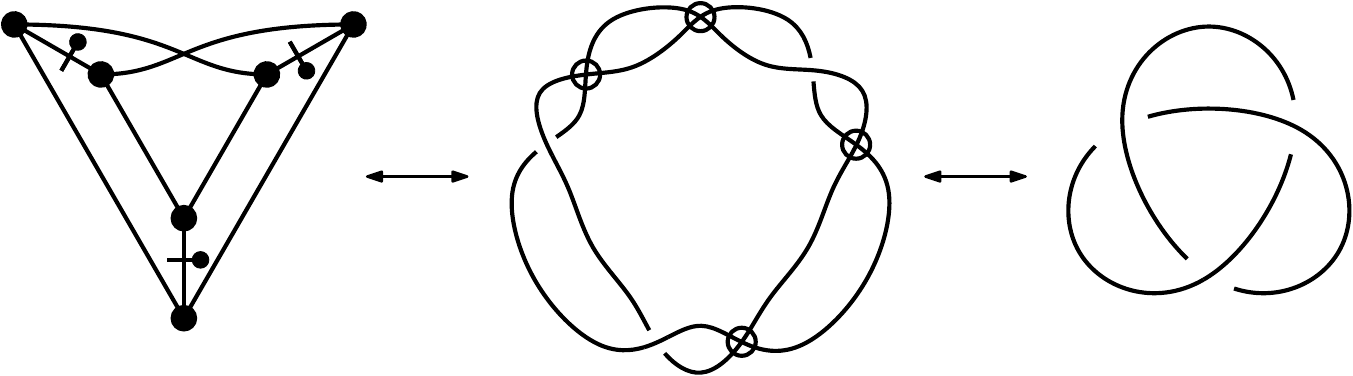}
\end{center}

The set of Reidemeister-like moves on graphs that we consider is given in \Cref{Def:ribbonmoves,Def:graphenemoves}, and the equivalence relation generated by these moves appears to be new to the graph theory literature. These moves allow a graph to be transformed into another distinct graph. For example, we permit the following digon relation
\begin{equation}\label{Eq:g2demo}
	\raisebox{-33pt}{\includegraphics[scale=0.75]{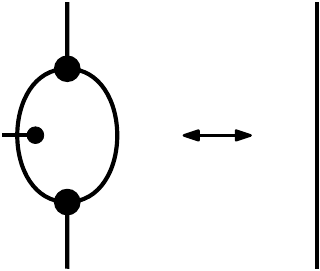}}
\end{equation}
that changes the vertex set of the graph. We refer to an equivalence class of decorated ribbon graphs under these new moves as a \emph{graphene}. This terminology is inspired by the structure of the carbon allotrope graphene, that locally forms a trivalent graph with a perfect matching, as depicted in \Cref{Fig:allotrope}.

\begin{figure}
	\includegraphics[scale=0.5]{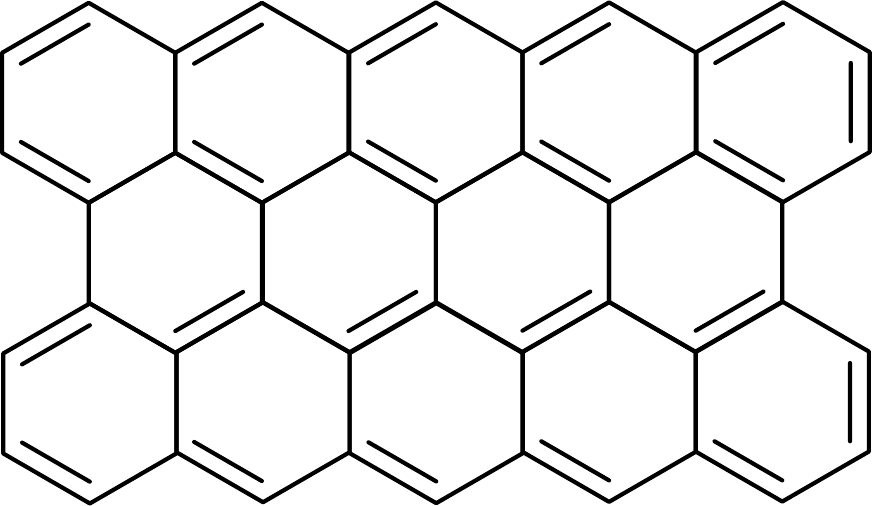}
	\caption{The chemical structure of graphene forms a trivalent graph with a perfect matching given by the double bonds.}
	\label{Fig:allotrope}
\end{figure}

The central result of this paper is that the category of graphenes is isomorphic to that of virtual links.
\begin{theorem*}
	The functor \(\K \) is an isomorphism between the category of graphenes and that of virtual links.
\end{theorem*}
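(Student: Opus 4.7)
The plan is to construct an explicit inverse functor $\Kinv$ and verify that $\K\circ\Kinv$ and $\Kinv\circ\K$ are both identities on the nose. On objects, $\Kinv$ takes a virtual link diagram $D$ and reverses the local replacement of equation~\eqref{Eq:kmap}: each classical crossing of $D$ is contracted to a pair of trivalent vertices joined by a matched edge (which records the over/under information as the decoration), and each virtual crossing becomes a transverse edge crossing in the resulting immersion of a ribbon graph. The perfect matching on the resulting graph is then precisely the set of edges that came from classical crossings, and it is trivalent by construction.

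First I would verify that $\K$ descends to the quotient by the graphene equivalence relation, i.e., that every move in \Cref{Def:ribbonmoves,Def:graphenemoves} (including the digon move \eqref{Eq:g2demo}) is sent by $\K$ to a finite sequence of (possibly generalized) Reidemeister moves on virtual link diagrams. This is a local, move-by-move verification: one applies the replacement \eqref{Eq:kmap} to each side of a graphene move and identifies the resulting tangle with a sequence of Reidemeister moves by inspection. Second, I would show that $\Kinv$ is well-defined on virtual links by checking that each of the seven generating moves (classical R1--R3, virtual vR1--vR3, and the mixed move) can be realized by a sequence of graphene moves after applying the reverse replacement. The digon move is expected to play a central role here, because a crossing adjacent to its inverse naturally produces a digon in the expanded graph picture that must then be collapsed.

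Once both functors are verified to be well-defined on equivalence classes, checking $\K\circ\Kinv=\mathrm{id}$ and $\Kinv\circ\K=\mathrm{id}$ is immediate from locality: applying \eqref{Eq:kmap} at each matched edge and then contracting recovers the original diagram, and vice versa. For the extension to morphisms, one uses that morphisms in both categories are generated by the respective local moves (Reidemeister moves on one side, graphene moves on the other) modulo relations; since the generators biject under the well-definedness checks above, fullness and faithfulness follow.

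The main obstacle I anticipate is the second step, namely showing that every classical and virtual Reidemeister move lifts to a sequence of graphene moves. In particular, the classical R3 move and the mixed virtual/classical move will likely require nontrivial combinations of the trivalent ribbon graph moves together with the digon relation, and care is needed to ensure that the matched-edge decoration is preserved through the entire sequence (not merely the underlying trivalent structure). A secondary subtlety is to confirm that $\Kinv$ applied to a connected sum or split diagram yields a connected or disconnected ribbon graph consistent with how graphene equivalence is defined, so that no spurious identifications or separations are introduced when passing back and forth.
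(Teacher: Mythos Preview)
Your proposal is correct and follows essentially the same route as the paper: define \(\Kinv\) by the local inverse replacement (classical crossings to matched edges, virtual crossings to edge intersections), verify move-by-move that \(\K\) sends graphene moves to virtual Reidemeister moves and that \(\Kinv\) sends virtual Reidemeister moves to graphene moves, and conclude that the two are mutually inverse by locality. One minor correction: a pair of inverse crossings (the \(R2\) configuration) yields a \emph{square} under \(\Kinv\), handled by the \(G2\) move, while the digon relation \(G1\) corresponds to \(R1\); also, the categories \(\mathbb{G}\) and \(\mathbb{L}\) have no non-identity morphisms, so the fullness/faithfulness discussion is unnecessary once the bijection on objects is established.
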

This isomorphism is defined via diagrams, allowing it to be used to transport invariants between the two categories.

As outlined above, a graphene is an equivalence class of decorated ribbon graphs. These decorations include a perfect matching, an orientation of the ribbon graph (as a surface), and further auxiliary data. As such, when pulling back an invariant of virtual links to an invariant of graphenes via \(\K \), the new invariant depends on the decoration in addition to the underlying graph. Nevertheless, in many cases such invariants contain graph-theoretic information that is independent of the decoration; we give examples of such a situation in \Cref{Sec:homologytheories} and \Cref{Sec:embeddings}. It follows that \(\K \) provides an effective way to apply virtual link invariants to the study of graphs. Conversely, \( \K \) allows graph-theoretic constructions to be carried over to knot theory.

\subsection{The Penrose formula and the Jones polynomial}\label{Sec:motivation}
In his investigation of diagrammatic tensor calculus via graphs Penrose discovered a surprising formula that counts the number of Tait colorings (\(3\)-edge colorings) of a planar trivalent graph \cite{Penrose}.  Many modern techniques in graph theory and topology are descendants of this result. In particular, the second author's discovery of the bracket model for the Jones Polynomial was motivated by the Penrose formula \cite{Kauffman1987a}. More recently, the first author generalized the Penrose formula to a \(2\)-isomorphism invariant of trivalent ribbon graphs with perfect matchings \cite{Baldridge2018, Baldridge2}. We now describe how investigating these constructions together with the Penrose formula naturally leads to the functor \(\K\) depicted in \Cref{Eq:kmap}.

Let us outline the Penrose formula. Given a planar trivalent graph, one may resolve its edges with distinct vertices inductively using the formula
\begin{align}
\label{eq:Penformula1}\left[~\raisebox{-0.33\height}{\includegraphics[scale=0.25]{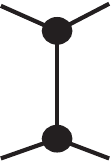}} ~\right] &= \left[\raisebox{-0.33\height}{\includegraphics[scale=0.25]{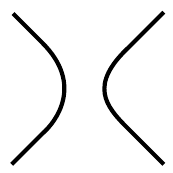}}\right] - \left[\raisebox{-0.33\height}{\includegraphics[scale=0.25]{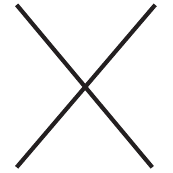}}\right]
\\
\label{eq:Penformula2}\left[\bigcirc \cup D \right] &= 3 \left[D\right],
\end{align}
together with the declaration that the empty graph evaluates to \( 1 \). \Cref{eq:Penformula2} allows for the removal of immersed copies of \( S^1 \), at the expense of multiplying by \( 3\). For any planar trivalent graph, repeated application of \Cref{eq:Penformula1,eq:Penformula2} obtains an integer, equal to the number of Tait colorings of the graph. For example, the number of Tait colorings of the theta graph is 6, and a computation using the Penrose formula produces the same result: \(\left[\raisebox{-0.25\height}{\includegraphics[scale=0.18]{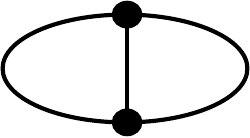}}\right]= 3^2 - 3 = 6\).

The second author noticed that a similar formula holds for links, and that it recovers the Jones polynomial (suitably normalized) \cite{Kauffman1987a}. This {\em Kauffman bracket} resolves crossings of link diagrams instead of edges of graphs, and yields a Laurent polynomial in the variable \(q\):
\begin{align}
\label{eq:bracket-crossing} \left\langle \raisebox{-0.33\height}{\includegraphics[scale=0.25]{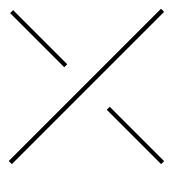}}  \right\rangle &= \left\langle \raisebox{-0.33\height}{\includegraphics[scale=0.25]{A-smoothing.pdf}}  \right\rangle -q \left\langle \raisebox{-0.33\height}{\includegraphics[scale=0.25]{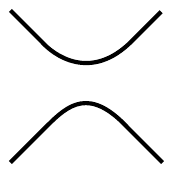}} \right\rangle\\
\label{eq:bracket-disjoint-circ} \left\langle \bigcirc \cup D \right\rangle &= (q^{-1}+q) \left\langle D \right\rangle, 
\end{align}  
together with the declaration that the empty link evaluates to \( 1 \). For example,
\begin{equation*}
	\begin{aligned}
		\left\langle \raisebox{-6pt}{\includegraphics[scale=0.3]{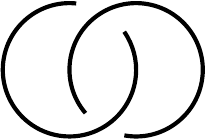}} \right\rangle &= \left\langle \raisebox{-6pt}{\includegraphics[scale=0.3]{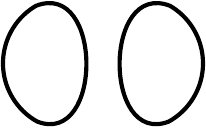}} \right\rangle - q \left\langle \raisebox{-7pt}{\includegraphics[scale=0.3]{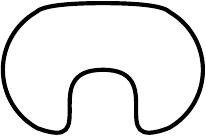}} \right\rangle -q \left\langle \raisebox{-7pt}{\includegraphics[scale=0.3]{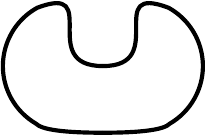}} \right\rangle + q^2 \left\langle \raisebox{-7pt}{\includegraphics[scale=0.3]{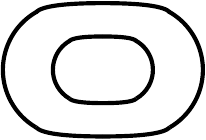}} \right\rangle \\
		&= q^{-2}+1+q^2+q^4
	\end{aligned}
\end{equation*}
and taking an appropriate normalization (determined by the writhe of the diagram) yields the Jones polynomial.

Famously, the Jones polynomial may be \emph{categorified}, i.e.\ lifted to a group-valued invariant, known as \emph{Khovanov homology} \cite{Khovanov1999}. In light of the similarity between the Kauffman bracket and the Penrose formula, it is natural to ask if the latter can be modified to yield a polynomial that can itself be categorified.\footnote{A Penrose polynomial has been defined but does not immediately lend itself to such a categorification \cite{Aigner1997,EllisMonaghan2013APP}.}

One may attempt to modify \Cref{eq:Penformula1,eq:Penformula2} to more closely resemble \Cref{eq:bracket-crossing,eq:bracket-disjoint-circ} by introducing a variable. However, na\"\i ve attempts fail to yield an invariant of the graph: the result depends on the order in which the edges are resolved. The first author overcame this difficulty by considering planar trivalent graphs with a given perfect matching \cite{Baldridge2018}. By resolving only the matched edges, one can define an invariant of the \( (\text{graph},\text{matching}) \) pair, known as the {\em $2$-factor polynomial}. The formula describing this polynomial incorporates aspects of both the Penrose formula and the Kauffman bracket:
\begin{align}
\label{eq:bracket-2-factor}\left\langle ~\raisebox{-0.33\height}{\includegraphics[scale=0.25]{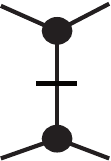}} ~ \right\rangle_2 &= \left\langle \raisebox{-0.33\height}{\includegraphics[scale=0.25]{A-smoothing.pdf}} \right\rangle_2 -q \left\langle \raisebox{-0.33\height}{\includegraphics[scale=0.25]{X-smoothing.pdf}}  \right\rangle_2 \\
\label{eq:disjoint-circ-2-factor}\left\langle \bigcirc \cup D \right\rangle_2 &= (q^{-1}+q) \left\langle D \right\rangle_2
\end{align}
where the crossbar of \Cref{eq:bracket-2-factor} denotes a matched edge.

Notice that evaluating \Cref{eq:bracket-2-factor,eq:disjoint-circ-2-factor} at \( q = 1 \) almost recovers \Cref{eq:Penformula1,eq:Penformula2}: \Cref{eq:disjoint-circ-2-factor} has a coefficient of \(2\) and \Cref{eq:Penformula2} has a coefficient of \(3\). This discrepancy is explained by the fact that the Penrose formula counts the Tait colorings of the argument graph, while the \(2\)-factor polynomial counts only those colorings that assign every matched edge the same color. That is, the \(2\)-factor polynomial counts 2-factors that span the perfect matching \cite{BaldridgeLowranceMcCarty2018}. For example,
\begin{equation*}
	\begin{aligned}
		\left\langle \raisebox{-9pt}{\includegraphics[scale=0.3]{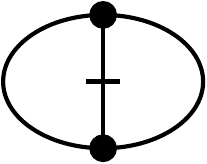}} \right\rangle_2 &= (q^{-1}+q)^2 -q(q^{-1}+q) \\
		&= q^{-2}+1.
	\end{aligned}
\end{equation*}
Evaluating the above polynomial at \( q =1  \) recovers the fact that, up to symmetry, there are exactly two Tait colorings with a fixed color on the matched edge.

The first author defined a homology theory that categorifies the \(2\)-factor polynomial, just as Khovanov homology categorifies the Jones polynomial \cite{Baldridge2018}. Specifically, to a planar trivalent graph with a perfect matching, one may associate a bigraded abelian group, the graded Euler characteristic of which recovers the \(2\)-factor polynomial.

The functor \( \K \) is naturally borne out of the affinity between the \(2\)-factor polynomial of graphs and the Jones polynomial of virtual links established above. Consider the following region of a trivalent graph with a perfect matching: \raisebox{-0.33\height}{\includegraphics[scale=0.4]{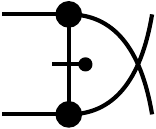}}. Applying \Cref{eq:bracket-2-factor} yields
\begin{equation*}
\left\langle \raisebox{-0.33\height}{\includegraphics[scale=0.4]{pm-edge-with-crossing.pdf}} \right\rangle_2 \ = 
\left\langle \, \raisebox{-0.33\height}{\includegraphics[scale=0.5]{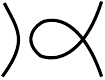}} \, \right\rangle_2 - q \left\langle \, \raisebox{-0.33\height}{\includegraphics[scale=0.5]{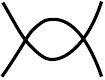}} \, \right\rangle_2 \ = \
 \left\langle \raisebox{-0.33\height}{\includegraphics[scale=0.25]{A-smoothing.pdf}}  \right\rangle_2 -q \left\langle \raisebox{-0.33\height}{\includegraphics[scale=0.25]{B-smoothing.pdf}}  \right\rangle_2 ,\end{equation*}
that corresponds directly to the right hand side of \Cref{eq:bracket-crossing}.

Thus we arrive at the relationship
\begin{equation*}
\raisebox{-0.2\height}{\includegraphics[scale=0.6]{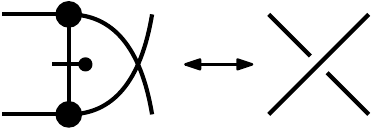}},
\end{equation*}
that is equivalent to \( \K \) as depicted in \Cref{Eq:kmap}. In this paper we elaborate on this relation, proving that \( \K \) yields an isomorphism between a category of equivalence classes of decorated trivalent graphs and the category of virtual links.

\subsection{Outline}\label{Sec:outline}
This paper is organised as follows. In \Cref{Sec:graphenes} we introduce graphenes, certain equivalence classes of decorated ribbon graphs. We then demonstrate that \( \K \) yields an isomorphism between the category of graphenes and the category of virtual links in \Cref{Sec:kmap}. In \Cref{Sec:invariants} we outline new invariants of graphenes that are defined using \( \K \). We discuss various homology theories of virtual links and graphenes in \Cref{Sec:homologytheories}, before demonstrating that one such theory can be used to study strong embeddings of a graph in \Cref{Sec:embeddings}.

\subsection*{Acknowledgements}
Kauffman's work was supported by the Laboratory of Topology and Dynamics, Novosibirsk State University (contract no.\ 14.Y26.31.0025 with the Ministry of Education and Science of the Russian Federation).  All three authors would like to thank Ben McCarty for many helpful conversations and suggestions, and the anonymous referees for their careful reading of the paper.

\section{Graphenes}\label{Sec:graphenes}
In this section we introduce the notion of a graphene, an equivalence class of decorated trivalent ribbon graphs. In \Cref{Subsec:ribbongraphs} we describe ribbon graphs and their diagrams, before defining graphenes in \Cref{Subsec:graphenes}. In these sections a collection of moves on (diagrams of) graphs is introduced; when digesting these moves the reader may benefit from keeping the complementary knot-theoretic moves in mind, as given in \Cref{Fig:crms,Fig:vrms} on \cpageref{Fig:crms}.

\subsection{Ribbon graphs}\label{Subsec:ribbongraphs}
A graphene is an equivalence class of trivalent graphs with extra structure. One of these structures is that of a \emph{ribbon graph}. For a detailed introduction to ribbon graphs see \cite[Section 1.1.4]{Moffat2013}.

\begin{definition}[Ribbon graph]\label{Def:ribbongraph}
A ribbon graph is a graph together with a surface that deformation retracts onto the graph. Given a ribbon graph \( \Gamma \) we denote the abstract graph by \( G_{\Gamma} \), and the surface by \( F_{\Gamma} \). We say that \( G_{\Gamma} \) is the \emph{underlying graph} of \( \Gamma \), and that \( F_{\Gamma} \) is \emph{the surface associated to} \( \Gamma \).

An orientation of a ribbon graph is an orientation of the surface.
\end{definition}

Let \( \Gamma_1 \) and \( \Gamma_2 \) be ribbon graphs. The surfaces \( F_{\Gamma_1} \) and \( F_{\Gamma_2} \) have boundary; let \( \widetilde{F}_{\Gamma_i}\) denote the closed surface obtained by attaching discs to the boundary of \( F_{\Gamma_i} \), for \( i = 1, 2\). The embedding of $G_{\Gamma_i}$ into the surface \( \widetilde{F}_{\Gamma_i} \) is known as a {\em 2-cell embedding}\footnote{Such an embedding is also known as a {\em cellular embedding} or {\em cellular map}.}. We say that \( \Gamma_1\) and \( \Gamma_2 \) are \emph{equivalent} ribbon graphs if there is a homeomorphism \( f : \widetilde{F}_{\Gamma_1} \rightarrow \widetilde{F}_{\Gamma_2} \) such that \( f ( G_{\Gamma_1} ) \) and \( G_{\Gamma_2} \) are isomorphic graphs.

Henceforth all ribbon graphs are assumed to have trivalent underlying graphs and to have oriented associated surfaces (that is, they are orientable with a fixed orientation). All trivalent graphs are assumed to possess at least one perfect matching.

Notice that given a graph \( G \), a cyclic ordering of the edges at every vertex determines a ribbon graph, \( \Gamma \), with underlying graph \( G \). This ribbon graph is obtained by taking a disc for every vertex of \( G \), and attaching bands as prescribed by the edges and their cyclic ordering. Half twists may be added to the bands, provided that the resulting surface satisfies the orientability condition. It follows that the vertices (edges) of \( G \) are in bijection with the discs (bands) of \( \Gamma \), and we shall not distinguish between them, referring to \emph{vertices} and \emph{edges} of \( \Gamma \).

Two distinct ribbon graphs may have the same underlying abstract graph; an example is given in \Cref{Fig:ribbons}. These ribbon graphs are distinguished by the number of boundary components of their associated surfaces.

\begin{figure}
\includegraphics[scale=0.75]{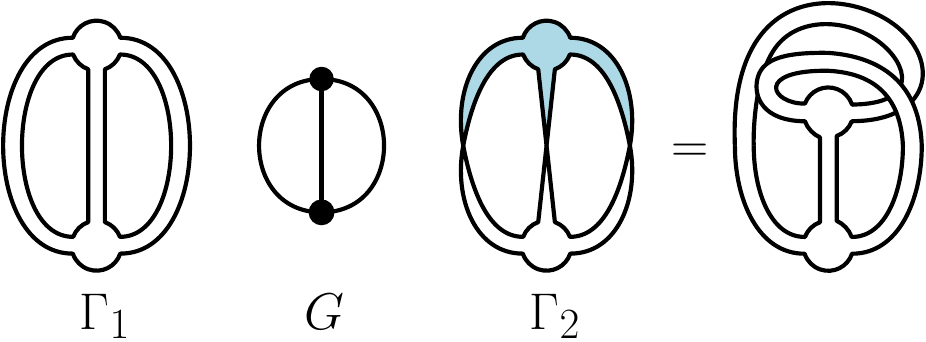}
\caption{Distinct ribbon graphs \( \Gamma_1 \) and \( \Gamma_2 \) with the same underlying graph \( G \).}
\label{Fig:ribbons}
\end{figure}

We represent ribbon graphs by the following diagrams.
\begin{definition}[Ribbon diagram]\label{Def:ribbondiagram}
A \emph{ribbon diagram} is a graph drawn in the plane (with possible intersections between its edges), with vertices decorated as either solid, \raisebox{-6pt}{\includegraphics{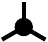}}, or hollow, \raisebox{-6pt}{\includegraphics{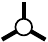}}.
A cyclic ordering of the edges at a vertex is given implicitly by such a diagram i.e.\ it is given by their ordering in the plane.
\end{definition} 

The hollow vertex is a notational abbreviation for a solid vertex with a twist; see the $M5$ move of \Cref{Def:ribbonmoves}. An example of a ribbon diagram is given in \Cref{Fig:ribbondiagram}. 

\begin{figure}
\includegraphics{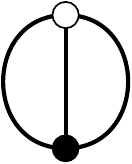}
\caption{A ribbon diagram.}
\label{Fig:ribbondiagram}
\end{figure}

Given a ribbon diagram we can recover a ribbon graph as follows. Recall that ribbon graphs are assumed to be oriented; the outward normal side of an oriented surface is depicted unshaded, and the inward normal side shaded. That is, the two sides of the oriented surface are distinguished by the shading.
\begin{enumerate}
	\item Replace the solid vertices of the diagram with the following surface component:
	\begin{center}
	\includegraphics[scale=0.75]{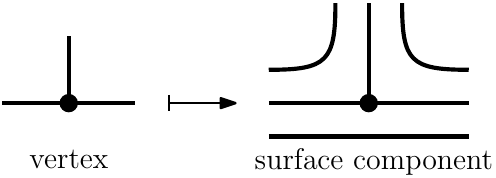}
	\end{center}
	\item Replace the hollow vertices of the diagram with the following surface component:
	\begin{center}
	\includegraphics[scale=0.75]{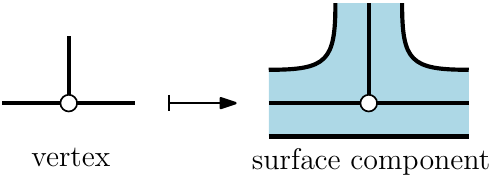}
	\end{center}
	\item Glue the surface components together as dictated by the incidence matrix of the graph depicted in the ribbon diagram, adding any half-twists necessary to produce a consistently oriented surface.
\end{enumerate}
If \( \Gamma \) is obtained from a ribbon diagram, \( D \), in this manner we say that \( D \) \emph{represents} \( \Gamma \). For example, the ribbon diagram given in \Cref{Fig:ribbondiagram} represents the ribbon graph \( \Gamma_2 \) in \Cref{Fig:ribbons}.

A ribbon diagram \( D \) is a decorated plane drawing of a graph. If \( D \) is such a plane drawing of a graph \( G \), and \( D \) represents the ribbon graph \( \Gamma \), then \( G \) is the underlying graph of \( \Gamma \), by construction.

\begin{proposition}\label{Prop:ribbonrep}
Every ribbon graph is represented by ribbon diagram.
\end{proposition}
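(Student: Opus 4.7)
The plan is to start from a given oriented ribbon graph $\Gamma$ and construct a ribbon diagram representing it in three steps: draw $G_\Gamma$ in the plane, decorate its vertices as solid or hollow so as to reproduce the rotation system of $\Gamma$, and finally appeal to orientability to match the twists that the reconstruction procedure of \Cref{Def:ribbondiagram} implicitly introduces on edges.

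First I would observe that any finite graph admits a drawing in $\R^2$ with transverse edge crossings — for instance, place the vertices of $G_\Gamma$ on a horizontal line and route each edge as an arc avoiding the others except at isolated transverse intersections. This gives an underlying plane drawing of $G_\Gamma$ of the sort allowed by \Cref{Def:ribbondiagram}. Next, at each trivalent vertex $v$ I would compare two cyclic orderings of the three incident edges: the one coming from a small oriented disc around $v$ in $F_\Gamma$ and the counterclockwise ordering induced by the plane drawing. Because three edges admit exactly two cyclic orderings, these two either agree or are reverse to one another. In the first case I mark $v$ solid; in the second I mark it hollow, using that the hollow vertex is, by definition, a solid vertex with a half-twist and so flips the local cyclic order relative to the plane. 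In this way I obtain a ribbon diagram $D$ whose underlying graph is $G_\Gamma$ and whose vertex decorations realize the rotation system of $\Gamma$.

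The remaining task, which I expect to be the main obstacle, is to verify that the ribbon graph $\Gamma'$ produced from $D$ by \Cref{Def:ribbondiagram} is equivalent to $\Gamma$. By construction, $\Gamma'$ has underlying graph $G_\Gamma$ and matches the cyclic ordering of $\Gamma$ at every vertex. The gluing rule inserts half-twists on edges to produce a consistently oriented surface; what needs to be checked is that this prescription recovers the specific oriented surface $F_\Gamma$, not some other orientable ribbon graph with the same rotation system. I would argue this by a propagation argument: after fixing an orientation on one of the disc surface components agreeing with that of $F_\Gamma$, the orientation on every adjacent component is forced, and hence the presence or absence of a half-twist on each connecting edge is determined. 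Iterating along a spanning tree of $G_\Gamma$ fixes the twists on tree edges, and the non-tree edges are then constrained by orientability as well, so the resulting surface is homeomorphic to $F_\Gamma$ via a homeomorphism carrying $G_\Gamma$ to itself. Hence $\Gamma' \cong \Gamma$ and $D$ represents $\Gamma$, as required.
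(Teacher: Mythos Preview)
Your argument is correct and takes a genuinely different route from the paper's. The paper embeds the surface $F_\Gamma$ in $\R^3$, isotopes so that $G_\Gamma$ projects regularly to the $z=0$ plane, and then reads off the solid/hollow decoration at each vertex from the sign of the $z$-component of the surface normal there; because the diagram is literally the shadow of $F_\Gamma$ itself, the claim that it represents $\Gamma$ is essentially tautological and the paper does not argue it further. You instead work purely combinatorially: take an arbitrary plane drawing of $G_\Gamma$, choose solid/hollow so as to realise the rotation system of $\Gamma$ at each vertex, and then verify via a propagation argument that the reconstructed oriented surface is forced to coincide with $F_\Gamma$. That propagation argument is, in effect, a proof of the standard fact that an oriented ribbon graph is determined by its rotation system --- a fact the paper's geometric approach sidesteps by never leaving the embedded $F_\Gamma$. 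The paper's route is shorter and visually immediate; yours avoids invoking embeddings of surfaces in $\R^3$ and makes explicit why the reconstruction is uniquely determined, at the cost of the extra verification step.
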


\begin{proof}
Let \( \Gamma \) be a ribbon graph, and consider an arbitrary embedding of \( F_{\Gamma} \) in \( \mathbb{R}^3 \). Denote the co-ordinates of \( \mathbb{R}^3 \) as \( \left( x , y , z \right) \). Isotope the embedding so that projection onto the \( z = 0 \) plane restricts to a regular projection on \( G_{\Gamma} \), injective at its vertices. The image of this regular projection is a plane drawing of \( G_{\Gamma} \). The orientation of \( F_{\Gamma} \) yields a normal vector at every point of \( G_{\Gamma} \), and as the projection is injective at vertices, there is a well defined normal vector to the vertices of the plane drawing. Decorate a vertex of the plane drawing as solid if the \(z\)-component of its normal vector is positive, and hollow if it is negative (the embedding may always be isotoped so that the \(z\)-component is non-zero). This procedure yields a ribbon diagram representing \( \Gamma \).
\end{proof}

In order for ribbon diagrams to faithfully represent ribbon graphs up to equivalence, we must introduce the following diagrammatic moves. These moves allow one to move between two distinct ribbon diagrams of the same ribbon graph i.e.\ between two distinct planar representations.

\begin{definition}[Ribbon  moves]\label{Def:ribbonmoves}
The following moves on ribbon diagrams are known as the \emph{ribbon  moves}:
\begin{center}
\includegraphics[scale=0.7]{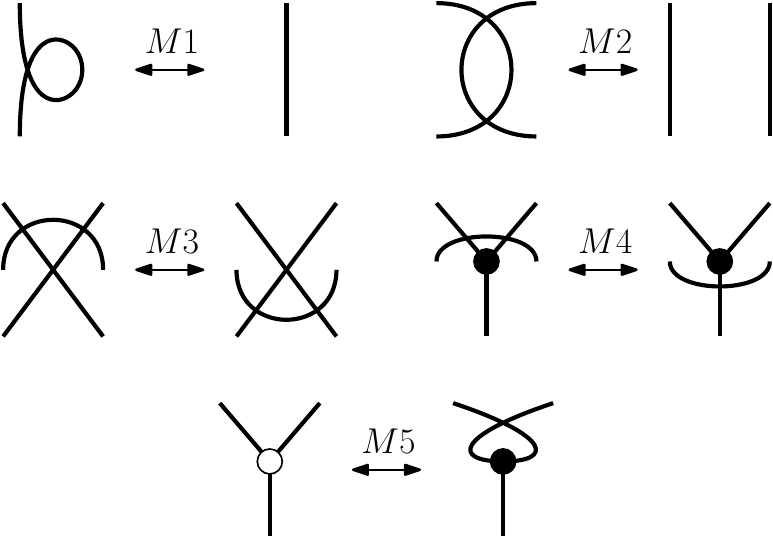}
\end{center}
where the vertices in \( M4 \) are either both solid, or both hollow.
\end{definition}

\begin{remark}
	Throughout this paper we could work with only solid vertices, using the move \( M5 \) to convert every hollow vertex to solid. However, using hollow and solid vertices has the benefit of greatly simplifying ribbon diagrams. For example, the leftmost diagram of \Cref{Fig:matcheddiagrams} and decorated Franklin graph of \Cref{Ex:franklin} become unwieldy if the hollow vertices are converted.
\end{remark}

A useful consequence of the ribbon moves allows edges to be transformed arbitrarily, provided their endpoints remain fixed.
\begin{lemma}[Detour move]\label{Lem:detour}
The ribbon moves induce the following move on ribbon diagrams, known as the \emph{detour move}:
\begin{center}
\includegraphics[scale=0.65]{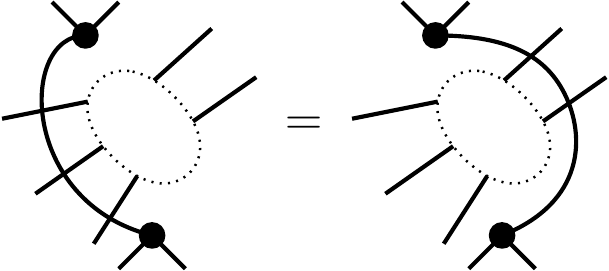}
\end{center}
\end{lemma}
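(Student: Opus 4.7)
The plan is to realize the detour move as a finite sequence of the ribbon moves M1--M5. Given a ribbon diagram $D$, an edge $e$ of $D$, and a planar arc $\alpha$ with the same endpoints as $e$ that is to replace $e$, I would first choose a generic homotopy of $e$ to $\alpha$ through the plane, keeping the endpoints of $e$ fixed. By a standard genericity argument (analogous to the proof that the Reidemeister moves generate equivalence of knot diagrams), this homotopy decomposes as a finite sequence of elementary local events, each of which is either (a) a planar isotopy of a portion of $e$ supported in a disc disjoint from the rest of $D$; (b) the creation or annihilation of a pair of adjacent transverse intersections of $e$ with another edge of $D$; (c) the passage of one transverse intersection of $e$ past another; or (d) the passage of a portion of $e$ past a vertex of $D$.

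Events of type (a) do not change the underlying ribbon diagram, since the implicit cyclic order at each vertex is preserved. Events of types (b) and (c) are precisely Reidemeister-II and Reidemeister-III analogues, and I would realize them directly using the ribbon moves M2 and M3 respectively. Events of type (d) are handled by M4, with a preparatory application of M5 to convert the vertex in question between its hollow and solid forms so that the configuration matches the hypothesis of M4. Any half-twists introduced in the course of these replacements can then be absorbed using M1.

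The main obstacle will be handling the vertex-passing events of type (d), particularly when $e$ is itself one of the three edges incident to the vertex being passed. In that configuration, pushing an arc of $e$ past the vertex implicitly reorganizes the cyclic ordering of the incident edges, and one must carefully coordinate applications of M4 and M5 (together with M1 to reconcile twists) to realize the local change while preserving the underlying ribbon graph. Once this case is settled, the detour move follows immediately by concatenating the ribbon-move realizations of the finitely many elementary events produced by the generic homotopy.
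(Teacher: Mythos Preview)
The paper states this lemma without proof, so there is nothing to compare against directly; your approach is the standard one and is essentially correct. A generic planar homotopy of the arc (rel endpoints) decomposes into finitely many local events, each realized by one of the ribbon moves.

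There is one genuine omission in your enumeration. You list only events where $e$ crosses \emph{another} edge (your type (b)), but a generic homotopy of $e$ will also create and destroy \emph{self}-intersections of $e$---the Reidemeister~I--type event. This is exactly what $M1$ handles: in the paper's dictionary (proof of Proposition~\ref{Prop:welldefined1}) the ribbon moves $M1$, $M2$, $M3$ correspond to the purely virtual moves $V1$, $V2$, $V3$, so $M1$ removes a single kink in an edge, not a ``half-twist''. Your sentence ``Any half-twists introduced in the course of these replacements can then be absorbed using $M1$'' therefore misidentifies the role of $M1$ and leaves the self-intersection event unaccounted for. Add a fifth event type (creation/annihilation of a self-intersection of $e$) and handle it with $M1$; then there is nothing left to ``absorb''.

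Your anticipated obstacle---passing a portion of $e$ past one of its own endpoint vertices---is largely a non-issue. The detour move reroutes an arc whose endpoints lie in the interior of edges (or, equivalently, one fixes a small neighbourhood of each endpoint vertex throughout the homotopy), so the cyclic order at the endpoints is preserved by construction and no reorganisation is needed. The only vertex-passing events that occur are those where a middle portion of $e$ sweeps past a vertex not incident to that portion, and these are handled directly by $M4$ (with a preliminary $M5$ to match the solid/hollow hypothesis, as you say).
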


Introducing the ribbon moves allows us to convert one diagram of a ribbon graph to another.
\begin{theorem}\label{Thm:ribbons}
Two ribbon diagrams represent the equivalent ribbon graphs if and only if they are related by a finite sequence of ribbon moves and planar isotopy.
\end{theorem}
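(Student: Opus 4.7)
The plan is to prove the two directions separately. For the forward implication, I would proceed by a local verification: for each ribbon move $M1$--$M5$ of Definition~\ref{Def:ribbonmoves}, one checks that the surface obtained by gluing discs and bands (via the construction following Definition~\ref{Def:ribbondiagram}) from the diagram on one side of the move is orientation-preserving homeomorphic to the surface obtained from the other side, via a homeomorphism inducing an isomorphism of the underlying graphs. The move $M5$ is essentially a notational identity defining the hollow vertex; $M4$ encodes transporting a half-twist past a vertex, which explains the parity restriction; the remaining moves are small planar reconfigurations of the ribbon diagram that leave the glued surface unchanged up to homeomorphism. This direction is largely bookkeeping on orientations of the bands.

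For the reverse direction, I would adapt the standard proof of the Reidemeister theorem. Given equivalent ribbon graphs $\Gamma_1$ and $\Gamma_2$ represented by diagrams $D_1$ and $D_2$, use Proposition~\ref{Prop:ribbonrep} to regard $D_i$ as coming from an embedding of $F_{\Gamma_i}$ in $\mathbb{R}^3$ projected to the $z=0$ plane. The equivalence supplies an orientation-preserving homeomorphism $\widetilde{F}_{\Gamma_1} \to \widetilde{F}_{\Gamma_2}$ taking the spines isomorphically, and this extends to an ambient isotopy $H_t$ of $\mathbb{R}^3$ moving the first embedding to the second. After a small generic perturbation, the projections $\pi \circ H_t$ are regular ribbon diagrams except at finitely many exceptional times, each of which contributes a single codimension-one singular event.

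The key step is to enumerate these singular events and match each with a ribbon move. For the spine of the embedded ribbon, the events are the familiar ones for trivalent graphs in $3$-space: tangencies between two edges, triple points, and an edge passing through a vertex; these are realised by $M1$--$M3$ (or absorbed into planar isotopy) and the detour move of Lemma~\ref{Lem:detour}. The novel events come from the ribbon structure: a vertex whose normal $z$-component passes through zero, which swaps solid and hollow decorations and corresponds to $M5$; and a half-twist sliding onto or off a vertex along an incident band, which corresponds to $M4$.

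The hard part will be the case analysis around vertices, in particular ensuring the moves suffice to transport a half-twist past any vertex regardless of the local decorations, and that the parity restriction in $M4$ does not obstruct this (one must interleave $M5$ applications to reconcile parity). Once each singular event in the generic isotopy is realised by a finite sequence of $M1$--$M5$ plus planar isotopy, concatenating these sequences along the interval $t \in [0,1]$ yields the required finite sequence of ribbon moves carrying $D_1$ to $D_2$.
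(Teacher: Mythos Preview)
Your forward implication is fine and matches the paper's treatment (the paper simply declares it clear). For the converse, however, your approach diverges from the paper's and contains a genuine gap.

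The paper argues combinatorially. It observes that a ribbon graph is completely determined by its vertex set, incidence matrix, cyclic ordering of edges at each vertex, and an orientation of the resulting surface; a ribbon diagram carries exactly this data together with a specific planar drawing of the edges and solid/hollow vertex labels. The detour move (Lemma~\ref{Lem:detour}) shows that the particular planar drawing is immaterial, and $M5$ accounts for the two ways of recording a cyclic order (viewing the surface from the outward or the inward normal side). Thus two diagrams encoding the same combinatorial data are related by ribbon moves, and the converse follows.

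Your approach is geometric, via generic isotopies in $\mathbb{R}^3$ in the style of the Reidemeister theorem. The critical gap is the sentence ``this extends to an ambient isotopy $H_t$ of $\mathbb{R}^3$ moving the first embedding to the second.'' Equivalence of ribbon graphs is an \emph{intrinsic} condition: a homeomorphism of the capped-off surfaces carrying one spine to the other. It says nothing about the \emph{extrinsic} embeddings in $\mathbb{R}^3$ that you chose via Proposition~\ref{Prop:ribbonrep}. Two embeddings of the same abstract ribbon surface may be knotted or linked in incompatible ways in $\mathbb{R}^3$, and then no ambient isotopy exists. Your enumeration of singular events therefore never gets off the ground in such cases. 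This could be repaired---for instance by noting that ribbon diagrams do not record over/under information at edge intersections, so one may pass strands through one another in $\mathbb{R}^3$ without altering the diagram and thereby unknot the embedding first---but that additional step is substantial and is exactly where the analogy with the classical Reidemeister theorem breaks down. The paper sidesteps the issue entirely by never invoking an embedding in $\mathbb{R}^3$ for the converse direction.
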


\begin{proof}
It is clear from the construction of the ribbon moves that if two ribbon diagrams are related by such a sequence, then they represent the equivalent ribbon graphs.

To deduce the converse, recall that a ribbon graph is equivalent to the following data: a vertex set, an incidence matrix, a cyclic ordering of the edges at each vertex, and an orientation of the surface defined by the cyclic ordering\footnote{This ordering data is also known as a {\em rotational system}.}.

On the other hand, a ribbon diagram is equivalent to the following data: a vertex set, a choice of edges in the plane realizing an incidence matrix, a cyclic ordering of the edges at each vertex, and vertex decorations (either solid or hollow).

The detour move of \Cref{Lem:detour} eliminates the dependence on the choice of realization of the incidence matrix. That is, any choice of path may be transformed into another while keeping the endpoints fixed. It follows that the equivalence class of ribbon diagrams up to the ribbon moves depends only on the incidence matrix.

Further, to explicitly write down the cyclic order of edges at a vertex of a ribbon graph one must choose to view the surface from the outward normal side, or the inward normal side of the surface. Changing the side reverses the ordering. This is captured exactly by the move \( M5 \).
\end{proof}
As a consequence of \Cref{Thm:ribbons}, we may equivalently define a ribbon graph as an equivalence class of ribbon diagrams, up to the ribbon moves.

\subsection{Graphenes}\label{Subsec:graphenes}
To define a graphene we require \emph{directed perfect matchings} of a trivalent graphs, in addition to the ribbon structure.

\begin{definition}[Perfect matching]\label{Def:pm}
A \emph{perfect matching} of a graph is a subset of the edges of the graph, such that each vertex is incident to exactly one edge in the subset. The subset is known as the \emph{matching set}, and edges in the matching set are known as \emph{matched edges}.
\end{definition}
The term {\em matching} is used in graph theory for any subset of the edge set of graph; the term \emph{perfect} here refers to the fact that every vertex is incident to exactly one matched edge.

\begin{definition}[Directed perfect matching]\label{Def:dpm}
A \emph{directed perfect matching} is a perfect matching with two extra decorations on each matched edge: a {\em direction} and a {\em sign}. A {\em direction} distinguishes one end of the matched edges by placing a dot, as in \raisebox{-6pt}{\includegraphics{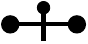}}, and similarly for other vertex decorations. A positive edge is denoted by a solid line and a negative edge is denoted by a dotted line.
\end{definition}

The direction of a perfect matching breaks the symmetry of the matched edges under \( 180 \) degree rotation.

\begin{remark} One could use an arrowhead \raisebox{-2pt}{\includegraphics{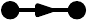}} instead of \raisebox{-6pt}{\includegraphics{matchededge.pdf}} to denote direction.  However, such arrowheads are easily confused with orientations of edges.
\end{remark}

We shall study equivalence classes of ribbon graphs with directed perfect matchings of their underlying graphs. Henceforth we shall refer to these objects as \emph{matched graphs}, and represent them using the following diagrams.
\begin{definition}[Matched diagram]\label{Def:md}
A \emph{matched diagram} is a ribbon diagram with a directed perfect matching of the graph. That is, it is a plane drawing of a graph with a directed perfect matching and the vertex decorations of \Cref{Def:ribbondiagram}. A positive matched edge is denoted by \raisebox{-6pt}{\includegraphics{matchededge.pdf}} and a negative matched edge by \raisebox{-6pt}{\includegraphics{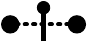}}.
\end{definition}

\begin{figure}
\includegraphics[scale=0.65]{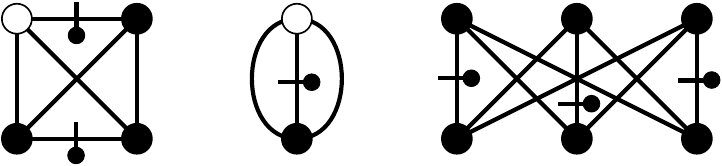}
\caption{Matched diagrams.}
\label{Fig:matcheddiagrams}
\end{figure}
Examples of matched diagrams are given in \Cref{Fig:matcheddiagrams}. We now introduce a collection of moves on matched diagrams. The direct correspondence between these moves and the Reidemeister moves of knot theory (as depicted in \Cref{Fig:crms}) shall be made explicit in \Cref{Sec:kmap}.

\begin{definition}[Graphene moves]\label{Def:graphenemoves}
The moves on matched diagrams depicted in \Cref{Fig:non-ribbonmoves} are known as the \emph{non-ribbon moves}. Taking the union of the ribbon and non-ribbon moves yields the \emph{graphene moves}.
\end{definition}

\begin{figure}
	\includegraphics[scale=0.65]{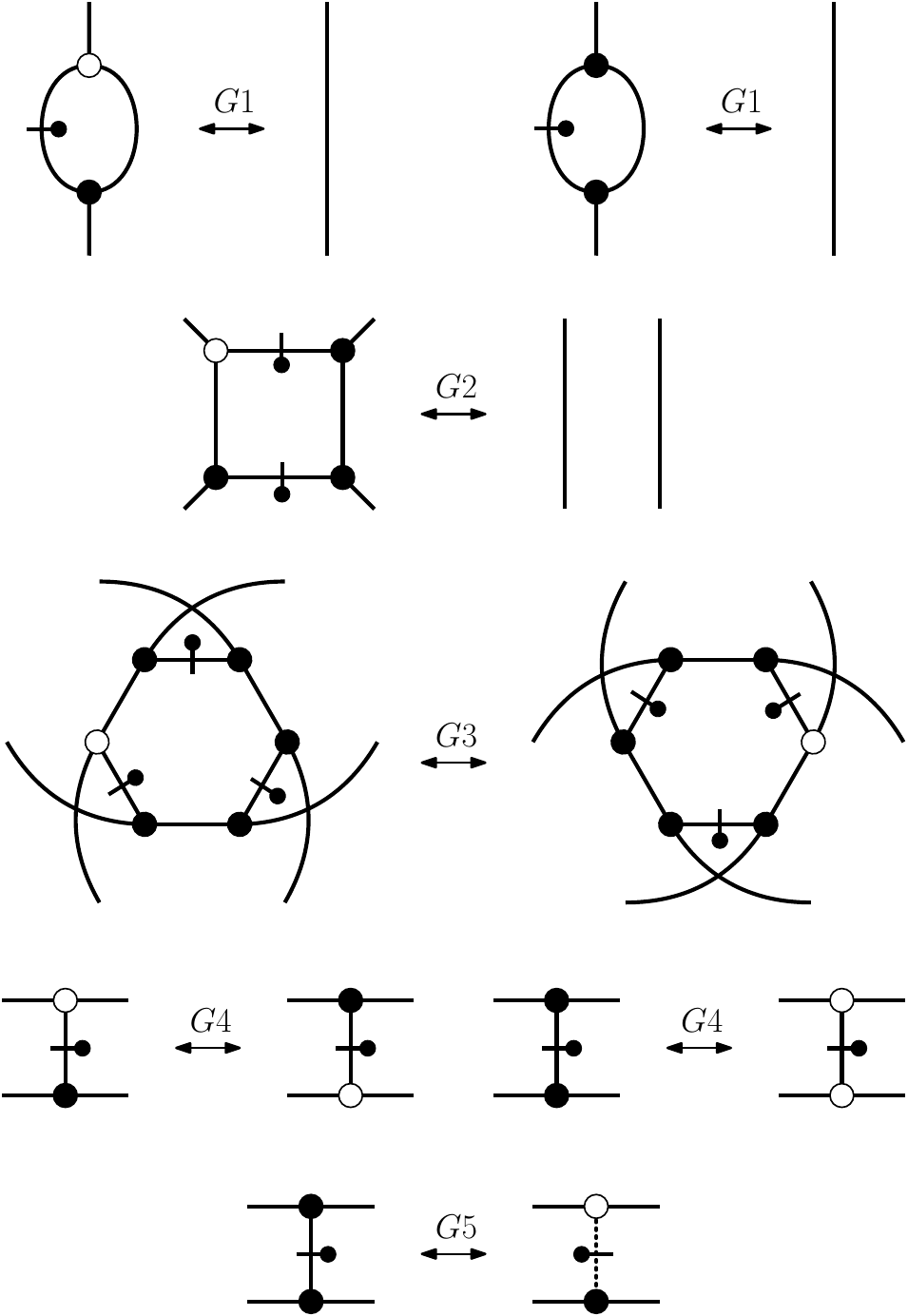}
	\caption{The non-ribbon moves.}
	\label{Fig:non-ribbonmoves}
\end{figure}

That the moves given in \Cref{Fig:non-ribbonmoves} are sufficient for our purposes follows from a result of Polyak \cite{Polyak2010} and the work of \Cref{Sec:kmap}.

The reader who is familiar with the Reidemeister moves is encouraged to apply the replacement depicted in \Cref{Eq:kmap} to (both sides of) the rightmost \( G1 \) move, and to satisfy themselves that the first Reidemeister move is obtained.

As observed in \Cref{Thm:ribbons}, the ribbon moves preserve the ribbon graph corresponding to a matched diagram. In contrast, the non-ribbon moves may alter the ribbon graph. The moves \( G1 \), \( G2 \), and \( G3 \) provide bigon, square, and hexagon relations, respectively, all of which change the underlying graph of a ribbon graph. The \( G4 \) move and the $G5$ move preserve the underlying graph, but alter the associated surface.

We are now in a position to define a graphene.
\begin{definition}[Graphene]\label{Def:graphene}
A \emph{graphene} is an equivalence class of matched diagrams, up to the graphene moves. Two matched diagrams represent the same graphene if they are related by a finite sequence of graphene moves and planar isotopies.
\end{definition}
If a matched diagram \( D \) represents the graphene \( \mathcal{G} \) we say that \emph{\( D \) is a diagram of \( \mathcal{G} \)}; two diagrams which represent the same graphene are said to be \emph{equivalent}.

In light of \Cref{Thm:ribbons}, we may equivalently define graphenes as equivalence classes of ribbon graphs with directed perfect matchings, up to the non-ribbon moves. We say that a ribbon graph \( \Gamma \) \emph{represents} a graphene \( \mathcal{G} \) if there is a matched diagram of \( \mathcal{G} \) that represents \( \Gamma \) as a ribbon diagram (that is, forgetting the directed perfect matching). For reasons of exposition it is useful to keep both definitions in mind.

The graphene moves allow matched diagrams to be manipulated in many ways. Given a diagram of a graphene we may use the ribbon move \( M5 \) to produce an equivalent diagram whose vertices are all solid, or all hollow, at the cost of possibly introducing intersections between edges.

The move \( G4 \) has the following action on the surface associated to the ribbon graph defined by a matched diagram: remove a portion of the surface homeomorphic to that on the left of \Cref{Fig:perfectmatchingmove}, and replace it with a portion homeomorphic to that on the right. (There is also a version in which the central band has a half-twist.) This move may be motivated as follows.

\begin{figure}
\includegraphics[scale=0.65]{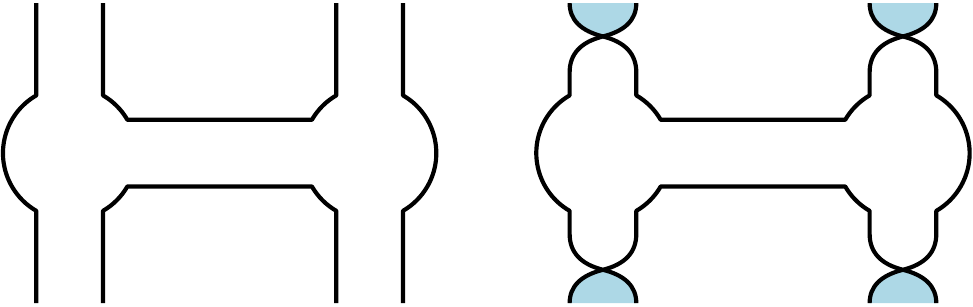}
\caption{The affect of \( G4 \)  on a ribbon surface.}
\label{Fig:perfectmatchingmove}
\end{figure}

Two distinct ribbon graphs may possess the same underlying graph. A graphene is an equivalence class of ribbon graphs, and including \( G4 \) can cause distinct ribbon graphs to represent the same graphene. As one of our main motivations is to apply the graphene construction to the study of trivalent graphs, reducing the dependence of the construction on the ribbon graph is desirable.	

Another datum forming a graphene is a choice of orientation of the associated surface of a ribbon graph. Including the move \( G4 \)  eliminates the dependence on this choice.

\begin{lemma}\label{Lem:orientationflip}
Let \( \Gamma_1 \) be a ribbon graph, and \( \Gamma_2 \) the ribbon graph obtained by reversing orientation of \( F_{\Gamma_1} \). Then \( \Gamma_1 \) and \( \Gamma_2 \) represent the same graphene.
\end{lemma}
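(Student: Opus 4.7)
The plan is to reduce the claimed equivalence to a purely diagrammatic manipulation, exploiting the observation that reversing the orientation of the surface \( F_{\Gamma} \) is reflected in any ribbon diagram of \( \Gamma \) by swapping all vertex decorations. Concretely, I would first apply the construction in the proof of \Cref{Prop:ribbonrep} to produce a ribbon diagram \( D_1 \) representing \( \Gamma_1 \). In that construction the solid/hollow decoration at each vertex is determined by the sign of the \( z \)-component of the oriented normal to \( F_{\Gamma} \). Reversing the orientation of \( F_{\Gamma_1} \) negates every such normal, so a ribbon diagram \( D_2 \) representing \( \Gamma_2 \) is obtained from \( D_1 \) by exchanging every solid vertex for a hollow one (and vice versa), while leaving the underlying plane drawing and the directed perfect matching unchanged; the matching set, the direction dots, and the edge signs are intrinsic to the graph and do not depend on a choice of surface orientation.

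It therefore suffices to show that \( D_1 \) and \( D_2 \), which differ only in their vertex decorations, represent the same graphene. To achieve this I would apply the ribbon move \( M5 \) at every vertex of \( D_1 \). Since \( M5 \) is precisely the relation asserting that a hollow vertex is a solid vertex with a twist (and vice versa), performing it at each vertex produces a ribbon diagram \( D_1' \) whose decorations coincide with those of \( D_2 \) but which carries extra half-twists on certain bands. Because \( M5 \) is a ribbon move, \( D_1' \) still represents the ribbon graph \( \Gamma_1 \), whereas \( D_2 \) represents \( \Gamma_2 \); the two diagrams differ exactly in the presence of these half-twists.

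Finally, I would cancel the extra half-twists using the non-ribbon moves. The move \( G4 \) is designed precisely to add or remove a half-twist on a matched edge, as described in the discussion of \Cref{Fig:perfectmatchingmove}, and the detour move of \Cref{Lem:detour} together with the other ribbon moves allows a half-twist to be transported along a band. I would therefore route each twist introduced by \( M5 \) onto an adjacent matched edge and clear it with a single \( G4 \) move, or combine pairs of half-twists on the same band into a full twist which can be absorbed by rotating at a vertex. The main obstacle is the bookkeeping in this final step: one must verify that the collection of twists produced by applying \( M5 \) at every vertex can be cleared completely, with no residue. Since the underlying graph is trivalent and each vertex is incident to exactly one matched edge, every twist has a canonical matched edge into which it can be absorbed, so the verification reduces to a local check at each vertex followed by a single invocation of \( G4 \).
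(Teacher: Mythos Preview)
Your first paragraph matches the paper exactly: both arguments begin by observing that orientation reversal is realized diagrammatically by swapping every solid vertex for a hollow one and vice versa, producing diagrams \(D_1\) and \(D_2\) that differ only in their vertex decorations.

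The divergence is in how you pass from \(D_1\) to \(D_2\). The paper does this in a single stroke: the move \(G4\), at the level of matched diagrams, swaps the solid/hollow decorations at the two endpoints of a matched edge. (The half-twist description you quote from \Cref{Fig:perfectmatchingmove} is the effect of \(G4\) on the associated surface, not its diagrammatic form.) Since every vertex is incident to exactly one matched edge, applying \(G4\) once at each matched edge flips every decoration and takes \(D_1\) directly to \(D_2\).

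Your route through \(M5\) is a detour, and the final clearing step is not correct as written. After applying \(M5\) at every vertex you obtain \(D_1'\) with the decorations of \(D_2\) but with extra crossings; since \(D_1'\) still represents \(\Gamma_1\) and \(D_2\) represents \(\Gamma_2\), and these are genuinely distinct ribbon graphs, no sequence of ribbon moves alone will remove those crossings. You then propose to absorb each twist into a matched edge and ``clear it with a single \(G4\) move'', but at the diagram level \(G4\) does not simply delete a twist while leaving decorations fixed: it toggles the decorations at the endpoints of that matched edge. So each \(G4\) you invoke will undo part of the decoration swap you achieved with \(M5\). One can untangle this (the net effect of your \(M5\)'s followed by your \(G4\)'s, carefully counted, is indeed the global swap), but the bookkeeping you defer is precisely the content of the lemma, and it collapses to the paper's one-line argument once you recognise what \(G4\) actually does to a matched diagram.
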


\begin{proof}
Recall that solid (hollow) vertices represent the outward (inward) normal side of \( F_{\Gamma_1} \). Therefore reversing the orientation transforms solid vertices to hollow, and vice versa. As \( \Gamma_1 \) and \( \Gamma_2 \) differ only in the orientation of the surface \( F_{\Gamma_1} \), there exists a ribbon diagram, \( D_1 \), of \( \Gamma_1 \), and a ribbon diagram, \( D_2 \), of \( \Gamma_2 \), such that \( D_2\) is obtained from \( D_1 \) by converting hollow vertices to solid, and vice versa. Such a transformation is achievable via \( G4 \) , and it follows that \( D_1 \) and \( D_2 \) represent the same graphene, so that \( \Gamma_1 \) and \( \Gamma_2 \) do also.
\end{proof}

Other useful consequences of the graphene moves are as follows.
\begin{lemma}\label{Lem:graphenezmove}
We have
\begin{equation*}
	\raisebox{-21pt}{\includegraphics[scale=0.75]{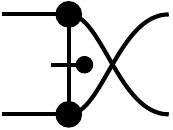}}\quad = \quad\raisebox{-21pt}{\includegraphics[scale=0.75]{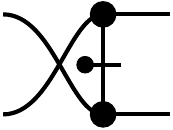}}
\end{equation*}
The above equation holds for any vertex decoration and direction of the matched edges.
\end{lemma}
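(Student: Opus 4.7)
The plan is to establish the equivalence by exhibiting an explicit sequence of graphene moves carrying one diagram onto the other, proceeding through the intermediate configurations suggested by the file naming (\texttt{gz1}, \texttt{gz2}, \ldots, \texttt{gz8}). Each intermediate step is a single application of either a ribbon move, the detour move of \Cref{Lem:detour}, or one of the non-ribbon moves \( G1 \)--\( G5 \) from \Cref{Def:graphenemoves}. Since the ribbon and detour moves preserve the underlying ribbon graph, the interesting content of the chain will lie in the applications of the non-ribbon moves, which are the only moves capable of changing the local combinatorial type of the diagram around the matched edges shown.

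First, I would use the detour move to reroute the arcs emerging from the matched edge so that a small bigon (or square) appears locally, into which a \( G1 \) (or \( G2 \)) move can be inserted or absorbed. Next, I would shuffle the vertex decorations using \( M5 \) and \( G4 \) so that the solid/hollow pattern matches the hypothesis of the next non-ribbon move to be applied; crucially, \Cref{Lem:orientationflip} guarantees that globally flipping decorations costs nothing. I would then perform the key structural move, which I expect to be a \( G3 \) hexagon relation, bridging the two sides of the claimed equality by pushing a matched edge through a trivalent vertex. Finally I would undo the auxiliary detours and decoration swaps to land on the target diagram.

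The last sentence of the statement — that the equation holds for arbitrary vertex decorations and directions of the matched edges — would then be handled by a short parity argument: the moves \( M5 \) and \( G4 \) (together with \Cref{Lem:orientationflip}) let one reduce any combination of solid/hollow decorations to a fixed reference choice, and the direction decoration on a matched edge can be tracked coherently through each of the moves \( G1 \)--\( G5 \) (which are stated to be compatible with arbitrary directions). Hence the single sequence constructed above covers every decoration pattern up to a preliminary normalization.

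The main obstacle will be purely bookkeeping: finding a sequence of moves that is short enough to be convincing while tracking all the decorations, directions, and surface orientations faithfully at each stage. In particular, care is needed to ensure that the orientation of the ribbon surface remains consistent after each \( G4 \) application, and that the direction dots on matched edges are transported correctly across the \( G3 \) move — a naive application can silently reverse a direction, which would invalidate the equality. I would therefore present the chain as an explicit figure with the decorations and directions drawn at every intermediate step, verifying the non-ribbon moves one by one against \Cref{Def:graphenemoves}.
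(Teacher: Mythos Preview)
Your approach is the one the paper takes: it does not give a proof at all, stating only that \Cref{Lem:graphenezmove,Lem:rotate} ``are exercises in applying the graphene moves.'' So exhibiting an explicit chain of graphene moves is exactly what is intended, and your normalization argument for arbitrary decorations and directions via \( M5 \), \( G4 \), and \( G5 \) is the right way to handle the final sentence.

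One cautionary remark on the speculative details: your guess that the key structural step is a \( G3 \) hexagon relation is not well-motivated, and your invocation of \Cref{Lem:orientationflip} is misplaced --- that lemma is a global statement about reversing the orientation of the entire ribbon surface, not a local tool for adjusting decorations inside a tangle. For local decoration changes you should work directly with \( M5 \) and \( G4 \) on the vertices in question. Since the paper supplies no further detail, the only way to complete your plan is to actually draw the intermediate diagrams and check each step against \Cref{Def:ribbonmoves,Def:graphenemoves}; the move sequence you end up with may well look rather different from your outline.
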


\begin{lemma}\label{Lem:rotate}
We have
\begin{equation*}
	\raisebox{-21pt}{\includegraphics[scale=0.75]{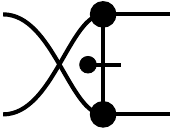}}\quad = \quad\raisebox{-21pt}{\includegraphics[scale=0.75]{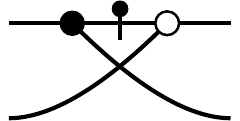}}\quad = \quad\raisebox{-21pt}{\includegraphics[scale=0.75]{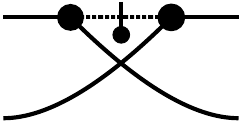}} 
\end{equation*}
\end{lemma}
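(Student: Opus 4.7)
The plan is to verify each of the two equalities by producing an explicit sequence of graphene moves connecting the two sides. Because Lemma \ref{Lem:graphenezmove} has already supplied a convenient ``z-move'' identity, my strategy would be to first reduce \textbf{ro1} to \textbf{ro5} directly using a combination of the detour move (Lemma \ref{Lem:detour}), one or two applications of the digon/square relations $G1$ and $G2$, and a hexagon move $G3$ to translate the matched edge around the local picture. The detour move handles the issue that the endpoints need not be drawn in the same planar positions, so all combinatorial content is concentrated in the use of the non-ribbon moves $G1$--$G3$.

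For the second equality \textbf{ro5} $=$ \textbf{ro6}, I would exploit the symmetry of the picture: applying essentially the same sequence of moves (or its mirror image) again gives a further rotation of the local configuration, and combining with Lemma \ref{Lem:graphenezmove} completes the identification. Alternatively, one can view Lemma \ref{Lem:rotate} as a $120^{\circ}$-cyclic rotation symmetry about the central vertex: in that case it suffices to prove a single step and then cycle, reducing the proof to one calculation rather than two.

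The main obstacle will be bookkeeping of the auxiliary decorations introduced in \Cref{Def:dpm,Def:ribbondiagram}. Each application of $G3$ or $G2$ permutes the matched edges and potentially swaps which end of the matched edge carries the direction dot, and each application of $G4$ or $M5$ can convert solid vertices to hollow ones and vice versa. Ensuring that, at the end of the sequence of moves, the direction dot, the sign (solid vs.\ dotted line), and the solid/hollow vertex decorations all line up with the target diagram requires careful tracking through each intermediate frame. Once the right sequence of moves is fixed, the equalities themselves follow immediately by inspection, so this decoration bookkeeping is the only non-trivial ingredient.
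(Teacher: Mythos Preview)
Your proposal is correct and aligned with the paper: the paper does not give a detailed argument but simply states that the proofs of Lemmas~\ref{Lem:graphenezmove} and~\ref{Lem:rotate} ``are exercises in applying the graphene moves,'' which is exactly the approach you outline. Your sketch is more explicit than the paper's one-line remark, but the underlying method---chain together a finite sequence of ribbon and non-ribbon moves and track the decorations---is the same.
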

The proofs of \Cref{Lem:graphenezmove,Lem:rotate} are exercises in applying the graphene moves.

\section{The correspondence between graphenes and virtual links}\label{Sec:kmap}
In this section we define a functor \( \K \) from the category of graphenes to that of virtual links, and demonstrate that it is an isomorphism. In \Cref{Subsec:vkt} we give a brief review of virtual knot theory, before moving on to the definition of \( \K \) itself in \Cref{Subsec:kmap}. We shall use this relationship in \Cref{Sec:invariants,Sec:embeddings} to construct invariants of graphenes.

\subsection{Virtual knot theory}\label{Subsec:vkt}
Classical knot theory is the study of embeddings of disjoint unions of \( S^1 \) in \( S^3 \). Let \( \Sigma_g \) denote a closed orientable surface of genus \( g \); virtual knot theory is the study of embeddings of disjoint unions of \( S^1 \) in \(3\)-manifolds of the form \( \Sigma_g \times [ 0 , 1 ] \), up to certain equivalence. The study of such links was initiated by Kauffman \cite{Kauffman1998}. Unlike links in many other \(3\)-manifolds, virtual links have a diagrammatic theory, akin to that of classical links. As such, working with virtual links is in some cases similar to working with classical links. Indeed, classical knot theory is a proper subset of virtual knot theory, and many constructions in classical knot theory generalize to the virtual domain.

In this section we give a brief overview of virtual knot theory. First we describe the diagrammatic theory of virtual links, before outlining the corresponding topological interpretation. For more thorough introductions to virtual knot theory, see \cite{Kauffman1998,Manturov2012}.

\subsubsection{Virtual links via diagrams}\label{Subsec:diagrams}
We may represent virtual links using diagrams in the plane, with a new crossing decoration.
\begin{definition}\label{Def:virtualdiagram}
	A \emph{virtual link diagram} is a \( 4 \)-valent planar graph, the vertices of which are decorated with either the classical overcrossing and undercrossing decorations, or a new decoration, \raisebox{-3pt}{\includegraphics[scale=0.35]{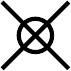}}, known as a \emph{virtual crossing}.
\end{definition}
The virtual crossing is naturally interpreted graph-theoretically: classical crossings are decorated nodes of a \(4\)-valent graph, whereas virtual crossings are simply intersections between its edges.

Examples of virtual link diagrams are given in \Cref{Fig:detourmove2}. There is a set of moves on virtual diagrams that generalizes the Reidemeister moves of classical knot theory.

\begin{definition}\label{Def:vrms}
The \emph{virtual Reidemeister moves} consist of those of classical knot theory, depicted in \Cref{Fig:crms}, together with four new moves involving virtual crossings, depicted in \Cref{Fig:vrms}.
\end{definition}

\begin{figure}
\includegraphics[scale=0.65]{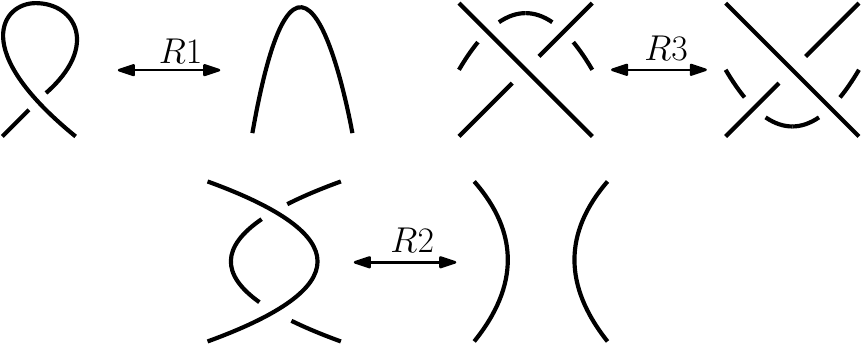}
\caption{The classical Reidemeister moves.}
\label{Fig:crms}
\end{figure}

\begin{figure}
\includegraphics[scale=0.65]{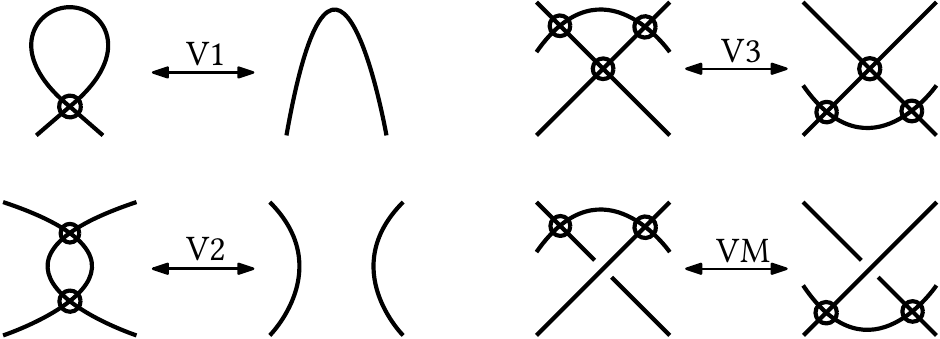}
\caption{The purely virtual Reidemeister moves.}
\label{Fig:vrms}
\end{figure}

\begin{definition}\label{Def:virtuallink}
	A \emph{virtual link} is an equivalence class of virtual link diagrams, up to the virtual Reidemeister moves. Two virtual link diagrams are \emph{equivalent} if they are related by a finite sequence of virtual Reidemeister moves and planar isotopy.
\end{definition}

The virtual Reidemeister moves are designed so that all of the moves involving virtual crossings are instances of one move, called the \emph{detour move}, depicted in \Cref{Fig:detourmove}. In it, a line segment is excised and a new connection made between the resulting endpoints. Any intersections introduced in this manner are virtual crossings. An example of this move in practice is given in \Cref{Fig:detourmove2}. 

\begin{figure}
\includegraphics[scale=0.7]{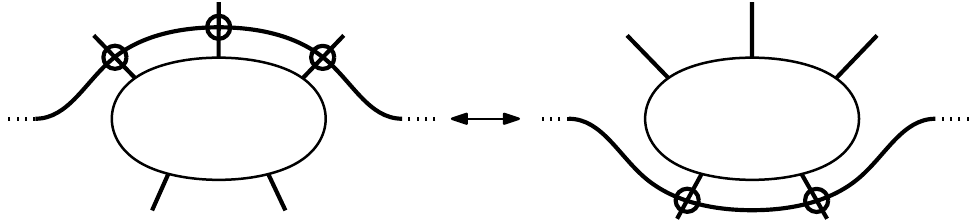}
\caption{The detour move.}
\label{Fig:detourmove}
\end{figure}

\begin{figure}
\includegraphics[scale=0.7]{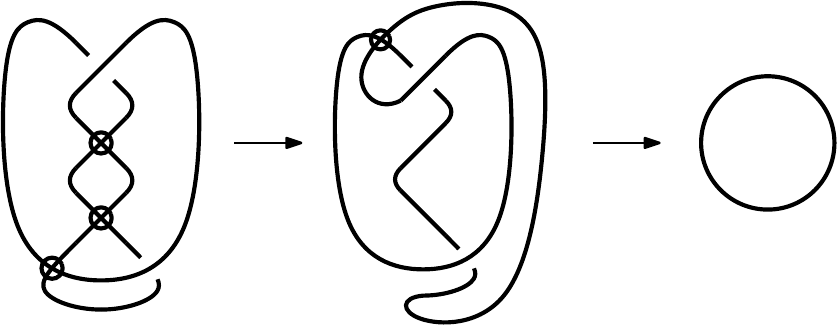}
\caption{The detour move in practice.}
\label{Fig:detourmove2}
\end{figure}

A classical link diagram is simply a virtual link diagram without virtual crossings. One may ask if introducing the virtual Reidemeister moves given in \Cref{Fig:vrms} causes two inequivalent classical diagrams to become equivalent. This was answered in the negative by Goussarov, Polyak, Viro, Kauffman, and Kuperberg.

\begin{theorem}[\cite{Goussarov2000, Kuperberg2002, Kauffman1998}]\label{Thm:GPV}
Let \( D_1 \) and \( D_2 \) be classical link diagrams. Then \( D_1 \) and \( D_2 \) are related by the virtual Reidemeister moves if and only if they are related by the classical Reidemeister moves.
\end{theorem}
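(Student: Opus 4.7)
The forward direction is immediate, since the classical Reidemeister moves are a subset of the virtual Reidemeister moves. All the work lies in the converse: assuming \( D_1 \) and \( D_2 \) are classical but are joined by a sequence of virtual Reidemeister moves through possibly virtual intermediaries, I must exhibit a purely classical Reidemeister sequence between them. The plan is to pass to the topological interpretation of virtual links as link diagrams on surfaces, and then invoke Kuperberg's minimal-genus uniqueness theorem.

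First I would associate to each virtual diagram \( D \) an \emph{abstract link diagram} \( (\Sigma_D, L_D) \), where \( \Sigma_D \) is a closed oriented surface and \( L_D \) is a link diagram drawn on \( \Sigma_D \) (with only classical crossings). This is done by thickening the underlying 4-valent graph of \( D \) to a surface, placing a small disk at each classical crossing with the usual over/under structure, gluing bands along arcs, ignoring virtual crossings entirely (since these are artefacts of embedding into the plane), and then capping off boundary components with disks. Next I would verify, move by move, that the virtual Reidemeister moves of \Cref{Fig:crms,Fig:vrms} correspond exactly to: (i) Reidemeister moves performed on the link diagram on \( \Sigma_D \) within a disk, and (ii) \emph{stable equivalence}, i.e.\ attaching or removing 1-handles to \( \Sigma_D \) in regions disjoint from \( L_D \). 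In particular, the detour move corresponds to an ambient isotopy of the strand on \( \Sigma_D \) inside a regular neighbourhood, possibly routed through a stabilising handle. A classical diagram \( D \) produces \( \Sigma_D = S^2 \).

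With this dictionary in hand the theorem reduces to the following: if \( (S^2, L_1) \) and \( (S^2, L_2) \) are related by Reidemeister moves on surfaces together with stabilisation and destabilisation, then they are already related by Reidemeister moves on \( S^2 \). This is exactly Kuperberg's theorem on the uniqueness of the minimal-genus representative of a virtual link, and this step is the main obstacle of the proof. The essential idea is that in a diagram \( (\Sigma, L) \) the complement \( \Sigma \setminus L \) deformation retracts onto a subsurface, and one can reduce genus by compressing any 1-handle in \( \Sigma \setminus L \) (i.e.\ any handle whose co-core is disjoint from \( L \)); showing that this process terminates at a unique minimum up to Reidemeister moves requires a careful handle-slide / surgery argument, which is the technical heart of \cite{Kuperberg2002}. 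Accepting this, both \( L_1 \) and \( L_2 \) are minimal-genus representatives (having genus \( 0 \)) of the same virtual link, hence are Reidemeister-equivalent on \( S^2 \). Reidemeister equivalence on \( S^2 \) is the same as classical Reidemeister equivalence of planar diagrams (after a choice of point at infinity), completing the argument.
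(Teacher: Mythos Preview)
The paper does not supply its own proof of this theorem: it is stated as a result quoted from the literature (Goussarov--Polyak--Viro, Kuperberg, Kauffman) and then used as background. So there is no ``paper's own proof'' to compare against.

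Your outline is the standard route to this result and is essentially correct. The passage from virtual diagrams to diagrams on surfaces up to stable equivalence is exactly the content of \Cref{Thm:stable} (Kauffman; Carter--Kamada--Saito), and the step that any two stably equivalent genus-\(0\) representatives are already Reidemeister-equivalent on \(S^2\) is Kuperberg's uniqueness of the minimal-genus representative. One small caveat: Kuperberg's statement is about links in thickened surfaces up to isotopy and (de)stabilisation, with uniqueness phrased up to orientation-preserving diffeomorphism of the pair \((\Sigma\times I, L)\); you should make explicit that for \(\Sigma=S^2\) this collapses to the usual notion of ambient isotopy in \(S^3\), hence to classical Reidemeister equivalence. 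With that clarification your sketch is fine.
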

It follows that classical knot theory is a proper subset of virtual knot theory. There exist virtual link diagrams which are not equivalent to a classical link diagram; an example is given in the bottom left of \Cref{Fig:thickenedsurfaces}. For an in-depth treatment of the diagrammatic theory of virtual links see \cite{Kauffman1998}.

\subsubsection{Virtual links as links in thickened surfaces}\label{sec:virt-links-in-thick-surfaces}
Virtual links have an alternative topological realization. As before, let \( \Sigma_g \) denote a closed orientable surface of genus \( g \). A \emph{thickened surface} is a \(3\)-manifold of the form \( \Sigma_g \times [ 0 , 1 ] \). Virtual links may be realized as equivalence classes of embeddings of (disjoint unions of) \( S^1 \) in \( \Sigma_g \times [ 0 , 1 ] \).

\begin{figure}
\includegraphics[scale=0.65]{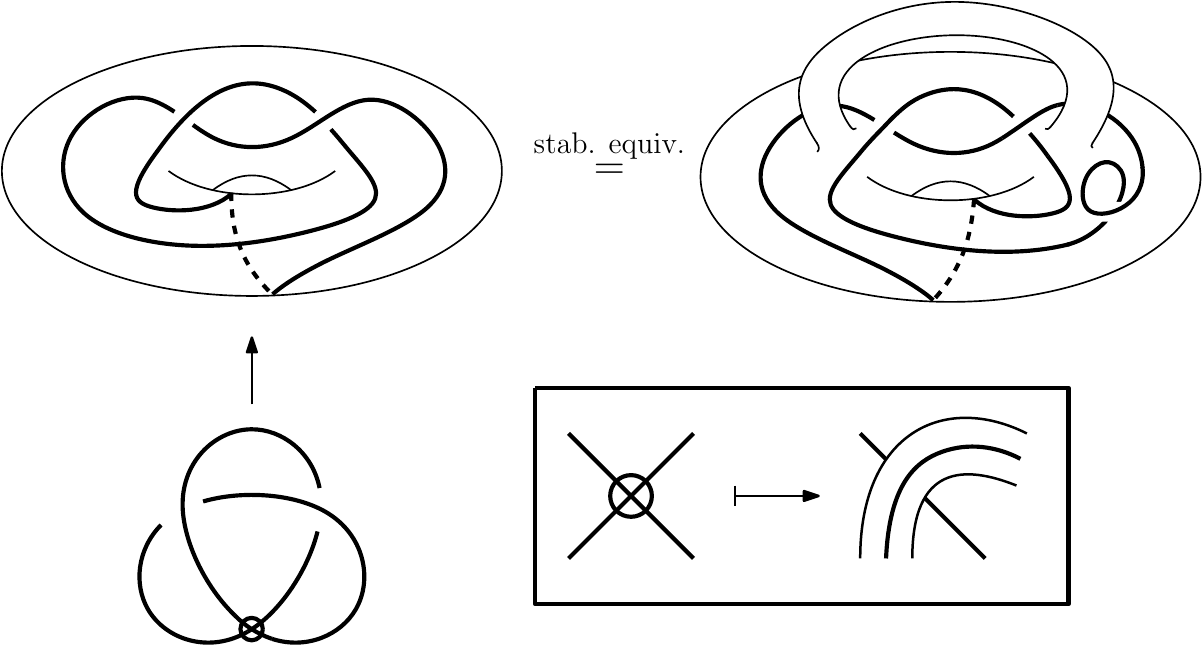}
\caption{Virtual links represent links in thickened surfaces.}
\label{Fig:thickenedsurfaces}
\end{figure}

Specifically, let \( D \) be a virtual link diagram, viewed as a diagram on the surface of a \(2\)-sphere. At every virtual crossing of \( D \) make the replacement depicted in the bottom-right of \Cref{Fig:thickenedsurfaces}. That is, add a handle to the \(2\)-sphere, and place one arc across it. The result is a link diagram on a (closed orientable) surface. Such a diagram represents a link in a thickened surface. An example of this situation is given on the left of \Cref{Fig:thickenedsurfaces}.

Given a virtual link diagram \( D \), denote by \( \overline{D} \) the diagram on a surface resulting from the process described above. Two such diagrams \( \overline{D}_1 \) and \( \overline{D}_2 \) are \emph{stably equivalent} if one can be converted to the other via a finite sequence of classical Reidemeister moves (on \( \Sigma_g \)), self-diffeomorphism of the ambient surface, and addition or removal of empty handles. That is, one is permitted to add or remove handles which do not contain a part of the diagram; for more details see \cite{Carter2000}. An example of a pair of stably equivalent diagrams is given in \Cref{Fig:thickenedsurfaces}.

The virtual Reidemeister moves capture stable equivalence.
\begin{theorem}[\cite{Kauffman1998,Carter2000}.]\label{Thm:stable}
Let \( D_1 \) and \( D_2 \) be virtual link diagrams. Then \( D_1 \) and \( D_2 \) are equivalent if and only if \( \overline{D}_1 \) and \( \overline{D}_2 \) are stably equivalent.
\end{theorem}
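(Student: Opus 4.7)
The plan is to proceed via the intermediate notion of an \emph{abstract link diagram}: a pair $(F, D^\ast)$ where $F$ is a compact oriented surface and $D^\ast$ is a classical link diagram on $F$ such that $F$ is a regular neighborhood of the $4$-valent graph underlying $D^\ast$. A virtual diagram $D$ determines $(F_D, D^\ast)$ canonically via the handle replacement depicted in the bottom right of \Cref{Fig:thickenedsurfaces} at every virtual crossing together with a band at every arc; the surface diagram $\overline{D}$ is obtained from $(F_D, D^\ast)$ by capping off the boundary of $F_D$ with disks. The main idea is to show that virtual Reidemeister equivalence of $D_1$ and $D_2$ corresponds exactly to equivalence of the abstract link diagrams $(F_{D_1}, D^\ast_1)$ and $(F_{D_2}, D^\ast_2)$ up to homeomorphism and empty handle stabilization, and this in turn is exactly stable equivalence of $\overline{D}_1$ and $\overline{D}_2$.

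For the forward direction, it suffices to verify that each generator of virtual Reidemeister equivalence induces a stable equivalence on $\overline{D}$. The classical Reidemeister moves of \Cref{Fig:crms} take place in a disk of $F_D$, so they become classical Reidemeister moves on $\overline{D}$. The remaining moves all involve virtual crossings, and since every such move is an instance of the detour move (\Cref{Fig:detourmove}), it is enough to check the detour move: replacing a strand that threads through one configuration of handles by another that threads through a different configuration amounts to an isotopy of an arc on $\Sigma_g$, together with removal of the now-empty original handles and the addition of new empty handles along the rerouted path.

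For the reverse direction, decompose a stable equivalence between $\overline{D}_1$ and $\overline{D}_2$ into its generating moves: classical Reidemeister moves on the surface, self-diffeomorphisms of $\Sigma_g$, and addition or removal of empty handles. Each intermediate surface diagram is converted back to a virtual diagram by choosing a projection of $\Sigma_g$ into $\mathbb{R}^2$ in which handles project to virtual crossings, and the effect on the virtual diagram is tracked. A classical Reidemeister move on the surface occurs in a disk; after using detour moves to push any intervening virtual crossings out of that disk, it becomes the same classical Reidemeister move on the virtual diagram. Adding or removing an empty handle introduces or removes a cancelling arc of virtual crossings, which is again a detour move.

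The main obstacle is the self-diffeomorphism step. The approach is to express any orientation-preserving self-homeomorphism of $\Sigma_g$ as a composition of Dehn twists along a finite generating set of simple closed curves, and to show that a single Dehn twist along a curve $\gamma$ alters the associated virtual diagram only by virtual Reidemeister moves. This is the delicate point: one must show that sliding a strand that crosses $\gamma$ around the twist produces a new collection of virtual crossings that is related to the old one by detour moves. After this, a final check shows that the virtual diagram produced is independent, up to virtual Reidemeister moves, of the chosen projection of $\Sigma_g$ into the plane, which again reduces to iterated applications of the detour move.
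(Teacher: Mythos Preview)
The paper does not supply a proof of this theorem; it is stated as a result from \cite{Kauffman1998,Carter2000} and invoked without further argument. There is therefore no proof in the paper to compare your proposal against.

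For what it is worth, your outline follows the abstract-link-diagram approach of Kamada--Kamada, which is one of the standard routes to this result. The forward direction is fine. In the reverse direction, your treatment of the self-diffeomorphism step is more elaborate than necessary: an orientation-preserving homeomorphism of $\Sigma_g$ restricts to a homeomorphism of the regular neighbourhood of the diagram (the abstract link diagram), hence leaves the Gauss code unchanged, and two virtual diagrams with the same Gauss code are already related by detour moves alone. The Dehn-twist decomposition and the case analysis of strands crossing the twist curve can be bypassed entirely. The phrase ``choosing a projection of $\Sigma_g$ into $\mathbb{R}^2$ in which handles project to virtual crossings'' is also imprecise; what is actually meant is an immersion of the abstract link surface in the plane, with virtual crossings recording the double points of the immersion.
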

This result demonstrates that the diagrammatic theory described in \Cref{Subsec:diagrams} describes a topological theory of (equivalences classes of) links in thickened surfaces. As mentioned above, there exist virtual link diagrams which are not equivalent to classical link diagrams. Therefore for such links, the process outlined above shall always yield a surface of non-zero genus.

\subsection{The \( \mathbb{K} \) functor}\label{Subsec:kmap}
Let \( D( \mathbb{G}) \) denote the category whose objects are matched diagrams, and in which there is a morphism between two objects if and only if the associated matched diagrams are related by a finite sequence of graphene moves. Denote by \( \mathbb{G} \) the category whose objects are graphenes, and possessing no non-identity morphisms.

Similarly let \( D( \mathbb{L}) \) denote the category whose objects are virtual link diagrams, and in which there is a morphism between two objects if and only if the associated virtual link diagrams are related by a finite sequence of virtual Reidemeister moves. Denote by \( \mathbb{L} \) the category whose objects are virtual links, and possessing no non-identity morphisms.

In this section we establish an isomorphism between \( D(\mathbb{G}) \) and \( D(\mathbb{L}) \), that descends to an isomorphism between \( \mathbb{G} \) and \( \mathbb{L} \). First we define a functor on a restricted class of matched diagrams, before extending it to all such diagrams. We then show that this functor interacts appropriately with the graphene moves and virtual Reidemeister moves, so that it defines a functor from \( \mathbb{G} \) to \( \mathbb{L} \). Finally, we show that this functor possesses an inverse.

\begin{definition}\label{Def:kmap1}
Let \( X  \) be the subcategory of \( D (\mathbb{G}) \) consisting of matched diagrams such that (i) each vertex is solid and (ii) the interior of each matched edge does not intersect itself or any other edge interior. The morphisms of \( X \) are those finite sequences of graphene moves involving only matched diagrams of this form. Define the functor \( \K \) from \( X \) to \( D (\mathbb{L}) \) as follows. Given a matched diagram in \( X \) make the following replacement at matched edges, as dictated by the direction:
\begin{center}
	\includegraphics[scale=0.75]{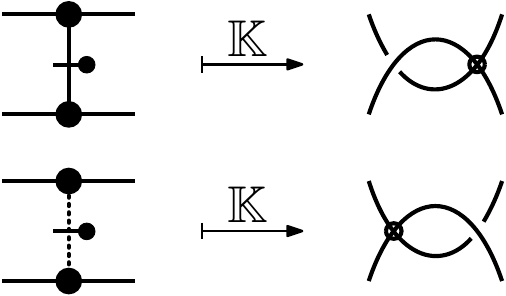}
\end{center}
Intersections between two non-matched edges are replaced with virtual crossings \raisebox{-3pt}{\includegraphics[scale=0.35]{virtualcrossing.pdf}}.

A morphism in \(X\) is a finite sequence of graphene moves. The functor \( \K \) acts on such a morphism by applying the above replacement at every stage.
\end{definition}

We extend this definition to all matched diagrams via the following lemma, that amounts to repeated use of the $M5$ move and the detour move.

\begin{lemma}\label{lemma:extend_K}
The functor $\mathbb{K}$ extends via the ribbon moves to a functor from \( D (\mathbb{G}) \) to \( D (\mathbb{L}) \).
\end{lemma}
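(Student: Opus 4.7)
The plan is to define $\K(D)$ on an arbitrary matched diagram $D$ by using ribbon moves to bring $D$ into the form required by $X$, and then applying the functor of \Cref{Def:kmap1}. Explicitly, I would first apply $M5$ at each hollow vertex of $D$, obtaining an equivalent diagram all of whose vertices are solid (at the cost of possibly introducing new intersections between edges). I would then invoke the detour move of \Cref{Lem:detour} to isotope every matched edge into a small neighbourhood of its endpoints, so that its interior meets no other edge and does not self-intersect. The resulting diagram $D'$ lies in $X$, and we set $\K(D) := \K(D')$. On morphisms, each ribbon move in a given sequence is conjugated by appropriate normalizations of its source and target, and the functor of \Cref{Def:kmap1} is applied to the resulting sequence inside $X$.

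To confirm well-definedness, I would argue that any two normalizations $D_1', D_2' \in X$ of $D$ yield virtual link diagrams related by virtual Reidemeister moves. By \Cref{Thm:ribbons}, such $D_1'$ and $D_2'$ are related by a finite sequence of ribbon moves, so it suffices to analyse the $\K$-image of each ribbon move when applied within or near $X$. The moves $M2$, $M3$, and $M4$ applied inside $X$ leave the local picture of \Cref{Def:kmap1} essentially unchanged, producing diagrams identical up to planar isotopy. The move $M1$ introduces a cancelling pair of strand crossings, which becomes either the purely virtual detour move (on an unmatched edge) or a pair of $\K$-replacements that annihilate via an $R2$ move (on a matched edge). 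Rerouting a non-matched edge via the detour move of \Cref{Lem:detour} translates directly to the purely virtual detour move of \Cref{Fig:detourmove}.

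The hard part will be handling $M5$, since this is the only ribbon move that takes the diagram out of $X$, and it may be applied at a vertex incident to a matched edge, thereby toggling the side on which the direction dot sits. After re-normalizing the post-$M5$ diagram back into $X$, one must verify that the new virtual diagram is virtually Reidemeister equivalent to the original. The key observation I would exploit is that the half-twist introduced by $M5$ can be pushed along an incident edge via further ribbon moves until it either annihilates against a second $M5$ (restoring the original local picture) or reaches a matched edge, where it manifests as a flip of the $\K$-replacement at that edge. One then verifies directly, case by case on the direction and sign of the matched edge, that the two possible replacements are related by a combination of a purely virtual detour move and a local $R2$ application. Completing this case analysis establishes well-definedness of $\K$ on objects and of its action on ribbon-move morphisms, yielding the desired extension from $X$ to all of $D(\mathbb{G})$.
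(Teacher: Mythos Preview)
Your proposal is correct and follows essentially the same approach as the paper: normalize an arbitrary matched diagram into $X$ using ribbon moves (the paper explicitly cites $M5$ for hollow vertices and $M1$--$M4$ to clear matched edges of intersections, whereas you invoke $M5$ together with the detour move of \Cref{Lem:detour}, which amounts to the same thing), then apply the $\K$ of \Cref{Def:kmap1}. The paper's proof is a terse three-sentence sketch that stops at the construction and does not verify well-definedness here; your additional case analysis of the ribbon moves anticipates what the paper defers to \Cref{Prop:welldefined1}, so your extra care is not misplaced, merely front-loaded.
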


\begin{proof}
Given a matched diagram convert all hollow vertices into solid vertices via the ribbon move $M5$. If a matched edge has self-intersections, remove them using the moves \( M1 \) and \( M2 \) of \Cref{Def:ribbonmoves}. If a matched edge has intersections with non-matched edges, use moves \( M3 \) and \( M4 \) to remove them.  Triple points and higher intersections may be arbitrarily separated into double points. The resulting matched diagram will be an element of \( X \), so that the functor \( K \) of \Cref{Def:kmap1} may be applied to it.
\end{proof}

By an abuse of notation we shall henceforth use \( \K \) to denote the extension of the functor defined in \Cref{Def:kmap1} to all matched diagrams obtained via \Cref{lemma:extend_K}.

Although its definition is in terms of diagrams, \( \mathbb{K} \) descends to a functor on graphenes.
\begin{proposition}\label{Prop:welldefined1}
Let \( D_1 \) and \(D_2\) be matched diagrams. If \( D_1 \) is related to \( D_2 \) via graphene moves, then \( \mathbb{K} ( D_1 ) \) is related to \( \mathbb{K} ( D_2 ) \) via virtual Reidemeister moves.
\end{proposition}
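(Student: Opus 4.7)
The approach is a case-by-case verification at the level of the generating moves. Because $\mathbb{K}$ acts on a composition of moves by applying the replacement at each stage, it suffices to show that for each single graphene move the two sides map under $\mathbb{K}$ to virtual link diagrams related by a finite sequence of virtual Reidemeister moves (and planar isotopy). This splits into two families: the ribbon moves $M1$--$M5$ of \Cref{Def:ribbonmoves}, and the non-ribbon moves $G1$--$G5$ of \Cref{Fig:non-ribbonmoves}.

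For the ribbon moves I first need to verify that the extension in \Cref{lemma:extend_K} is itself well-defined up to virtual Reidemeister moves: any two ways of reducing a given matched diagram to an element of the subcategory $X$ of \Cref{Def:kmap1} differ by ribbon moves confined to small neighbourhoods of matched edges and hollow vertices, and on the virtual-link side these become detour moves, which are compositions of the purely virtual Reidemeister moves of \Cref{Fig:vrms}. Granting this, a ribbon move applied to a matched diagram is mapped by $\mathbb{K}$ to a local rearrangement of arcs and virtual crossings that is itself a detour move: $M1$ and $M2$ eliminate self-crossings of an arc, $M3$ and $M4$ slide an arc past a vertex or across a matched edge, and $M5$ trades a twist for a change of vertex decoration, all of which are absorbed into detours.

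For the non-ribbon moves the replacement of \Cref{Eq:kmap} produces, by direct diagrammatic computation, the classical Reidemeister moves: $G1$ becomes $R1$ (with the two sign/direction choices recovering the two variants of $R1$), $G2$ becomes $R2$, and $G3$ becomes $R3$. The moves $G4$ and $G5$, which fix the abstract graph but alter the associated ribbon surface (or its orientation), translate at the diagrammatic level into sequences of $R2$, $R3$, and purely virtual moves, in line with the stable-equivalence picture of \Cref{Thm:stable}. The main obstacle is bookkeeping: each of the five non-ribbon moves must be checked in all cases of vertex decoration, matched-edge direction, and matched-edge sign, and the $G4$ and $G5$ cases in particular require one to track how a change of surface or of orientation on the graphene side manifests locally as virtual Reidemeister moves; the cases where the affected edges are not already in the subcategory $X$ must additionally be preprocessed via the extension before the replacement is applied. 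Once every local case is verified, functoriality of $\mathbb{K}$ propagates the conclusion from the generating moves to arbitrary compositions, establishing the proposition.
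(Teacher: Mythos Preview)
Your approach is essentially the paper's: reduce to single generating moves and verify each one locally. The correspondences $G1\mapsto R1$, $G2\mapsto R2$, $G3\mapsto R3$, and the ribbon moves $M1$--$M3$ mapping to the purely virtual moves $V1$--$V3$ (with $M4$ handled by a combination of $V2$, $V3$, $VM$) are exactly what the paper records.

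The one place you diverge is in the treatment of $G4$ and $G5$. You anticipate needing classical moves $R2$, $R3$ and appeal to the stable-equivalence theorem. The paper instead makes a sharper local observation: applying $M5$ and a single detour to a matched edge with one hollow vertex yields \emph{the same classical crossing} as the all-solid configuration (the identity displayed as \Cref{eq:K-map-one-hollow-vertex}). Since $G4$ merely swaps vertex decorations on a matched edge and $G5$ swaps decoration against sign, both sides of these moves map under $\mathbb{K}$ to diagrams that differ only by purely virtual (detour) moves --- no classical Reidemeister moves are required. Your route via stable equivalence is not wrong, but it imports more machinery than necessary and obscures the simple reason $G4$ and $G5$ cost nothing on the link side.
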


\begin{proof}
Let \( D_1 \) and \( D_2 \) be matched diagrams. We are required to show that if \( D_1 \) is related to \( D_2 \) via a finite sequence of graphene moves and planar isotopies, then \( \mathbb{K} ( D_1 ) \) is related to \( \mathbb{K} ( D_2 ) \) via a finite sequence of virtual Reidemeister moves and planar isotopies. It suffices to verify this for sequences of length one: the proposition then follows by transitivity.

The ribbon moves \( M1 \), \( M2 \), and \( M3 \) of \Cref{Def:ribbonmoves} are sent to the moves \( V1 \), \( V2 \), and \( V3 \) of \Cref{Fig:vrms}; to see this, apply the \(\K \) functor to both sides of each move.

The move \( M4 \) may be replicated on virtual link diagrams by a combination of the moves \( V2 \), \( V3 \) and \( VM \). For example,
\begin{center}
\includegraphics[scale=0.75]{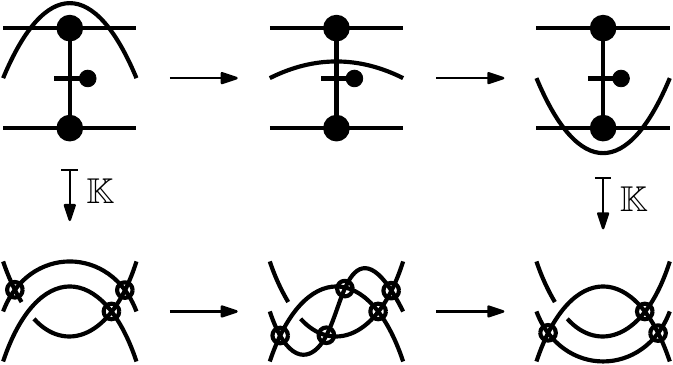}
\end{center}
Other vertex and direction configurations follow similarly.

It remains to verify the proposition for the non-ribbon moves. By using the $M5$ move, applying $\K$, and performing one detour move, we obtain the following
\begin{equation}\label{eq:K-map-one-hollow-vertex}
		\includegraphics[scale=0.75]{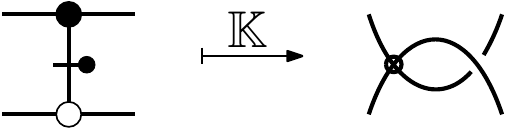}.
\end{equation}
Using this identity one can quickly verify that if \( D_1 \) and \( D_2 \) are matched diagrams related by either form of the \( G4 \) move, then \( \K ( D_1 ) \) and \( \K ( D_1 ) \) represent the same virtual link.

Rotating both diagrams of \Cref{eq:K-map-one-hollow-vertex} by $180^\circ$ and comparing the result to the second map in \Cref{Def:kmap1}, one obtains an equivalent result holds for the $G5$ move.

The moves \( G1 \), \( G2 \) and \( G3 \) in \Cref{Def:graphenemoves} correspond to the classical Reidemeister  moves $R1$, $R2$, and $R3$ of \Cref{Fig:crms} respectively. We verify that \( \K \) is well defined with respect to the second \( G 2 \) move in \Cref{Def:graphenemoves}, leaving the other cases to the reader. We have
\begin{center}
\includegraphics[scale=0.65]{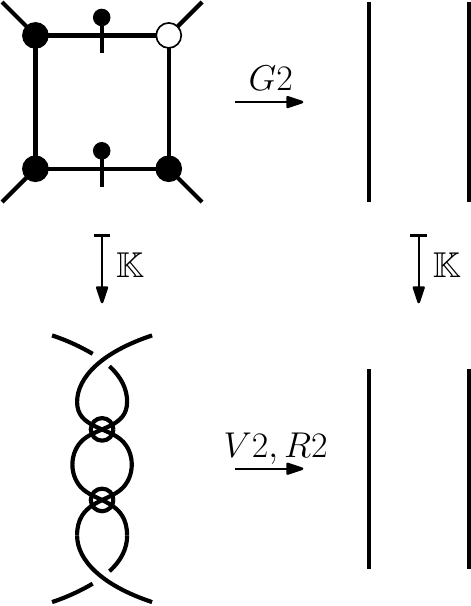}
\end{center}
\end{proof}
In light of \Cref{Prop:welldefined1}, we abuse notation to denote by \( \mathbb{K} \) the functor from \( \mathbb{G} \) to \( \mathbb{L} \) induced by \Cref{Def:kmap1}.

\begin{example}\label{Ex:petersen}
The following graphene, with underlying graph the Petersen graph, is mapped to a \(2\)-component link under \( \mathbb{K} \) 
\begin{center}
\includegraphics[scale=0.5]{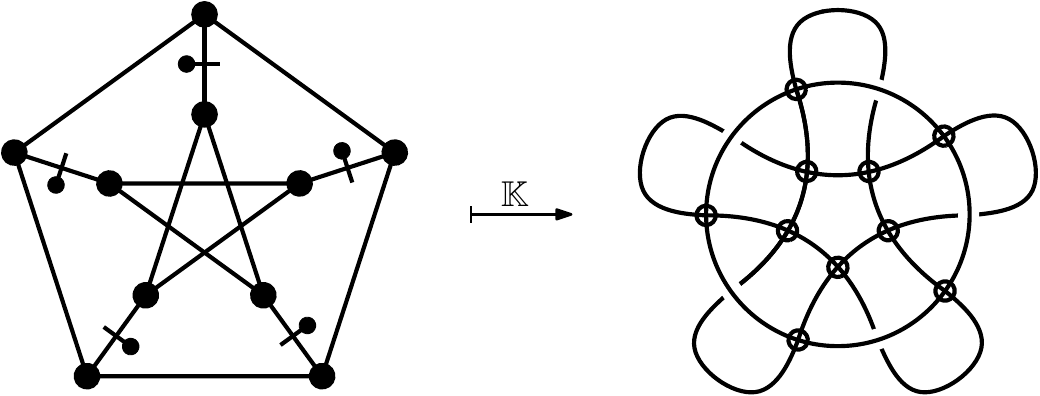}
\end{center}
\end{example}

Our next task is to verify that \( \K \) has an inverse $\Kinv$.
\begin{definition}\label{Def:kmap2}
Define a functor from \( D ( \mathbb{L} ) \) to \( D ( \mathbb{G} ) \) as follows. Given a virtual link diagram, make the following replacement at classical crossings, 
	\begin{center}
		\includegraphics[scale=0.65]{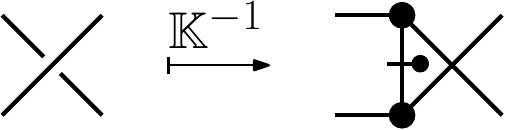}
	\end{center}
Replace all virtual crossings of the diagram with intersections between unmatched edges.

A morphism in \( D ( \mathbb{L} ) \) is a finite sequence of virtual Reidemeister moves. The functor \( \Kinv \) acts on such a morphism by applying the above replacement at every stage.
\end{definition}

Notice that the matching given by the replacement of \Cref{Def:kmap2} is a perfect matching: the unmatched edges correspond to the arcs of the virtual link diagram, while the matched edges are precisely those introduced by the replacement. Therefore if an edge corresponds to an arc of the argument virtual link diagram it is not a matched edge. It follows that the induced matching is perfect.

The reader is encouraged to apply \( \Kinv \) to a classical Reidemeister move: they will obtain one of the graphene moves \( G1 \), \( G2 \), or \( G3 \). This observation is made concrete in \Cref{Prop:welldefined2}.

\begin{lemma}\label{Lem:othercrossing}
The functor \( \mathbb{K}^{-1} \) induces the following:
\begin{center}
\includegraphics[scale=0.65]{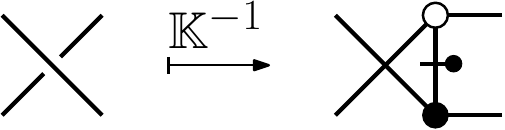}
\end{center}
\end{lemma}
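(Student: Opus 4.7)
The plan is to derive the rule by applying $\mathbb{K}$ to the matched-edge configuration shown on the right-hand side of the claimed identity and verifying that it reproduces the opposite classical crossing on the left-hand side; the lemma then follows from the functorial inverse relationship between $\mathbb{K}$ and $\mathbb{K}^{-1}$. The key observation is that Definition~\ref{Def:kmap1} already prescribes the action of $\mathbb{K}$ on matched edges in \emph{both} of its directional configurations, producing two distinct classical crossing types under $\mathbb{K}$. Definition~\ref{Def:kmap2} treats the $\mathbb{K}^{-1}$ image of only one of these two crossings, so the present lemma merely records the analogous rule for the second crossing, which must correspond to the second directional configuration in Definition~\ref{Def:kmap1}.

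The execution is a direct pictorial computation: read off the matched-edge configuration on the right-hand side of the claimed identity, apply the second replacement rule of Definition~\ref{Def:kmap1}, and check that the output coincides exactly with the classical crossing on the left-hand side. Establishing $\mathbb{K}(\text{RHS}) = \text{LHS}$ is then equivalent to the claim $\mathbb{K}^{-1}(\text{LHS}) = \text{RHS}$, given the inverse relationship. An alternative route, if one prefers to stay on the $\mathbb{K}^{-1}$ side, is to rewrite the opposite crossing as the given crossing flanked by two virtual crossings introduced via the detour move of \Cref{Fig:detourmove}, apply Definition~\ref{Def:kmap2} (replacing the virtual crossings by unmatched intersections and the classical crossing by the matched edge of \texttt{kmap2.pdf}), and simplify using the ribbon moves $M3$ and $M4$ together with the detour move of \Cref{Lem:detour} to absorb the resulting intersections; this should return the configuration pictured in \texttt{kmap2prime.pdf}.

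The main obstacle is the careful bookkeeping of pictorial conventions — the sign of the matched edge (solid versus dotted line), the position of the direction dot, and the vertex decorations (solid versus hollow) at the two endpoints of the matched edge — through the comparison of the two crossing types. If a direct planar alignment between the pictures fails, the rotational identities of \Cref{Lem:rotate} provide the flexibility to reposition the direction dot on matched edges, and \Cref{Lem:orientationflip} accommodates any required swap between solid and hollow vertex decorations. Combined with planar isotopy (which is built into the equivalence of matched diagrams by \Cref{Def:graphene}), these suffice to reconcile the two representatives up to graphene equivalence, completing the verification.
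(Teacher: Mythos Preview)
Your primary approach relies on the inverse relationship between $\mathbb{K}$ and $\mathbb{K}^{-1}$: you want to deduce $\mathbb{K}^{-1}(\text{LHS})=\text{RHS}$ from $\mathbb{K}(\text{RHS})=\text{LHS}$. But at this point in the paper the symbol $\mathbb{K}^{-1}$ is merely the name given to the functor of Definition~\ref{Def:kmap2}; the fact that it actually inverts $\mathbb{K}$ is Theorem~\ref{Prop:inverse}, which is proved \emph{after} the present lemma (and after Proposition~\ref{Prop:welldefined2}, whose verification for $R2$ and $R3$ plausibly uses the present lemma to handle the second crossing type). So invoking ``the inverse relationship'' here is a forward reference at best and circular at worst.

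The paper's argument is far shorter than either of your routes: it simply says the lemma follows from Lemma~\ref{Lem:rotate}. The point is that the second classical crossing is the $180^\circ$ rotation of the one treated in Definition~\ref{Def:kmap2}; applying that definition in rotated form produces a matched-edge configuration, and Lemma~\ref{Lem:rotate} then rewrites it as the configuration pictured in the lemma. You do mention Lemma~\ref{Lem:rotate}, but only as a last-resort bookkeeping tool for when ``direct planar alignment fails''; in fact it is the entire content of the proof, and no appeal to $\mathbb{K}$, to detour moves, or to $M3$/$M4$ is needed.
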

\Cref{Lem:othercrossing} follows from \Cref{Lem:rotate}.

The functor \( \mathbb{K}^{-1} \) descends to a functor on virtual links. 
\begin{proposition}\label{Prop:welldefined2}
Let \( D_1 \) and \( D_2\) be virtual link diagrams. If \( D_1 \) is related to \( D_2 \) via virtual Reidemeister moves, then \( \mathbb{K}^{-1} ( D_1 ) \) is related to \( \mathbb{K}^{-1} ( D_2 ) \) via graphene moves.
\end{proposition}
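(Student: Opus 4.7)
The plan is to mirror the proof of \Cref{Prop:welldefined1}, replacing each virtual Reidemeister move by its image under \( \Kinv \) and exhibiting a finite sequence of graphene moves between the result and the image of the other side of the move. By transitivity, it suffices to check the claim for a single virtual Reidemeister move.

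First I would handle the purely virtual moves \( V1, V2, V3 \). Under \( \Kinv \), virtual crossings are sent to transverse intersections of unmatched edges, so these three moves are sent respectively to the ribbon moves \( M1, M2, M3 \) of \Cref{Def:ribbonmoves}, up to planar isotopy. Next I would address the mixed move \( VM \): applying \( \Kinv \) converts the classical crossing into a matched edge traversed by another unmatched edge with two virtual (now transverse) intersections. This configuration is precisely the set-up of the ribbon move \( M4 \) (for matched endpoints, with the detour move of \Cref{Lem:detour} used as needed to reposition the strand that crosses the matched edge). For the crossing of opposite sign, I would invoke \Cref{Lem:othercrossing} to reduce to the case already treated.

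The main work lies in the classical Reidemeister moves \( R1, R2, R3 \). For \( R1 \), applying \( \Kinv \) to the curl yields a matched edge forming a small bigon with a single free endpoint, which is exactly (one of the configurations of) the \( G1 \) move of \Cref{Def:graphenemoves}; the other sign and direction configurations follow from \Cref{Lem:othercrossing,Lem:rotate}. For \( R2 \), the two classical crossings produce two matched edges sharing the same pair of regions, and the resulting diagram matches the \( G2 \) square relation (after possibly using \Cref{Lem:graphenezmove} to align directions). For \( R3 \), one obtains a hexagonal region bounded by three matched edges, which is exactly the \( G3 \) move; again one uses \Cref{Lem:othercrossing,Lem:rotate} to handle alternative crossing signs and directions. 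In each of these classical cases the work is essentially a diagram chase verifying that the images on the two sides of the move fit into the corresponding graphene relation.

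The main obstacle will be the combinatorial case analysis: each Reidemeister move has several variants coming from the choice of over/undercrossings and edge orientations, and each of these produces a matched diagram whose matched edges carry directions and signs that must be matched up to a single specific instance of a \( G \) move. The tools for collapsing this explosion are exactly \Cref{Lem:othercrossing} (which trades one crossing convention for the other at the cost of a rotation) and \Cref{Lem:rotate,Lem:graphenezmove} (which provide the allowed rearrangements of directions and decorations around a matched edge). Once these are in hand, each case reduces to a single explicit diagrammatic equality verified by applying the replacement rules of \Cref{Def:kmap2} and one or two ribbon moves to bring the result into the standard form of the relevant graphene move.
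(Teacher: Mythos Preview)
Your proposal is correct and follows essentially the same approach as the paper: reduce to single moves by transitivity, send \(V1,V2,V3\) to \(M1,M2,M3\), handle \(VM\) via \(M4\) together with ribbon moves (the paper phrases this as reversing the \(M4\) argument from \Cref{Prop:welldefined1}), and send \(R1,R2,R3\) to \(G1,G2,G3\). The paper's explicit verification of the \(R1\) case uses \Cref{Lem:detour}, \(M5\), and \(G1\), which is consistent with your remark about using ribbon moves to bring the image into standard form before applying the relevant \(G\) move.
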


\begin{proof}
Let \( D_1 \) and \( D_2 \) be virtual link diagrams. We are required to show that if \( D_1 \) is related to \( D_2 \) via a finite sequence of virtual Reidemeister moves and planar isotopies, then \( \mathbb{K}^{-1} ( D_1 ) \) is related to \( \mathbb{K}^{-1} ( D_2 ) \) via a finite sequence of graphene moves and planar isotopies. Again, it suffices to verify this for sequences of length one.

The moves \( V1 \), \( V2 \), and \( V3 \) of \Cref{Fig:vrms} are sent to the ribbon moves \( M1 \), \( M2 \), and \( M3 \). The move \( VM \) is dealt with by reversing the section of the proof of \Cref{Prop:welldefined1} dealing with \( M4 \).

The classical Reidemeister moves $R1$ to $R3$ depicted in \Cref{Fig:crms} are sent to the graphene moves \( G1 \) to \( G3 \) respectively. We verify this for the \( R1 \) move, leaving the other cases to the reader. We have 
\begin{center}
\includegraphics[scale=0.65]{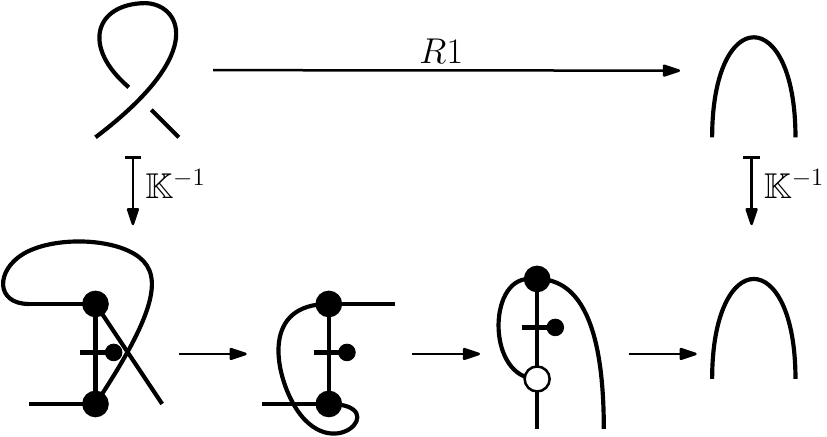}
\end{center}
via \Cref{Lem:detour}, \( M 5 \), and \( G1 \).
\end{proof}
As with \( \K \), we abuse notation to denote by \( \mathbb{K}^{-1} \) the functor from \( \mathbb{L} \) to \( \mathbb{G} \) induced by \Cref{Def:kmap2}.

We now verify that the functors \( \mathbb{K} \) and \( \mathbb{K}^{-1} \) are inverse to one another.
\begin{theorem}\label{Prop:inverse}
For all graphenes \( \mathcal{G} \) and virtual links \( L \) we have
\begin{equation*}
	\begin{aligned}
		\mathbb{K}^{-1} \left( \mathbb{K} \left( \mathcal{G} \right) \right) &= \mathcal{G} \\
		\mathbb{K} \left( \mathbb{K}^{-1} \left( L \right) \right) &= L
	\end{aligned}
\end{equation*}
\end{theorem}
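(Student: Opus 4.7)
The strategy is to verify both compositions first at the diagram level on suitably restricted classes of diagrams, and then lift to the level of equivalence classes using \Cref{Prop:welldefined1,Prop:welldefined2}. Because \( \K \) and \( \Kinv \) are defined via purely local replacement rules, the heart of the argument is a direct tabulation showing that the local rules are inverse to one another.

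First, consider \( \K ( \Kinv ( L ) ) = L \). Let \( D \) be a virtual link diagram of \( L \). The functor \( \Kinv \) replaces each classical crossing of \( D \) with a matched edge, whose direction, sign, and vertex decorations are determined by the crossing type, as specified in \Cref{Def:kmap2} and \Cref{Lem:othercrossing}; it replaces each virtual crossing by an intersection of two non-matched edges. By construction, the resulting matched diagram \( \Kinv ( D ) \) lies in the subcategory \( X \) of \Cref{Def:kmap1}, because all vertices are solid and no matched edge intersects another edge. Thus \( \K \) can be applied directly to \( \Kinv ( D ) \). Comparing the two replacement rules pairwise, the matched edges produced by \( \Kinv \) are sent back by \( \K \) to the original classical crossings, and non-matched intersections are sent back to virtual crossings. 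Hence \( \K ( \Kinv ( D ) ) = D \) on the nose, and in particular \( \K ( \Kinv ( L ) ) = L \) as virtual links.

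For the identity \( \Kinv ( \K ( \mathcal{G} ) ) = \mathcal{G} \), the extension of \( \K \) via \Cref{lemma:extend_K} must be unwound. Given any matched diagram \( D \) of \( \mathcal{G} \), apply the ribbon moves \( M1 \)--\( M5 \) as in the proof of \Cref{lemma:extend_K} to produce an equivalent diagram \( D' \in X \). By \Cref{Thm:ribbons} the diagrams \( D \) and \( D' \) represent the same graphene, and by the definition of the extension \( \K ( D ) = \K ( D' ) \). On \( D' \) the tabulation above shows \( \Kinv ( \K ( D' ) ) = D' \) as matched diagrams: each matched edge of \( D' \) becomes a classical crossing under \( \K \), and that crossing is sent back under \( \Kinv \) to a matched edge with the same direction, sign, and (solid) vertex decorations. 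Consequently \( \Kinv ( \K ( D ) ) \) represents \( \mathcal{G} \), and the equality holds in \( \BG \).

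The main obstacle is not conceptual but rather careful bookkeeping of decoration data: one must verify that the four relevant local pictures, namely the two replacement rules for \( \K \) indexed by direction and the two for \( \Kinv \) indexed by crossing type (the second being supplied by \Cref{Lem:othercrossing}), pair up correctly, so that the compositions agree on directions, signs, and vertex decorations. Once this pairwise check is in hand, both identities follow: the direct calculation gives them at the diagram level on the canonical classes described above, and \Cref{Prop:welldefined1,Prop:welldefined2} ensure the statements descend to equalities in the categories \( \BG \) and \( \BL \) of equivalence classes.
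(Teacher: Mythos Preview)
Your proposal is correct and follows essentially the same approach as the paper: both arguments reduce to checking that the local replacement rules defining \( \K \) and \( \Kinv \) undo one another, with the paper simply exhibiting the two local compositions pictorially (for positive matched edges, leaving the negative case to the reader) while you spell out the reduction to the subcategory \( X \) and the invocation of \Cref{Prop:welldefined1,Prop:welldefined2} more explicitly. One small caution: your phrase ``on the nose'' is slightly optimistic, since \Cref{Lem:othercrossing} relies on \Cref{Lem:rotate} and hence on graphene moves, so strictly speaking the diagram-level identity \( \Kinv ( \K ( D' ) ) = D' \) may hold only up to such moves; this does not affect the conclusion in \( \BG \), but it is worth stating accurately.
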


\begin{proof}
We prove the case where the perfect matching edge is positive and leave the negative perfect matching edge case to the reader. We have 
\begin{center}
\includegraphics[scale=0.65]{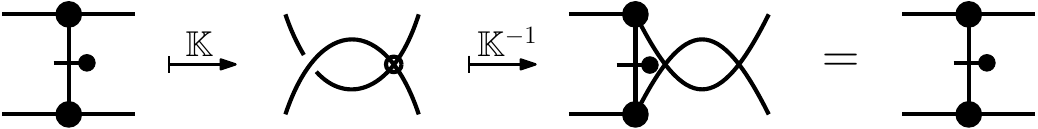}
\end{center}
and
\begin{center}
\includegraphics[scale=0.65]{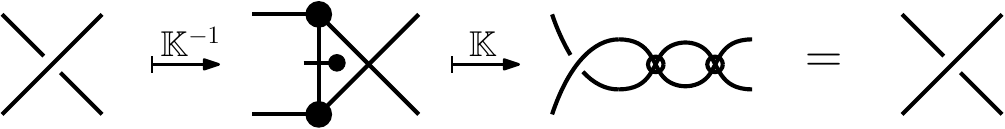}
\end{center}
\end{proof}
It follows that the categories \( \mathbb{G} \) and \( \mathbb{L} \) are isomorphic.

\begin{corollary}\label{Cor:bijection}
	The functor \( \K \) yields an isomorphism between the categories \( \mathbb{G} \) and \( \mathbb{L} \).
\end{corollary}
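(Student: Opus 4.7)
The plan is simply to assemble the three results established earlier in this section. The key observation is that the categories $\mathbb{G}$ and $\mathbb{L}$ are constructed with no non-identity morphisms, so an isomorphism between them is nothing more than a bijection on objects realised by a pair of mutually inverse functors between the full diagrammatic categories $D(\mathbb{G})$ and $D(\mathbb{L})$, once those functors are shown to descend to equivalence classes.

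The first step is to pass from diagrammatic functors to functors on equivalence classes. \Cref{Prop:welldefined1} shows that $\K$ descends to a functor $\mathbb{G}\to\mathbb{L}$: if two matched diagrams are related by a finite sequence of graphene moves, then their images under $\K$ are related by a finite sequence of virtual Reidemeister moves, so $\K$ respects the equivalence relation defining a graphene. Symmetrically, \Cref{Prop:welldefined2} shows that $\Kinv$ descends to a functor $\mathbb{L}\to\mathbb{G}$. At this stage we have well-defined functors in opposite directions between $\mathbb{G}$ and $\mathbb{L}$.

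The second step is to invoke \Cref{Prop:inverse}, which establishes that $\Kinv\circ\K$ is the identity on objects of $\mathbb{G}$ and that $\K\circ\Kinv$ is the identity on objects of $\mathbb{L}$. Because the only morphisms in either category are identities, this object-level equality automatically promotes to an equality of functors, completing the proof.

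There is no genuine obstacle here, since the meaningful content has been packaged into the earlier propositions; this corollary is a tidy consequence. The only subtlety worth noting is that \Cref{Prop:inverse} is proved by diagrammatic computation at the level of $D(\mathbb{G})$ and $D(\mathbb{L})$, but passing to the quotient categories $\mathbb{G}$ and $\mathbb{L}$ preserves the equality because identities descend to identities under the quotient. Hence $\K$ is an isomorphism of categories.
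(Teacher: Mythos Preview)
Your proposal is correct and follows exactly the approach of the paper: the corollary is stated as an immediate consequence of \Cref{Prop:welldefined1}, \Cref{Prop:welldefined2}, and \Cref{Prop:inverse}, with no further argument given. You have simply spelled out in detail what the paper leaves implicit.
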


\begin{example}\label{Ex:inverse}
The image of the trefoil under \( \mathbb{K}^{-1} \) is the following graphene:
\begin{center}
\includegraphics[scale=0.5]{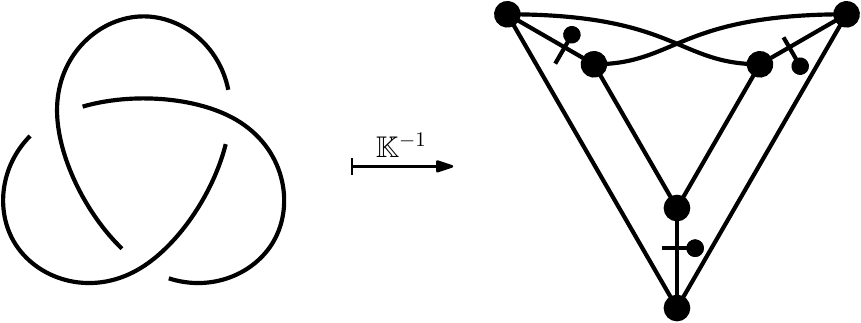}
\end{center}
Notice that the underlying graph of the graphene is bipartite and a $K_{3,3}$.
\end{example}

A perfect matching is \emph{even} if every cycle in the complement of the matching set is of even length. Notice that given a classical link \( L \), the graphene \( \mathbb{K}^{-1} ( L ) \) possesses an even perfect matching. This is depicted in \Cref{Ex:inverse}. Even perfect matchings are equivalent to \(3\)-edge-colorings, so that we obtain the following.

\begin{corollary}
	Let \( \G \) be a graphene such that \( \K ( \G ) \) is a classical link. Then the underlying graph of \( \G \) possesses an even perfect matching and is 3-edge-colorable.
\end{corollary}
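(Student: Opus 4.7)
The plan is to work with the specific representative \(\Kinv(D)\) of \(\G\), where \(D\) is any classical link diagram of \(\K(\G)\); this matched diagram represents \(\G\) by \Cref{Prop:inverse}. As observed just before the corollary, on a cubic graph an even perfect matching is equivalent to a \(3\)-edge-colouring — colour the matched edges uniformly, then alternate the two remaining colours along each even cycle of the complement. Thus it suffices to prove that the perfect matching inherited by \(\Kinv(D)\) from the \(\Kinv\)-replacement is even.

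The central combinatorial step is to identify each cycle in the complement of this matching with a component of the link \(D\). At every crossing of \(D\), the \(\Kinv\)-replacement produces two new trivalent vertices joined by a matched edge; one vertex lies on the over-strand of the crossing and the other on the under-strand, and its two unmatched edges are exactly the two arc-ends of the corresponding strand. Moving from one unmatched edge to the other at such a vertex therefore amounts to continuing along that single strand through the crossing. A complete cycle in the complement of the matching consequently traces out one link component of \(D\), visiting each crossing on whichever strand that component occupies, and its length equals the number of arcs of \(D\) on that component.

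To conclude I would invoke the Jordan curve theorem to show that each component has an even number of arcs. Writing \(s_K\) for the number of self-crossings of a component \(K\), the number of arcs of \(D\) on \(K\) is
\[
2 s_K + \sum_{K' \neq K} |K \cap K'|,
\]
since each self-crossing of \(K\) contributes one over-pass and one under-pass while each crossing between \(K\) and a different component \(K'\) contributes a single pass. The first summand is even, and each \(|K \cap K'|\) is even because the projections of two distinct components of a classical diagram are disjoint simple closed curves in the plane, and two such curves must meet in an even number of points.

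The main obstacle is justifying the strand-based identification of the two new vertices at a crossing under \(\Kinv\). This has to be read off the replacement pictured in \Cref{Def:kmap2}, and cross-checked against \Cref{Ex:inverse}: for the trefoil, the alternation of over- and under-strands as one traverses its single component produces a bipartite cycle-structure on the \(6\) vertices of \(\Kinv(\text{trefoil})\), in agreement with the bipartite graph \(K_{3,3}\). Once this identification is in place, the Jordan-curve parity count above is immediate and the corollary follows.
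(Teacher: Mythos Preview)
Your argument is correct and matches the paper's approach, which is stated only as a one-line observation preceding the corollary (``Notice that given a classical link \(L\), the graphene \(\Kinv(L)\) possesses an even perfect matching''). You have supplied the details the paper omits: the identification of cycles of \(G\setminus M\) with link components (this identification is used explicitly later in the paper, in \Cref{Sec:51}), and the parity count via intersections of closed curves in the plane.

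One small wording slip: the projections of two distinct components of a classical diagram are certainly not ``disjoint simple closed curves'' --- they are immersed closed curves that meet transversally at the crossings, and your own formula already reflects this. What you want is the fact that any two transversally intersecting closed curves in \(S^2\) meet in an even number of points, which follows from the vanishing of the mod~\(2\) intersection number (both curves are null-homotopic). With that phrasing corrected, the argument goes through cleanly.
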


A virtual link is sometimes said to be {\em even} if each component of a link has an even number of classical crossings between itself and the other components.  The $\K$ functor takes an even virtual link to a graphene with an even perfect matching. This relationship is further explored in \Cref{Sec:embeddings}.

\begin{definition}[Ungraph]\label{Def:ungraph}
The graph with no vertices and \( n \) edges is known as the \emph{ungraph of \( n \) components}.
\end{definition}
We refer to a graphene which has the ungraph of \( n \) components as a matched diagram representative as \emph{the ungraphene of \( n \) components}. The ungraphene of one component is known simply as the ungraphene.

\begin{example}\label{Ex:franklin}
The Franklin graph has a perfect matching which yields the ungraphene:
\begin{center}
\includegraphics[scale=0.55]{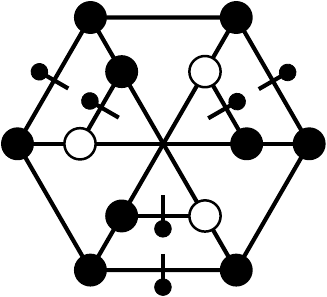}
\end{center}
This matched diagram can be reduced to the ungraph using three \( G 2 \) moves.
\end{example}

\subsection{\(Z\)-virtual links} \label{section:freevirtuallinks}
In \Cref{Def:dpm} we introduced the notion of a directed perfect matching: a decoration on the matched edges of a perfect matching. This decoration is necessary if \( \mathbb{K} \) is to be an isomorphism from the category of graphenes to that of virtual links. However, one may remove this requirement by considering a certain quotient of the category of virtual links.

\begin{definition}
	\label{Def:zmove}
	The following move on virtual link diagrams is known as the \emph{$Z$-move}
	\begin{center}
		\includegraphics{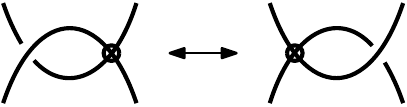}
	\end{center}
\end{definition}

A \emph{\(Z\)-virtual link} is an equivalence class of virtual link diagrams up to the virtual Reidemeister moves and the $Z$-move. We denote the category of \(Z\)-virtual links by \( \widetilde{\mathbb{L}} \).

Consider a ribbon diagram with a perfect matching without a direction. Consider the moves obtained from those given in \Cref{Def:graphenemoves} by removing the direction of the matched edges. These moves, together with those given in \Cref{Def:ribbonmoves}, form the \emph{\(Z\)-graphene moves}. A \emph{\(Z\)-graphene} is an equivalence class of ribbon diagram with undirected perfect matchings, up to the \(Z\)-graphene moves. Denote the category of \(Z\)-graphenes by \( \widetilde{\mathbb{G}} \).

As stated above, removing the direction of a perfect matching causes \( \mathbb{K} \) to fail to be an isomorphism. Adding the $Z$-move recovers this property. Notice that replacements given in \Cref{Def:kmap1} define a functor on \(Z\)-graphenes, simply by forgetting the direction; we shall denote this functor by \( \widetilde{\mathbb{K}} \).

\begin{theorem}\label{Thm:free}
	The functor \( \widetilde{\mathbb{K}} : \widetilde{\mathbb{G}} \rightarrow \widetilde{\mathbb{L}} \) from \(Z\)-graphenes to \(Z\)-virtual links is an isomorphism.
\end{theorem}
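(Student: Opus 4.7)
The plan is to leverage the isomorphism $\mathbb{K} : \mathbb{G} \to \mathbb{L}$ established in \Cref{Cor:bijection} and show that it descends to the quotient categories $\widetilde{\mathbb{G}}$ and $\widetilde{\mathbb{L}}$. Observe that $\widetilde{\mathbb{G}}$ may be viewed as the quotient of $\mathbb{G}$ by the additional relation that identifies two matched diagrams differing only in the directions assigned to their matched edges, while $\widetilde{\mathbb{L}}$ is by construction the quotient of $\mathbb{L}$ by the $Z$-move. To obtain the desired isomorphism it will therefore suffice to verify that these two extra relations correspond to one another under $\mathbb{K}$.

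First I would check that $\widetilde{\mathbb{K}}$ is well-defined on $\widetilde{\mathbb{G}}$. Every $Z$-graphene move lifts to a graphene move by making an arbitrary choice of direction on each matched edge, so \Cref{Prop:welldefined1} shows that such moves are carried to sequences of virtual Reidemeister moves. The essential new input needed is that reversing the direction on a single matched edge changes the image under $\mathbb{K}$ only by an application of the $Z$-move of \Cref{Def:zmove}, possibly combined with virtual Reidemeister moves. This is a local diagrammatic calculation: by \Cref{Def:kmap1} the two possible directions on a matched edge of fixed sign produce classical crossings of opposite over/under type, and I would verify that the local difference is precisely the relation imposed by \Cref{Def:zmove}. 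The analogous statement for negative matched edges follows from the same local calculation.

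Next I would check that $\widetilde{\mathbb{K}}^{-1}$ is well-defined on $\widetilde{\mathbb{L}}$. The virtual Reidemeister moves are handled by \Cref{Prop:welldefined2}. For the $Z$-move itself, I would apply $\mathbb{K}^{-1}$ to each side and use the detour move of \Cref{Lem:detour} together with the ribbon moves to simplify: the two resulting matched diagrams should differ only in the direction assigned to a single matched edge, and hence represent the same $Z$-graphene by definition.

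Once these two compatibility checks are in hand, the composition identities $\widetilde{\mathbb{K}}^{-1} \circ \widetilde{\mathbb{K}} = \mathrm{id}$ and $\widetilde{\mathbb{K}} \circ \widetilde{\mathbb{K}}^{-1} = \mathrm{id}$ descend immediately from \Cref{Prop:inverse}, since each of the new equivalence relations only enlarges the equivalence classes on its side. The main obstacle is the first diagrammatic matching between direction reversal and the $Z$-move; this is a finite case check across the two possible signs on a matched edge, but it must be carried out carefully because the $Z$-move is not one of the standard virtual Reidemeister moves and its precise interaction with the adjacent virtual crossings produced by $\mathbb{K}$ has to be tracked.
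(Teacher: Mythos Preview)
Your approach is correct and matches what the paper intends: the paper does not give a detailed proof, stating only that \Cref{Thm:free} ``may be proved using techniques essentially identical to those of \Cref{Subsec:kmap}.'' Organising those techniques as a descent of the isomorphism \( \mathbb{K} \) of \Cref{Cor:bijection} to the two quotients is a clean way to package the argument, and the key point you isolate---that reversing the direction on a matched edge corresponds under \( \mathbb{K} \) to a \( Z \)-move---is exactly the new ingredient needed.

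One caution on wording: the claim that the two directions ``produce classical crossings of opposite over/under type'' is not quite the right description. The \emph{sign} of a matched edge governs the crossing sign under \( \mathbb{K} \) (this is what \Cref{Def:kmap2} and \Cref{Lem:othercrossing} encode), whereas changing the \emph{direction} alters how the four incident half-edges are matched to the four strands at the crossing. Concretely, the two local pictures differ not by a bare crossing change but by a crossing change together with a pair of adjacent virtual crossings---and that composite is precisely the \( Z \)-move of \Cref{Def:zmove}. If you carry out the local calculation with this in mind (using \Cref{Lem:rotate} or a direct diagram chase), the verification goes through; just be careful not to conflate it with an ordinary crossing change, which is not a \( Z \)-equivalence.
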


\Cref{Thm:free} may be proved using techniques essentially identical to those of \Cref{Subsec:kmap}.

Passing to \(Z\)-graphenes and \(Z\)-virtual links has the advantage of removing the dependence of the construction on the direction on the perfect matchings. This is desirable as it moves the objects of study closer to trivalent graphs themselves. However, in \Cref{Sec:invariants}, invariants of virtual links are pulled back via \( \mathbb{K}^{-1} \) to yield invariants of graphenes. To replicate this in the case of \(Z\)-graphenes one must use invariants of \(Z\)-virtual links. While such invariants are known, they are less plentiful supply than virtual link invariants.

\section{Invariants of graphs from graphenes and virtual links}\label{Sec:invariants}
\Cref{Prop:welldefined1,Prop:welldefined2} and \Cref{Prop:inverse} establish that \( \K \) is an isomorphism between graphenes and virtual links. It follows that any invariant of virtual links may be pulled back to yield an invariant of graphenes and vice versa.

\begin{theorem}\label{Thm:invariants}
Let \( f:\BL \ra S \) be an invariant of virtual links with target category $S$. The functor \( f \circ \K \) is an invariant of graphenes. Likewise, let $g: \BG \ra S' $ be any invariant of graphenes with target category $S'$. The functor $g\circ \K^{-1}$ is an invariant of virtual links.
\end{theorem}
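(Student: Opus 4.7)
The statement is essentially a formal consequence of the isomorphism \( \K : \BG \to \BL \) established in \Cref{Cor:bijection}, together with the fact that a functor composed with a functor is again a functor. Thus the proof should be extremely short, amounting to little more than unpacking definitions.

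My plan is as follows. First, recall that an invariant of virtual links \( f : \BL \to S \) is by definition a functor out of \( \BL \), i.e.\ a rule assigning an object of \( S \) to each virtual link in a way that is constant on equivalence classes of virtual link diagrams. By \Cref{Prop:welldefined1}, \( \K \) descends to a well-defined functor from \( \BG \) to \( \BL \). Since the composition of two functors is a functor, \( f \circ \K : \BG \to S \) is a functor, which is precisely the statement that \( f \circ \K \) is an invariant of graphenes: it assigns the same value of \( S \) to any two matched diagrams related by a finite sequence of graphene moves, because such diagrams represent the same graphene \( \G \in \BG \).

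For the second half of the theorem, I would use the analogous argument with \( \K^{-1} \) in place of \( \K \). By \Cref{Prop:welldefined2}, \( \K^{-1} \) descends to a well-defined functor \( \BL \to \BG \), and so for any invariant \( g : \BG \to S' \) of graphenes, the composition \( g \circ \K^{-1} : \BL \to S' \) is a functor, i.e.\ an invariant of virtual links. Moreover, \Cref{Prop:inverse} ensures that these two constructions are mutually inverse: starting from an invariant \( f \) of virtual links, passing to \( f \circ \K \) and then composing with \( \K^{-1} \) recovers \( f \circ \K \circ \K^{-1} = f \), and likewise in the other direction.

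There is no substantive obstacle to the proof; the entire content is contained in the already-established fact that \( \K \) and \( \K^{-1} \) are well-defined functors between \( \BG \) and \( \BL \). The only potential subtlety is notational: one should be clear that ``invariant'' here means a functor out of the relevant category \( \BG \) or \( \BL \) (whose only morphisms are identities), so that well-definedness on equivalence classes is automatic once \( \K \) and \( \K^{-1} \) are known to descend to those quotient categories.
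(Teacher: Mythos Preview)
Your proposal is correct and matches the paper's treatment: the paper states \Cref{Thm:invariants} without proof, treating it as an immediate consequence of \Cref{Prop:welldefined1,Prop:welldefined2,Prop:inverse} and \Cref{Cor:bijection}, which is exactly the reasoning you spell out.
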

Modern virtual knot theory is a large subject, and a wealth of invariants of many different flavours have been defined (for an introduction see \cite{Manturov2012,Kauffman2012Intro}). These include
\begin{itemize}
	\item Alexander-like invariants \cite{Kauffman,Boden15,Sawollek,Silver01,Silver03}
	\item Jones-like invariants \cite{Dye2005,Miyazawa08,Boden2020,Kauffman2013}
	\item Homological invariants \cite{Manturov2006,Dye2014,Rushworth2017,Manturov15,Dye11, Baldridge2020}
	\item Parity-based invariants \cite{Manturov2010,Ilyutko2013,Manturov2012,Manturov13}
\end{itemize}
Our discussion here is limited to just two invariants. We begin with the fundamental group of a graphene: this construction is notable as it yields a topological invariant of an abstract graph. We also describe the binary bracket polynomial of a virtual link, and its relationship to graph coloring.

\subsection{The fundamental group}
Let \( L \) be a link in \( S^3 \). The fundamental group of the link complement \( \pi_1 \left( S^3 \setminus L \right) \) is one of the most well-studied and important invariants of classical links. Indeed, it provides a complete invariant for knots \cite{GL1989}. The group \( \pi_1 \left( S^3 \setminus L \right) \) is known as the \emph{link group}.

The construction of the link group extends to virtual links. While it no longer yields a complete invariant of virtual knots, it is nevertheless an interesting and useful tool. Using \Cref{Thm:invariants}, this yields a group-valued invariant of graphenes, that we outline below.

Let \( L \) be a virtual link. From a diagram of \( L \) one may compute a group, with generators for classical crossings and relations given by the way in which these crossings are connected. This group is an invariant of \( L \); for full details see \cite[Section 4]{Kauffman1998}. Denote by \( \pi_1 ( L ) \) the group arrived at in this manner, known as the \emph{fundamental group of \( L \)}. Note that this group is defined combinatorially; as we describe below, however, it is the fundamental group of a certain topological space.

We may now apply \Cref{Thm:invariants} to make the following definition.
\begin{definition}[Fundamental group of a graphene]
Let \( \mathcal{G} \) be a graphene. Define \emph{the fundamental group of \( \mathcal{G} \)}, denoted \( \pi_1 ( \mathcal{G} ) \), as
\begin{equation*}
	\pi_1 ( \mathcal{G} ) \coloneqq \pi_1 ( \mathbb{K} ( \mathcal{G} ) ).
\end{equation*}
\end{definition}

The group \( \pi_1 ( \mathcal{G} ) \) has the following advantageous property. Recall that \( \mathcal{G} \) is an equivalence class of graphs. While these graphs are given an oriented ribbon structure and a directed perfect matching, at no point were they embedded into an ambient space. In other words, our objects of study are abstract graphs. As such, the group \( \pi_1 ( \mathcal{G} ) \) contains intrinsic information regarding the abstract graphs, rather than relative information regarding an embedding.

On the other hand, the group \( \pi_1 ( \mathcal{G} ) \) is the \emph{bona fide} fundamental group of a topological space. To see this, recall that virtual links may be represented as equivalence classes of links in thickened surfaces (as described in \Cref{Subsec:vkt}). Suppose that \( L \) is a virtual link that has as a representative a link in \( \Sigma_g \times I \). Denote this representative by \( D \hookrightarrow \Sigma_g \times I \). After setting 
\begin{equation*}
	M = \faktor{\Sigma_g \times I}{\Sigma_g \times \lbrace 0 \rbrace},
\end{equation*}
it is demonstrated in \cite[Proposition 5.1]{Kamada2000} that the (combinatorially defined) group \( \pi_1 ( L ) \) is isomorphic to the group \( \pi_1 ( M \setminus D )\), the honest fundamental group of a topological space.

In summary, while the graphene \( \mathcal{G} \) is an abstract object without a chosen embedding, the group \( \pi_1 ( \mathcal{G} ) \) is the fundamental group of a topological space associated to \( \mathcal{G} \).

\begin{question}
	What graph-theoretic information is contained in the group \( \pi_1 ( \mathcal{G} ) \)?
\end{question}

Related to the fundamental group of a virtual link is the notion of the \emph{fundamental quandle} \cite{Joyce82}. A quandle is an algebraic structure that is particularly well-suited for generalizing the link group. The notion of a quandle has itself been generalized in many directions within virtual knot theory, all of which apply, via \Cref{Thm:invariants}, to graphenes.

\subsection{The Binary Bracket Polynomial}
In this section we recall the definition of a variant of the bracket polynomial \cite{Kauffman1987a}, called the {\em binary bracket polynomial} \cite{Kauffman2004b}, and describe its relation to graph colorings.

Let $D$ be an unoriented (virtual) link diagram. Define an {\em unlabeled state}, $S$, of $D$  to
be a choice of smoothing for each  classical crossing of $K.$ There are two choices for smoothing a given  crossing, and
thus there are $2^{N}$ unlabelled states of a diagram with $N$ classical crossings.  A {\em labeled state} is a state $S$ such
that the labels $0$ or $1$ have been assigned to each copy of \( S^1 \) in the state; an example is given in the bottom right of \Cref{Figure 4}.

\begin{figure}
	\begin{center}
		\begin{tabular}{c}
			\includegraphics[width=6cm]{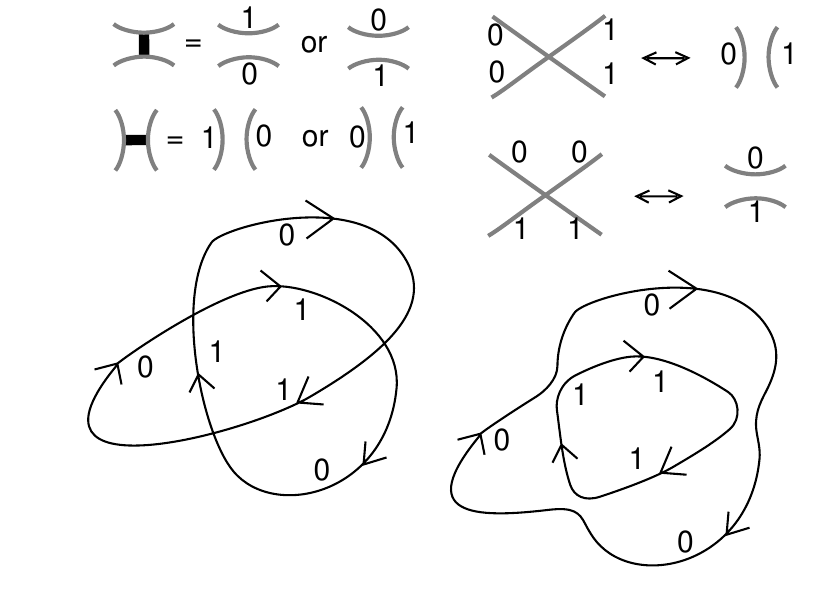}
		\end{tabular}
		\caption{Bracket smoothings}
		\label{Figure 4}
	\end{center}
\end{figure}

In a state we designate each smoothing with $A$ or $A^{-1}$ according to the left-right convention shown in the first line of \Cref{Eq:bin1}. This designation is called a {\em vertex weight} of the state. (We require of a labelled state that the two labels that occur at a smoothing of a crossing are distinct.) This is indicated by a bold line between the arcs of the smoothing as illustrated in \Cref{Figure 4}. Labelled states satisfying this condition at the site of every smoothing will be called {\em properly labeled states.} If $S$ is a properly labeled state, we let $\{ D|S \}$ denote the product of its vertex weights, and we define the two-color bracket polynomial by the equation:   
$$\{ D \} \, = \sum_{S} \{ D|S \}$$
where $S$ runs through the set of properly labelled states of $D$.

It follows from this definition that $\{ D \}$ satisfies the equations
\begin{equation}
	\begin{aligned}
		&\includegraphics[width=6cm]{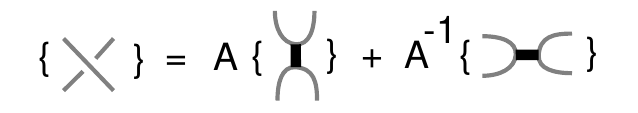} \label{Eq:bin1}\\
		&\{ D \sqcup  O \} \, = 2 \{ D \},\\
		&\{ O \} \, =2.\\
	\end{aligned}
\end{equation}

In the first equation, we indicate that the colors at the smoothing are different by the dark band placed between the arcs of the smoothing.

Define the {\em binary bracket polynomial} as $\{ D \} \, = \, \{ D \}(A)$. It associates to each unoriented virtual link diagram $D$ a Laurent polynomial in the variable $A$.

Let \( D \) be an oriented diagram of the virtual link \( L \). The binary bracket can be normalized to an invariant of oriented virtual links via the definition $$Inv_{L}(A) = A^{-w(D)} \{D \}(A).$$ The resulting invariant has a number of interesting properties, as described in \cite[Theorem 2]{Kauffman2004b}. These properties include detection of non-classicality, detection of chirality, and the counting of particular colorings of the argument link.

\begin{remark}
A virtual link diagram is colorable in two colors if and only if the diagram is {\it even}, meaning that the number of common crossings between any two components is even.

As is discussed in \Cref{Sec:51}, this condition is equivalent to the three-colorablity of the corresponding graphene with all perfect matching edges receiving the same color. Note also that when the virtual link diagram is not even, then there are no two colorings and the binary bracket vanishes. When  $A=1$, the binary bracket counts the number of $2$-colorings of the link diagram, and hence, when the link is even, is equal to $2^{N}$ where $N$ is the number of link components. These results are in parallel with the corresponding results for the Jones polynomial, discussed in \Cref{Sec:51}.
\end{remark}

\begin{figure}
	\begin{center}
		\begin{tabular}{c}
			\includegraphics[width=3cm]{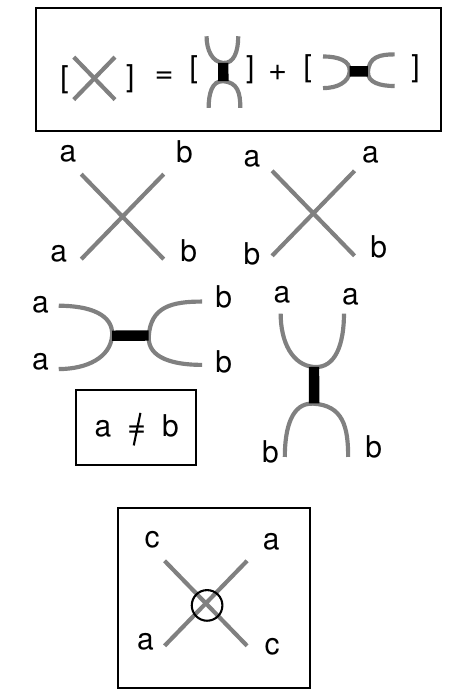}
		\end{tabular}
		\caption{Coloring rules at flat and virtual crossings.}
		\label{Figure 15}
	\end{center}
\end{figure}

\begin{figure}
	\begin{center}
		\begin{tabular}{c}
			\includegraphics[width=4cm]{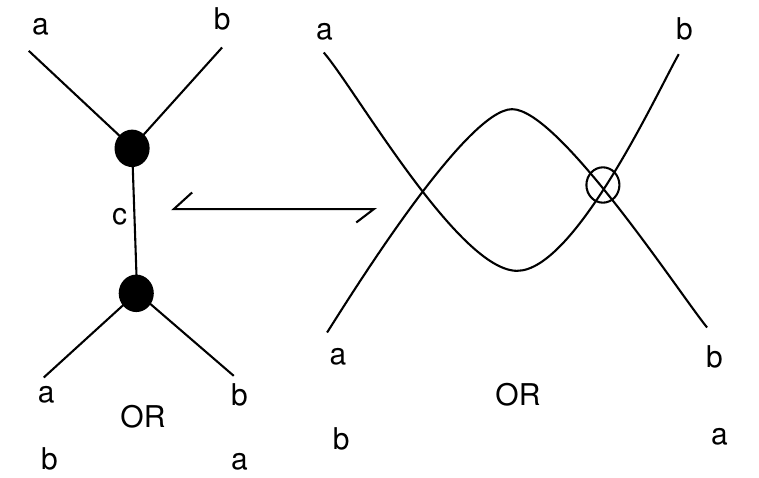}
		\end{tabular}
		\caption{Translation between cubic graphs and shadow virtual diagrams.}
		\label{Figure 21}
	\end{center}
\end{figure}

\begin{figure}
	\begin{center}
		\begin{tabular}{c}
			\includegraphics[width=4cm]{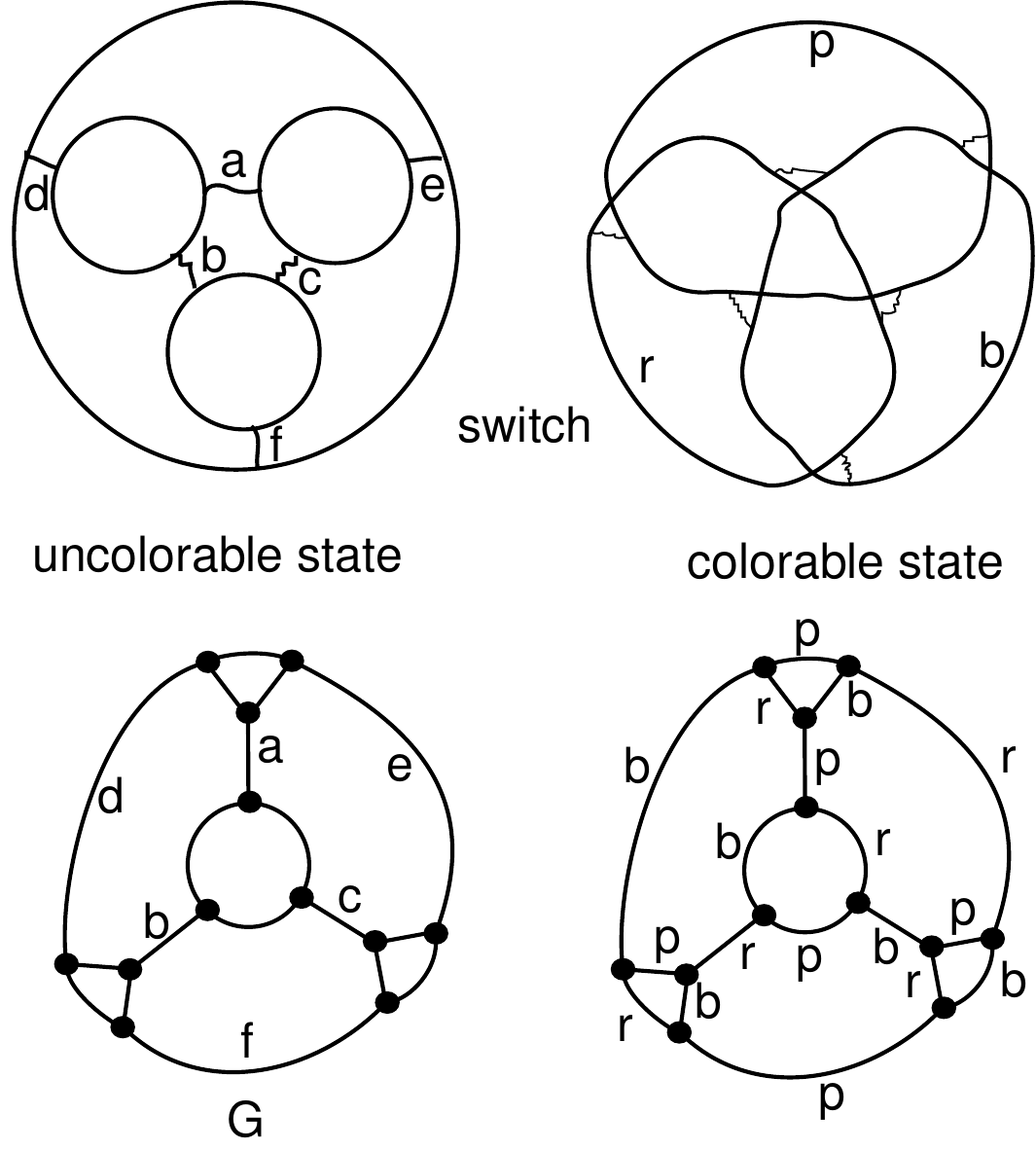}
		\end{tabular}
		\caption{Coloring a planar state and its graph.}
		\label{three}
	\end{center}
\end{figure}

\subsubsection{Colorings and generalizations}
If we allow the binary bracket to use three colors it is no longer an invariant of virtual links, but it is directly related to the coloring problem for trivalent graphs. The coloring expansion in \Cref{Figure 15} corresponds to an expansion for trivalent graphs, as depicted in \Cref{Figure 21}. This tautological expansion applies to any trivalent graph. It can be expanded by choosing a perfect matching but does not depend upon this choice.
The formula \cite{KP,StateCalculus,Map} is given below. 
$$\{ \YGlyph  \} = \{ \VGlyph \} + \{  \CGlyph  \}$$

The meaning of this equation is as follows: $\{ G \}$ denotes the number of proper 3-colorings of the edges of a trivalent graph $G.$ The graph $G$ is not assumed to be planar nor represented in the plane. The left-hand term of the formula represents a graph $G$ and one distinguished edge of $G.$ The two right hand terms of the formula represent the graph $G$ with the specific edge replaced either by a parallel connection of edges as illustrated, or by a cross-over connection of edges. In each case the resulting edges are to be colored with different colors. Thus the two graphs on the right-hand side of the formula have acquired a local edge 3-coloring. Repeated use of the formula shall result in a global edge 3-coloring, if one exists. The 3-colorings that appear in this manner are related to the generators of the homology theory described in \Cref{Sec:embeddings}.

In \Cref{three} we illustrate a planar trivalent graph and two possible results of the formula given above. One result consists in a collection of disjoint circles in the plane and is not edge 3-colorable. The other state is colorable, showing that the original trivalent graph is three-colorable.

\section{Homology theories}\label{Sec:homologytheories}
In this section we discuss homology theories of virtual links, all of which yield homology theories of graphenes via \(\K \). We also discuss the relationship between such theories and a homology theory of trivalent graphs with perfect matchings due to the first author.

The introduction of the Jones polynomial is one of the most important developments in low dimensional topology. A polynomial invariant of classical links strong enough to resolve the Tait conjectures, it has a central place in modern knot theory. Khovanov lifted the Jones polynomial to a group-valued invariant, a process known as categorification. Specifically, he defined a chain complex associated to a link diagram, the homology of which is a link invariant. This homology is a finitely generated bigraded Abelian group, the graded Euler characteristic of which recovers the Jones polynomial. Now known as Khovanov homology, this construction has also proved to be a powerful tool in the study of links and low dimensional manifolds.

Using \( \K \) it is possible to pull back such homology theories to graphenes. It is reasonable to suspect that invariants defined in this way will contain interesting graph-theoretic information. An example of this phenomenon is described in \Cref{Sec:embeddings}.

\subsection{Jones polynomial and Khovanov homology of graphenes}\label{Sec:51}
In this section we describe the Jones polynomial and Khovanov homology of a graphene.  The Jones polynomial and Khovanov homology are invariants of virtual links that depend on orientation. The corresponding notion of orientation for graphenes is an orientation of the cycles in $G\setminus M$ of the graphene $\graphene$.  (The functor $\K$ takes the cycles in $G\setminus M$ of a matched diagram $\Gamma$  of $\graphene$ to the components of the virtual link diagram $\K(\Gamma)$.)  Thus, a graphene $\graphene$ can be given a {\em cycle-orientation} by choosing an orientation for each cycle in $G\setminus M$. There are $2^\ell$ such orientations where $\ell$ is the number of cycles in $G\setminus M$.  A cycle-orientation of $\graphene$ induces an orientation of the corresponding virtual link $\K(\graphene)$, and this orientation can be used to define the {\em graphene Jones polynomial} $J_\graphene := J_{\K(\graphene)}$  and the {\em graphene Khovanov homology} $Kh(\graphene) : = Kh(\K(\graphene))$.  A consequence of \Cref{Thm:invariants} is the following.

\begin{theorem}
The graphene Jones polynomial and graphene Khovanov homology are invariants of graphenes with cycle-orientations.
\end{theorem}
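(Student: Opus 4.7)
The plan is to upgrade the isomorphism $\K:\BG \to \BL$ of \Cref{Cor:bijection} to an isomorphism between a category $\BG^{or}$ of cycle-oriented graphenes and the category $\BL^{or}$ of oriented virtual links, and then apply \Cref{Thm:invariants} to the Jones polynomial and Khovanov homology, both of which are invariants of oriented virtual links.

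First I would establish, for any matched diagram $\Gamma$ of $\graphene$, a natural bijection between the cycles of $G\setminus M$ and the components of $\K(\Gamma)$. Since $G$ is trivalent and every vertex is incident to exactly one matched edge, the subgraph $G\setminus M$ is $2$-regular and hence decomposes as a disjoint union of cycles. At each matched edge, the functor $\K$ performs the replacement of \Cref{Def:kmap1}, substituting the edge by a classical crossing whose two strands connect the two pairs of non-matched edge-ends on either side of the matched edge. Tracing a cycle of $G\setminus M$ through these crossings yields a single connected component of $\K(\Gamma)$, and every component arises this way. Consequently, choosing an orientation of each cycle of $G\setminus M$ induces an orientation of the corresponding component of $\K(\Gamma)$, and vice versa.

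Next I would check that this assignment descends to equivalence classes. Because $\K$ sends graphene moves to sequences of virtual Reidemeister moves by \Cref{Prop:welldefined1}, and because virtual Reidemeister moves preserve the oriented virtual link, a cycle-orientation on one matched diagram uniquely determines a cycle-orientation on every graphene-equivalent diagram; \Cref{Prop:welldefined2} yields the analogous statement for $\Kinv$. Formally, one defines the category $\BG^{or}$ whose objects are pairs $(\graphene,o)$ consisting of a graphene together with a cycle-orientation, and the category $\BL^{or}$ of oriented virtual links, and lifts $\K$ to a functor $\K^{or}:\BG^{or}\to\BL^{or}$ sending $(\graphene,o)$ to $(\K(\graphene),\tilde o)$, where $\tilde o$ is the induced component-orientation. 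The inverse $(\Kinv)^{or}$ is constructed in the same way using the component-to-cycle direction of the bijection, and \Cref{Prop:inverse} gives that $\K^{or}$ is an isomorphism.

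With these two steps in place the conclusion is immediate: the Jones polynomial and Khovanov homology are functors on $\BL^{or}$, so by the oriented version of \Cref{Thm:invariants}, the compositions $J\circ \K^{or}$ and $Kh\circ \K^{or}$ are invariants on $\BG^{or}$. These are precisely $J_\graphene$ and $Kh(\graphene)$. The main place where genuine checking is required is the local verification, at each ribbon and non-ribbon move, that the bijection between cycles of $G\setminus M$ and components of $\K(\Gamma)$ is preserved in a way compatible with the cycle-orientation; this is a short case-by-case analysis and is the only nontrivial step in the argument.
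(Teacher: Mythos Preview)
Your proposal is correct and follows essentially the same approach as the paper: the paper simply asserts the theorem as ``a consequence of \Cref{Thm:invariants}'', having noted just before that $\K$ takes the cycles of $G\setminus M$ to the components of $\K(\Gamma)$ so that a cycle-orientation induces a link orientation. Your write-up is considerably more careful---explicitly setting up the oriented categories $\BG^{or}$ and $\BL^{or}$ and verifying that the cycle/component bijection is compatible with the moves---but this is exactly the content the paper leaves implicit.
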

 
There is something interesting that can be done with a ribbon graph that cannot be done with a virtual link diagram: we can consider all of the different perfect matchings of the ribbon graph.  For a given ribbon graph this produces a set of matched diagrams as follows. Let $\Gamma$ be a ribbon diagram  and let $\{M_1,\dots,M_p\}$ be the set of all perfect matchings of the underlying graph of $\Gamma$. Arbitrarily choose a direction for each perfect matching edge in each perfect matching set (we can avoid these choices by passing to $Z$-graphenes, as is done in \Cref{Sec:52}).  The corresponding matched diagrams are then $\{\Gamma_1,\dots,\Gamma_p\}$. 

\begin{question}
Do invariants of the matched diagrams $\{\Gamma_1,\dots,\Gamma_p\}$ contain combinatorial information about the ribbon graph or underlying graph?
\end{question}

This question has an affirmative answer. It can be shown that the graphene Jones polynomial of a matched diagram evaluated at 1 reports the number of $2$-factors of the underlying graph of the graphene that spans its perfect matching edges (cf. \cite{BaldridgeLowranceMcCarty2018}).  As described in \Cref{Sec:motivation}, this is equal to the number of 3-edge-colorings of the underlying graph that have the same fixed color on the perfect matching edges of a perfect matching (cf. \cite{Baldridge2018}). Summing up over all colorings of all perfect matchings sets gives the total number of 3-edge-colorings of the graph.  To compute this sum, however, a cycle-orientation $\sigma_i$ must be chosen for each matched diagram $\Gamma_i$.  Then just as in the case of the planar Tait polynomial in \cite{Baldridge2018} we obtain:

$$\sum_{i=1}^p J_{\Gamma_i, \sigma_i}(1) = \#\{\mbox{3-edge-colorings of } \Gamma\}.$$

One of the main motivations of  \cite{Baldridge2020} was to remove the need to choose a cycle-orientation $\sigma_i$ for each $\Gamma_i$.  In that paper, we described the {\em unoriented Jones polynomial} $\tilde{J}_L$, which is an invariant of a virtual link $L$ that does not require choosing an orientation on the components of the link.  By  \Cref{Thm:invariants}, there is a graphene version of this polynomial for matched diagrams and it also reports the number of $2$-factors that spans the perfect matching when evaluated at 1.  In \cite{Baldridge2} and other future papers, we shall describe how to sum these polynomials together to get a Tait polynomial as in \cite{Baldridge2018} that is an invariant of the trivalent ribbon graph.  

Another  useful result that came out of \cite{Baldridge2020} is a vastly simplified definition of Khovanov homology for virtual links.  We now discuss this homology next and how it compares to the homology defined by the first author.

\subsection{Khovanov homology versus the homology theory defined by Baldridge}\label{Sec:52}
One of the main motivations of this paper was to explore the relationship between the Khovanov homology of virtual links \cite{Khovanov1999, Manturov2006,Dye2014, Baldridge2020} and the homology theory defined by the first author on planar graphs \cite{Baldridge2018}.  The functor $\K$ allows us to make such a comparison these homology theories. In this section we show that, while the two homologies are isomorphic on suitable matched diagrams, the homology theories are different in that the equivalence classes of each theory are inequivalent: one cannot use the graphene moves to perform the flip moves of planar graphs. 

First, we show how the two theories can be brought into alignment with each other by regarding a planar matched diagram of a planar free graphene as a perfect matching drawing of a planar graph \cite{Baldridge2018}.   Since Khovanov homology is invariant under the $Z$-move of \Cref{Def:zmove}, we work in the categories of \(Z\)-virtual links and \(Z\)-graphenes.  A \(Z\)-graphene $\graphene$ can be represented by a matched diagram $\Gamma$ where the perfect matching edges are not given a direction.  By using the $M5$ and $G5$ moves, we can assume that this matched diagram $\Gamma$ has no hollow vertices or negative perfect matching edges.  When the associated surface to $\Gamma$ is $S^2$, then $\Gamma$ is a planar embedding of the graph $G$ with a perfect matching $M$.  Henceforth assume that $\Gamma$ is such a matched diagram.  We refer to this type of matched diagram as a  {\em perfect matching drawing}, as in \cite{Baldridge2018}.  A perfect matching drawing can regarded as a plane drawing of a graph.

There is a natural cycle-orientation of a perfect matching drawing. Orient the cycles of $G\setminus M$ in $\Gamma$ as follows.  The cycles are a set of disjoint circles embedded in the plane;  orient a circle counter-clockwise if it is separated from  infinity by an even number of circles, otherwise orient the circle clockwise.   This cycle-orientation of $\Gamma$ orients the virtual link diagram $\K(\Gamma)$ so that each classical crossing in $\K(\Gamma)$ is a positive crossing.  

We are now ready to compare the Khovanov homology $Kh(\Gamma)$ of the perfect matching drawing $\Gamma$ with the  homology $H(\Gamma)$ defined by Baldridge in \cite{Baldridge2018}. In \Cref{Sec:motivation} the $2$-factor bracket of $\Gamma$ was shown to be equal to the Kauffman bracket of its associated virtual link diagram $\K(\Gamma)$, i.e., $$\langle \Gamma \rangle_2 = \langle \K(\Gamma) \rangle.$$ Similarly, $Kh(\Gamma)$ and $H(\Gamma)$ can be calculated directly from a matched diagram $\Gamma$ without first applying \( \K \).  This calculation is described in \cite{Baldridge2018}: the hypercube of states of $\Gamma$ is built exactly as in Khovanov theory, except that the $0$-smoothing of a perfect matching edge of $\Gamma$ is \raisebox{-0.33\height}{\includegraphics[scale=0.20]{A-smoothing.pdf}} and the $1$-smoothing is \raisebox{-0.33\height}{\includegraphics[scale=0.20]{X-smoothing.pdf}}.  (These are the smoothings used in the $2$-factor polynomial in \Cref{eq:bracket-2-factor}.)  Each state in this hypercube has exactly the same number of circles as the corresponding Khovanov homology state of $\K(\Gamma)$, and the $\Z_2$-vector spaces and maps to and from them also remain the same.  The homology defined from the chain groups defined in this way is the  homology $H(\Gamma)$.  It is ismorphic to the Khovanov homology of $\Gamma$ with $\Z_2$-coefficients, but with a shift in the second grading.

\begin{theorem}[{\cite[Section 5]{Baldridge2018}}] Let $\Gamma$ be a perfect matching drawing, with the natural cycle-orientation described above. Then
$$Kh^{i,j}(\Gamma) = H^{i,j+n}(\Gamma),$$
where \( Kh \) denotes Khovanov homology with $\Z_2$-coefficients, and $i$ the homological grading, $j$ the $q$-grading, and $n$ is the number of edges in the perfect matching of $\Gamma$.
\label{Theorem:diagrammatic-iso}
\end{theorem}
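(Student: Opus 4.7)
The plan is to establish the isomorphism at the chain level by matching the hypercubes of resolutions of $\Gamma$ and $\K(\Gamma)$ vertex-by-vertex, and then carefully identifying the $q$-grading conventions of the two constructions. Since Baldridge's construction is explicitly modelled on Khovanov's, the only real work is in pinning down the grading shift; the chain groups and differentials themselves will coincide on the nose.

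First, I would set up the correspondence of hypercubes. Because $\Gamma$ is a perfect matching drawing---planar, all vertices solid, all $n$ matched edges positive and non-self-intersecting---the functor $\K$ of \Cref{Def:kmap1} applies directly: each matched edge is replaced by a classical crossing of $\K(\Gamma)$, and the $0$- and $1$-smoothings of the matched edge on the Baldridge side coincide with the $A$- and $B$-smoothings of the corresponding crossing used by Khovanov. The natural cycle-orientation ensures every classical crossing of $\K(\Gamma)$ is positive, so $n_+ = n$ and $n_- = 0$. Hence the two hypercubes share the same vertex set $\{0,1\}^n$, and at every vertex $\alpha$ the resolved diagrams contain the same collection of $k(\alpha)$ circles.

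Next, I would identify the chain groups and their differentials. Both theories place $V^{\otimes k(\alpha)}$ at each vertex $\alpha$, with $V = \Z_2[x]/(x^2)$, and both use the Frobenius multiplication (two circles merging into one) and comultiplication (one circle splitting into two) to define the edge maps of the cube. This is exactly the alignment recorded in the excerpt from \cite{Baldridge2018}. As chain complexes of ungraded $\Z_2$-modules, $CKh(\K(\Gamma))$ and $CH(\Gamma)$ are therefore literally equal, and it only remains to compare the bigradings. For the homological grading both theories use $|\alpha|-n_-$, and since $n_-=0$ these agree term-by-term.

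The heart of the argument is comparing the $q$-gradings. Baldridge's per-state $q$-shift is designed so that the graded Euler characteristic reproduces the $2$-factor bracket $\langle\Gamma\rangle_2=\sum_\alpha(-q)^{|\alpha|}(q+q^{-1})^{k(\alpha)}$, while Khovanov's per-state shift includes an additional writhe-correction $\pm(n_+-2n_-)=\pm n$ needed to turn the Kauffman bracket into the normalised Jones polynomial. Since $\langle \K(\Gamma)\rangle=\langle\Gamma\rangle_2$ by \Cref{Sec:motivation}, the two per-state shifts differ by a constant $n$ at every vertex and on every generator, which propagates to the claimed uniform $q$-grading shift $Kh^{i,j}(\Gamma)=H^{i,j+n}(\Gamma)$.

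The main obstacle is notational rather than conceptual. Several equivalent conventions for the Khovanov per-state shift (differing in how the roles of $n_+$ and $n_-$ are apportioned) produce the same graded Euler characteristic but opposite signs for the resulting grading shift, and similarly for Baldridge's construction. The plan is therefore to write out both shift conventions explicitly---the Khovanov convention from \cite{Baldridge2020}, which is natural over $\Z_2$, and the convention of \cite{Baldridge2018}---verify at a single vertex of the cube that they differ uniformly by $n$ in the direction claimed, and then invoke the chain-level identification to conclude the bigraded isomorphism.
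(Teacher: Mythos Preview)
Your proposal is correct and follows essentially the same approach as the paper: the paper does not supply a separate proof environment for this theorem but instead argues in the paragraph preceding it that the hypercube of states for $H(\Gamma)$ coincides state-by-state with that of $Kh(\K(\Gamma))$, with identical $\Z_2$-vector spaces and edge maps, differing only by a shift in the $q$-grading. Your plan fleshes out precisely this argument, and your identification of the shift as arising from the writhe normalisation ($n_+=n$, $n_-=0$) is the correct mechanism.
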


The above theorem only holds for perfect matching drawings. Indeed, the graphene moves $G1$ and $G2$ change the underlying graph, so that performing either of these moves on $\Gamma$ breaks the hypothesis of \Cref{Theorem:diagrammatic-iso}, in general.

The remaining question is whether the $0$-, $1$-, and $2$-flip moves (cf. \cite{Baldridge2018}) on perfect matching drawings can be accomplished through a sequence of graphene moves.  If true, then planar trivalent graphs with perfect matchings would constitute a restriction to a subcategory of virtual links and a set of  Reidemeister moves that preserve planarity.  By \Cref{Lem:orientationflip}, a $0$-flip move that takes a component of $\Gamma$ to its mirror can be done with graphene moves.  Using similar thinking and calculations, one can attempt to perform a $2$-flip (or a $1$-flip) using graphene moves---the flipping region can be flipped over except for the introduction of a ``twist'' on the top and bottom edges, as depicted in the central diagram of \Cref{Fig:2-flip-move-example}.

\begin{figure}
\includegraphics[scale=0.7]{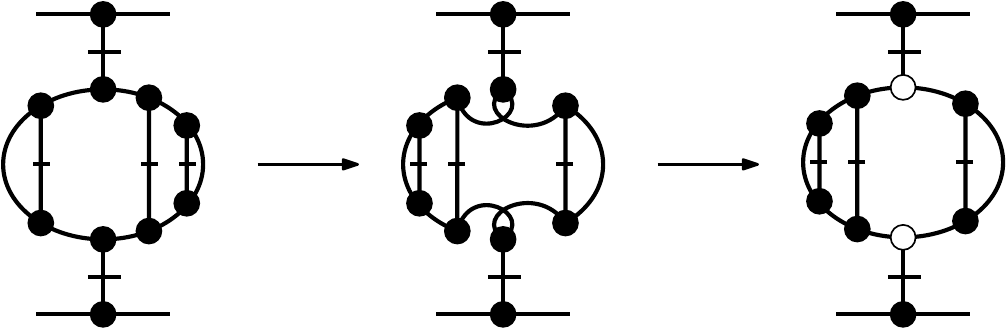}
\caption{Attempting a 2-flip using only graphene moves. }
\label{Fig:2-flip-move-example}
\end{figure}

The rightmost diagram of \Cref{Fig:2-flip-move-example} shows that one can do a $2$-flip using graphene moves up to introducing two hollow vertices on the top and bottom edges.  Removing these hollow vertices is the problem.  A positive matching edge with a hollow  and solid vertex is equivalent to a negative perfect matching edge with two solid vertices by $G5$.  We would need a move that takes a positive perfect matching edge to a negative perfect matching edge that preserves the graphene and graphene Khovanov homology.  However, comparing \( \K \) in \Cref{Def:kmap1} on positive and negative edges, this move on virtual links would be equivalent to the move depicted in \Cref{Fig:virtual-switch}.

\begin{figure}
\includegraphics[scale=0.7]{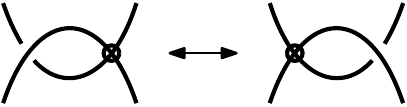}
\caption{The virtual switch move does not preserve a virtual link in general. In terms of graphenes, it is equivalent to switching the perfect matching edge from positive to negative.}
\label{Fig:virtual-switch}
\end{figure}

This move is  {\em not} a $Z$-move, but what the second author called a {\em virtual switch} \cite{Kauffman2001}.  A virtual switch cannot be performed using Reidemeister moves.  In fact, it is well known that the Jones polynomial and Khovanov homologies are not invariant under such a move. Thus, we arrive at the fact that performing a 2-flip move on a perfect matching drawing cannot be accomplished through Reidemeister moves: the  homology theory of the first author cannot be couched in the theory of virtual links despite the strong connections evidenced by \Cref{Theorem:diagrammatic-iso}.

\begin{theorem}
Let $\Gamma_1$ and $\Gamma_2$ be two perfect matching diagrams of a planar trivalent graph with perfect matching $(G,M)$.  If $\Gamma_1$ and $\Gamma_2$ are related by a 2-flip move, then  $\Gamma_1$ and $\Gamma_2$ are  inequivalent as graphenes using the graphene moves.
\end{theorem}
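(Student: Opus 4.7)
The plan is to proceed by contradiction, using the functor $\K$ to translate a hypothetical graphene equivalence between $\Gamma_1$ and $\Gamma_2$ into a statement about classical link diagrams, and then deriving a contradiction via \Cref{Thm:GPV} together with \Cref{Theorem:diagrammatic-iso}. Suppose $\Gamma_1$ and $\Gamma_2$ are graphene-equivalent. By \Cref{Prop:welldefined1} the diagrams $\K(\Gamma_1)$ and $\K(\Gamma_2)$ are equivalent as virtual links. Since $\Gamma_1$ and $\Gamma_2$ are perfect matching drawings --- planar, with all solid vertices and positive matching edges --- no virtual crossings appear in either $\K(\Gamma_1)$ or $\K(\Gamma_2)$, so both are classical link diagrams. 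By \Cref{Thm:GPV} they must therefore be related by a finite sequence of classical Reidemeister moves.

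Next I would track what the 2-flip does under $\K$. A 2-flip is performed across a pair of cut edges of the underlying planar graph $G$, reversing the cyclic order at the two vertices bordering the flipped region. Following the discussion around \Cref{Fig:2-flip-move-example}, the 2-flip can be realized within the graphene category up to the introduction of two hollow vertices sitting on the cut edges; using $M5$ and $G5$ these hollow vertices may be absorbed into sign changes on the two matching edges at the cut, converting them from positive to negative. Under $\K$, these residual sign changes correspond exactly to the move of \Cref{Fig:virtual-switch} --- the virtual switch --- performed at the two crossings of $\K(\Gamma_1)$ arising from the cut matching edges. Thus any hypothetical graphene equivalence from $\Gamma_1$ to $\Gamma_2$ would force the corresponding classical link diagrams to be related by a sequence involving, in addition to Reidemeister moves, exactly this double virtual switch at a prescribed pair of crossings.

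The crux, and the main obstacle, is to show that this residual double virtual switch cannot be cancelled by classical Reidemeister moves, universally across all 2-flips. Here I would invoke \Cref{Theorem:diagrammatic-iso}, which identifies the graphene Khovanov homology of a perfect matching drawing with the Khovanov homology of its $\K$-image, up to a shift in the $q$-grading. Since the virtual switch is known not to preserve Khovanov homology, I would compute the graphene Khovanov homology of $\Gamma_1$ and $\Gamma_2$ directly from their respective perfect matching drawings (using the hypercube-of-states construction of \cite{Baldridge2018}) and show that the two bigraded groups disagree, contradicting the classical equivalence just derived. The delicate part is establishing this disagreement \emph{universally}: I would isolate a local computation at the two cut crossings, showing that the switch alters the Khovanov chain complex in a way (for instance, a sign change in a saddle differential, or a parity obstruction) that cannot be cancelled by any global sequence of classical Reidemeister moves applied elsewhere. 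If Khovanov homology alone is insufficient in some degenerate cases, a finer bigraded or parity-based virtual-link invariant from the list in \Cref{Thm:invariants} would be brought in to close the argument.
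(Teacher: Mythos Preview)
Your overall architecture --- translate via \(\K\), reduce to a statement about link diagrams, and look for an invariant that detects the virtual switch --- matches the paper's reasoning. The paper does not go through \Cref{Thm:GPV}; it simply observes that under the isomorphism \(\K\) the residual operation (converting a positive matching edge to a negative one) becomes the virtual switch of \Cref{Fig:virtual-switch}, and then invokes the well-known fact that the Jones polynomial and Khovanov homology are not invariant under a virtual switch. That is the entire argument: no local chain-level computation, no case analysis.

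The genuine gap in your proposal is the choice of distinguishing invariant. You plan to invoke \Cref{Theorem:diagrammatic-iso} and then ``show that the two bigraded groups disagree.'' But \Cref{Theorem:diagrammatic-iso} identifies \(Kh(\Gamma_i)\) (with \(\Z_2\) coefficients) with Baldridge's homology \(H(\Gamma_i)\) up to a grading shift, and the whole point of \cite{Baldridge2018} is that \(H\) is \emph{invariant} under 2-flips. Indeed, the very next theorem in the paper (the restatement of \cite[Theorem~4.3]{Baldridge2018}) records exactly this: \(\Z_2\)-Khovanov homology is preserved by the mutation in \Cref{Fig:2-flip-mutation}. So your computation will necessarily produce \(Kh(\Gamma_1)\cong Kh(\Gamma_2)\), and the contradiction you are aiming for will not appear. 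Your hedge at the end (``a finer invariant would be brought in'') is the only part that could succeed, but as written it is not an argument.

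A smaller point: the theorem as stated is universal (``any 2-flip yields inequivalent graphenes''), and neither the paper's discussion nor your outline actually establishes this in full generality; both are really arguing that the 2-flip move is not realizable by graphene moves in general. The paper's justification is that a \emph{single} virtual switch alters the Jones polynomial, not that the specific double-switch arising from an arbitrary 2-flip always does. You correctly flag this as ``the delicate part,'' but you should be aware that the paper does not resolve it either.
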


Using \( \K \), a  $2$-flip move on the perfect matching drawing $\Gamma$ {\em does} correspond to a special type of ``mutation'' on the virtual link $\K(\Gamma)$, as depicted in \Cref{Fig:2-flip-mutation}.

\begin{figure}
\includegraphics[scale=1.2]{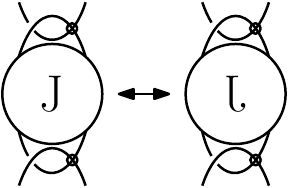}
\caption{A $2$-flip on perfect matching diagrams is equivalent to a special type of mutation on a virtual link.}
\label{Fig:2-flip-mutation}
\end{figure}

Khovanov homology with $\Z_2$-coefficients is invariant under mutations for classical links  \cite{Bloom2010, Wehrli2010} (see also \cite{Lambert-Cole2019}). However, there is no reason to expect invariance in the virtual category. A mutation of a classical link removes a $3$-ball from the $3$-sphere whose boundary intersects the link in four points and glues the ball back using an involution of the boundary.  As described in \Cref{sec:virt-links-in-thick-surfaces}, virtual links are links in thickened surfaces (up to stabilization).  Since a virtual tangle is not necessarily contained in a $3$-ball, there is no obvious analogous operation to mutation.  However, if the tangle and mutation looks like the picture in \Cref{Fig:2-flip-mutation}, then the $\Z_2$-coefficient Khovanov homology is invariant under such a mutation. Thus, the main theorem of \cite{Baldridge2018} can be seen as the first theorem in the literature of a mutation-like operation on virtual tangles that preserves the $\Z_2$-coefficient Khovanov homology of a virtual link.

\begin{theorem}[Theorem 4.3 of \cite{Baldridge2018}]
Let $L$ be an oriented virtual link, $D$ a virtual link diagram of $L$, and $J$ a virtual tangle of $D$ as pictured on the left of \Cref{Fig:2-flip-mutation}.  Let $D'$ be the corresponding link diagram as pictured on the right of \Cref{Fig:2-flip-mutation}, but with the opposite orientation on every component of $J$.  Then $$Kh(D) \cong Kh(D').$$ A similar statement holds for the image of a $1$-flip move under $\K$.
\end{theorem}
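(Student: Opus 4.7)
The plan is to deduce the theorem by combining two ingredients already in hand: the diagrammatic isomorphism of \Cref{Theorem:diagrammatic-iso}, which identifies $\Z_2$-coefficient Khovanov homology of $\K(\Gamma)$ with Baldridge's homology $H(\Gamma)$ up to a grading shift (for perfect matching drawings $\Gamma$), together with the invariance of $H$ under $p$-flip moves established in \cite{Baldridge2018}. The virtual mutation in the statement is engineered to be the image under $\K$ of a $2$-flip (or $1$-flip) of matched diagrams, so the strategy is to transfer the known invariance on the graph side across the $\K$ correspondence, then argue by locality that this suffices even when $D$ is not globally in the image of $\K$ from a perfect matching drawing.

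The first step is to handle the case in which $D = \K(\Gamma)$ and $D' = \K(\Gamma')$ for perfect matching drawings $\Gamma, \Gamma'$ differing by a $2$-flip in the region $\K^{-1}(J)$, equipped with the natural cycle-orientations coming from their planar structures. A $2$-flip preserves the underlying graph and hence the number $n$ of perfect matching edges, so \Cref{Theorem:diagrammatic-iso} yields $Kh^{i,j}(D) \cong H^{i,j+n}(\Gamma)$ and $Kh^{i,j}(D') \cong H^{i,j+n}(\Gamma')$. The $2$-flip invariance of \cite{Baldridge2018} gives $H(\Gamma) \cong H(\Gamma')$, which closes the case. The orientation reversal on the components of $J$ prescribed in the statement is precisely what is needed to realign the natural cycle-orientations of $\Gamma$ and $\Gamma'$ after the flip, so the Khovanov isomorphism above is between the correctly oriented homologies.

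To extend to a general virtual link diagram $D$, I would argue by locality of the Khovanov TQFT. The tangle $J$ and its complement glue along four shared boundary points, and the chain complex of $D$ is assembled from their relative Khovanov tangle complexes. Since the mutation replaces only the interior of $J$, any chain isomorphism between the relative complexes of $J$ and $J'$ that respects the boundary extends by the identity on the complement to yield $Kh(D) \cong Kh(D')$. The closed planar case treated above, applied to a perfect matching drawing obtained by closing off $\K^{-1}(J)$ in the plane, supplies such a boundary-compatible chain map via the \cite{Baldridge2018} construction.

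The main obstacle is making this locality step precise: the chain map realizing $p$-flip invariance in \cite{Baldridge2018} is defined on closed planar matched diagrams, and one must verify that its construction restricts to a chain map on the relative complex of the tangle. I expect this to work because a $p$-flip is itself a local operation whose only effect at the tangle boundary is on the four arcs running through it, and the Baldridge-to-Khovanov translation of \Cref{Theorem:diagrammatic-iso} preserves the underlying Frobenius algebra structure on state circles; however the interaction of the $n$-shift with circular boundary contributions, and the need to work with $\Z_2$ coefficients to absorb sign ambiguities in the cobordism assignments, must be tracked carefully. The $1$-flip case is entirely analogous, invoking the corresponding $1$-flip invariance from \cite{Baldridge2018} in place of the $2$-flip version.
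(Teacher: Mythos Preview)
The paper does not supply an independent proof of this statement: it is presented as a restatement of Theorem~4.3 of \cite{Baldridge2018} in the language of virtual links, the point being that under $\K$ a $2$-flip on a perfect matching drawing becomes precisely the mutation of \Cref{Fig:2-flip-mutation}. So there is no detailed argument in the paper to compare against; the implicit claim is simply that the proof in \cite{Baldridge2018} \emph{is} the proof here, once one reads the chain complex through $\K$.

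Your strategy captures this correctly in spirit, but the two-stage decomposition (first the globally planar case $D=\K(\Gamma)$ via \Cref{Theorem:diagrammatic-iso}, then a separate tangle-TQFT locality argument for general virtual $D$) is more roundabout than what the paper has in mind. The identification behind \Cref{Theorem:diagrammatic-iso} is not just an isomorphism of homology groups but an identification of chain complexes made crossing-by-crossing: as the paper states, the hypercube of states, the $\Z_2$-vector spaces, and the edge maps for $H(\Gamma)$ and for $Kh(\K(\Gamma))$ are literally the same. That dictionary is local, so whatever chain-level argument in \cite{Baldridge2018} establishes flip-invariance already acts on the Khovanov complex of the tangle $J$, independent of what $D$ looks like outside $J$. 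You do not need to first close off $\K^{-1}(J)$ to a planar diagram and then reopen via gluing; the locality you flag as the ``main obstacle'' is already built into the translation.

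So your acknowledged gap is not a genuine one: once you observe that the two chain complexes coincide edge-by-edge, the restriction of the \cite{Baldridge2018} chain isomorphism to the tangle is automatic, and the result for arbitrary virtual $D$ follows immediately rather than requiring a further argument. Your remarks on the orientation reversal and the grading shift are accurate.
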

  
In this way we see the main difference between invariants of virtual links and invariants of planar $3$-regular (trivalent) graphs:  link invariants are invariants of the virtual Reidemeister moves and planar graphs invariants are invariants of special mutations of their corresponding virtual links. The $2$-factor polynomial and homology theory of the first author can be generalized to $k$-regular planar graphs for $k>3$, which have no counterpart in virtual link theory. Hence it is good to think of Khovanov homology and the homology theory defined by Baldridge as separate theories.  But they come together nicely in the case $k=3$. We shall discuss this relationship further in \cite{Baldridge2}.

\section{Strong embeddings and doubled Lee homology}\label{Sec:embeddings}
In this section we describe the relationship between strong embeddings of graphs and a homology theory of virtual links. We demonstrate that this relationship may be used to associate a pair of integers to a strong embedding. When working with virtual links, these integers contain deep geometric information. It is tantalizing to ask what graph-theoretic information they contain.

In \Cref{Sec:strong+cycles} we describe the association between strong embeddings and certain cycles of a graph. In \Cref{Sec:dkh} we outline the homology theory of virtual links known as doubled Lee homology, before exhibiting its connection with strong embeddings in \Cref{Sec:strong+dkh}.

\subsection{Bicolored multicycles yield strong embeddings}\label{Sec:strong+cycles}
The complement of a perfect matching in a graph is a union of cycles. We demonstrate that a particular coloring of such a union of cycles yields a strong embedding of the graph. We begin by recalling the definition of a strong embedding.

\begin{definition}
	An embedding of a graph \( G \) in a closed surface \( \Sigma \) is a \emph{strong embedding} if the complement \( \Sigma \setminus G \) is a disjoint union of discs. That is, the boundary of each face of a strong embedding is a simple cycle.
\end{definition}
A strong embedding is also known as a \emph{cellular embedding}.

Let \( G \) be a trivalent graph and \( M \) a perfect matching of it. The complement \( G \setminus M \) is a disjoint union of cycles of \( G \). A \emph{bicolored multicycle} is a coloring of each of the edges of \( G \setminus M \) by exactly one of two colors such that each vertex has one incoming edge of each color. A bicolored multicycle may be formed from a Tait coloring by declaring the edges of one color as the matching (see also \cite[Section 4.1]{Jaeger1985}). Let \( B ( G,M ) \) be the set of bicolored multicycles formed from \( G \setminus M \). It is clear that \( B ( G , M) \neq \emptyset \) if and only if \( M \) is even.

\begin{proposition}\label{Prop:bcm2c}
	Let \( G \) be a trivalent graph, \( M \) a perfect matching of it, and \( B ( G,M ) \) the associated set of bicolored multicycles. To every \( C \in B ( G,M ) \) there is an associated strong embedding of \( G \).
\end{proposition}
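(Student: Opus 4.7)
The plan is to construct the closed surface directly from the coloring, using one 2-cell for each cycle formed by pairs of color classes. Observe first that a bicolored multicycle $C \in B(G,M)$ is equivalent to a Tait 3-coloring of $G$ whose third color class is exactly $M$: the matched edges form one color, and the two colors of $C$ give the other two. Write these three edge classes as $M$, $R$, $B$. Because $G$ is trivalent and each class is a perfect matching (by the defining property of $C$, each vertex has exactly one incident edge of each color), each of the pairwise unions $M\cup R$, $M\cup B$, and $R\cup B$ is a 2-regular spanning subgraph of $G$ and is therefore a disjoint union of cycles (alternating between the two relevant colors). Denote these three collections of cycles by $\mathcal{F}_{MR}$, $\mathcal{F}_{MB}$, $\mathcal{F}_{RB}$.

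Next I would build a 2-complex $\Sigma$ with 1-skeleton $G$ by attaching, for every cycle in $\mathcal{F}_{MR}\sqcup\mathcal{F}_{MB}\sqcup\mathcal{F}_{RB}$, a 2-disc along that cycle in the obvious way. I claim $\Sigma$ is a closed surface and the inclusion $G\hookrightarrow\Sigma$ is a strong embedding. For the local verification at an edge $e$, notice that each edge of $G$ belongs to exactly two of the three pairwise unions (for instance, a red edge lies in $M\cup R$ and $R\cup B$ but not $M\cup B$), so precisely two 2-cells are attached along $e$; a neighborhood of an interior point of $e$ is thus a disc. For the local verification at a vertex $v$, the three edges $m_v\in M$, $r_v\in R$, $b_v\in B$ at $v$ are paired up by the three face families so that the $MR$-face at $v$ sits between $m_v$ and $r_v$, the $MB$-face between $m_v$ and $b_v$, and the $RB$-face between $r_v$ and $b_v$. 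The three 2-cells therefore glue around $v$ in the cyclic order $m_v, r_v, b_v$ (no other gluing pattern is possible because each face uses exactly two of the three edges at $v$), so the link of $v$ is a single circle and a neighborhood of $v$ is a disc. Consequently $\Sigma$ is a closed 2-manifold with every complementary face an open disc, i.e.\ a strong embedding.

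The main obstacle is exactly this local check at the vertex: one has to be sure that the three faces from the three pairings assemble around $v$ into a disc rather than creating a non-manifold singularity or a non-disc face. The argument above resolves this by the combinatorial observation that at a trivalent vertex with three distinctly colored edges the three pairwise pairings induce a canonical cyclic order. If one additionally wants $\Sigma$ orientable, one can orient each 2-cell so that the $MR$-, $MB$-, and $RB$-faces at every vertex wind in the same rotational sense around $v$; consistency of this choice across all vertices can be verified by walking around each 2-cell and checking that adjacent 2-cells receive opposite boundary orientations. Otherwise, one may allow $\Sigma$ to be non-orientable, since the definition of strong embedding stated above only requires $\Sigma$ to be closed.
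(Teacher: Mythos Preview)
Your argument is correct and is a genuinely different construction from the paper's. You build the closed surface directly as a CW complex: one $2$-cell for each cycle in each of the three $2$-factors $M\cup R$, $M\cup B$, $R\cup B$, and you verify the surface axioms locally (two faces per edge, circular link at each vertex). This is clean and self-contained, and it has the pleasant feature of being canonical --- no auxiliary choices are made.

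The paper takes a different route. It first chooses an arbitrary cycle-orientation on the components of $G\setminus M$, and uses this orientation together with the bicoloring to select one of four local surface pieces at each matched edge, assembling a ribbon surface $F_C$ that deformation retracts onto $G$. It then labels the boundary components of $F_C$ (with four labels, depending on color and orientation) and observes that the two boundary arcs parallel to any edge carry distinct labels; hence after capping off, every edge lies on two distinct faces, which is exactly the strong embedding condition. So the paper argues ``build the surface first, then check each edge separates two faces,'' whereas you argue ``attach the faces first, then check the result is a surface.'' Your approach is more elementary and avoids the orientation bookkeeping; the paper's approach is tailored to the ribbon-graph language used throughout and dovetails with the labelling that later appears in the doubled Lee homology discussion.

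One small correction to your closing remark: your surface $\Sigma$ is \emph{not} always orientable, and the consistency check you sketch will in general fail. For instance, take $G=K_4$ with any Tait coloring: each of the three $2$-factors is a single $4$-cycle, so $V-E+F=4-6+3=1$ and $\Sigma\cong\mathbb{RP}^2$. More generally, your construction gives an orientable surface precisely when $G$ is bipartite (one can then reverse the cyclic order $M,R,B$ on one side of the bipartition). This does not affect the validity of your proof, since the proposition does not demand orientability, but the parenthetical about orienting the $2$-cells should be removed or qualified.
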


\begin{proof}
Given \( C \in B ( G,M ) \) we construct a strong embedding of \( G \) as follows. Arbitrarily direct the cycles making up \( C \) (that is, take a cycle-orientation). There are four possible configurations at matched edges
\begin{equation}\label{Eq:config1}
\begin{matrix}
\includegraphics[scale=0.7]{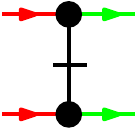}~ & \includegraphics[scale=0.7]{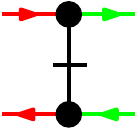}~ &\includegraphics[scale=0.7]{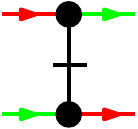}~ & \includegraphics[scale=0.7]{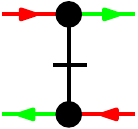}~ \\
\text{(i)} & \text{(ii)} & \text{(iii)} & \text{(iv)}
\end{matrix}
\end{equation}
Form a surface that deformation retracts onto \( G \) using the following components at matched edges, depending on the configuration
\begin{equation}\label{Eq:config2}
\begin{matrix}
\includegraphics[scale=0.7]{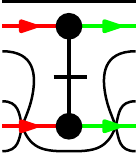}~ & \includegraphics[scale=0.7]{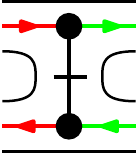}~ &\includegraphics[scale=0.7]{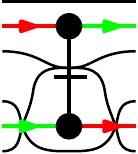}~ & \includegraphics[scale=0.7]{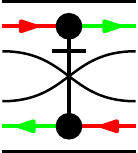}~ \\
\text{(i)} & \text{(ii)} & \text{(iii)} & \text{(iv)}
\end{matrix}
\end{equation}
and attaching bands with the edges as their cores. Denote the surface obtained by \( F_C \). An example is given in \Cref{Sec:example}.

Label the boundary components of \( F_C \) as follows, depending on the coloring and orientation of the edges
\begin{equation}\label{Eq:label}
\includegraphics[scale=1]{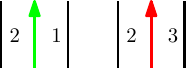}
\end{equation}
By the construction of the surface components in \Cref{Eq:config2} there exists a coherent labelling across \( F_C \), satisfying \Cref{Eq:label}. Notice that given such a coherent labelling the boundary components parallel to each edge possess distinct labels. It follows that by attaching discs to the boundary components of \( F_C \) we obtain a strong embedding of \( G \).
\end{proof}

\subsection{Doubled Lee homology}\label{Sec:dkh}
Khovanov homology is a powerful invariant of links in \( S^3 \) \cite{Khovanov1999}. Lee defined a perturbation of Khovanov homology, and used it to prove a conjecture posed by Bar-Natan regarding the homology of alternating knots \cite{Lee2005,BarNatan2002}. Rasmussen showed that the homology theory defined by Lee contains deep geometric information, contrary to initial expectations \cite{Rasmussen2010}.

There are two known extensions of Lee homology to virtual links; in this section we briefly sketch one of them. We shall treat the extension as a black-box, focussing on a combinatorial result concerning its rank as a group.

Let \( L \) be an oriented virtual link. As constructed in \cite[Definition 3.1]{Rushworth2017}, the \emph{doubled Lee homology} of \( L \), denoted \( \dkh ' ( L ) \), is a finitely generated bigraded Abelian group.

The group \( \dkh ' ( L ) \) is a perturbation of a theory known as \emph{doubled Khovanov homology}, and may be computed from a diagram of \( L \). Given a diagram, a formal chain complex of diagrams is produced, before being converted into algebra. This conversion yields a \emph{bona fide} chain complex, the homology of which is the doubled Khovanov homology of the link \( L \). To obtain \( \dkh ' ( L ) \), one passes to the \( E_{\infty} \)-page of a spectral sequence beginning at doubled Khovanov homology.

The rank of the group \( \dkh ' ( L ) \) depends only on a simple combinatorial property of \( L \).
\begin{definition}\label{Def:2cable}
	Let \( D \) be a virtual link diagram. We say that \( D \) is \emph{\(2\)-colorable} if its arcs may be colored exactly one of two colors such that at classical crossings we have
	\begin{center}
		\includegraphics{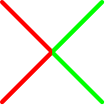}
	\end{center}
	up to rotation. The colors of arcs are unaffected by virtual crossings.
\end{definition}
An example of a \(2\)-colorable virtual link is given in \Cref{Fig:2c}.

\begin{figure}
	\includegraphics[scale=0.65]{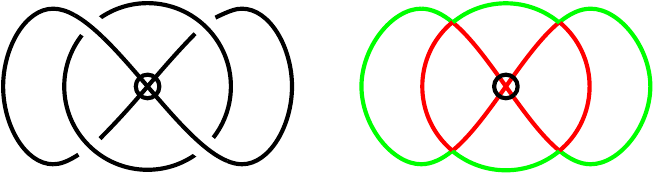}
	\caption{A virtual link diagram and a \(2\)-coloring of it.}
	\label{Fig:2c}
\end{figure}

The \(2\)-colorability of a virtual link may be checked directly from a diagram of it. Let \( L = K_1 \cup K_2 \cup \cdots \cup K_n \) be a virtual link of \( n \) components: \( L \) is \(2\)-colorable if and only if there is an even number of classical crossings between \( K_i \) and \( L \setminus K_i \), for all \( 1 < i < n \).

The rank of the doubled Lee homology of a virtual link is equal to the number of its \(2\)-colorings.
\begin{theorem}[{\cite[Theorem 3.5]{Rushworth2017}}]\label{Thm:leerank}
	Let \( L \) be an oriented virtual link and \( \dkh ' ( L ) \) its doubled Lee homology. Let \( \mathbb{O} \) denote the \( \mathbb{Q} \)-vector space generated by the \(2\)-colorings of \( L \). There is an isomorphism between \( \mathbb{O} \) and \( \dkh'(L) \).
\end{theorem}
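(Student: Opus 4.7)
The plan is to adapt Lee's classical argument to the doubled virtual setting, establishing the stronger claim that there is an explicit basis of canonical cycles indexed by the $2$-colorings of $L$. Since $\dkh'(L)$ is defined as the $E_\infty$-page of a spectral sequence starting at doubled Khovanov homology, a natural strategy is to exhibit cycles at the Lee-perturbed level and then argue they pass to a basis of $\dkh'(L)$.

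First, I would perform the analogue of Lee's change of basis inside the Frobenius algebra underlying the doubled Khovanov complex, replacing the generators by the eigenvectors of the relevant multiplication map. In Lee's classical story this sends $\{1,X\}$ to the idempotents $\{X+1,X-1\}$ and causes the merge and split maps to split as a direct sum of rank-one pieces. In the doubled setting I would carry out the parallel change on both copies of the algebra carried by each circle, recording the induced local forms of the differential. The key consequence is that after this change, the only surviving nonzero matrix entries of the Lee differential correspond to a small, explicit list of local configurations.

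Next, for each $2$-coloring $c$ of a diagram $D$ of $L$, I would build a canonical chain $\Phi_c$ in the doubled Lee complex as follows. The colouring rule of \Cref{Def:2cable} picks out a preferred smoothing at every classical crossing (the smoothing across which the colours extend coherently), distinguishing a single vertex $s_c$ in the cube of resolutions. The circles in $s_c$ inherit a two-colouring, and on each circle I would place one of the new basis elements determined by its inherited colour, exactly as Lee's canonical generators are determined by the induced checkerboard colouring of the oriented resolution. A direct local check using Step~1 shows that $\partial \Phi_c = 0$ and that $\Phi_c$ survives every page of the spectral sequence.

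Finally, I would prove that $\{ \Phi_c \}$ is a basis of $\dkh'(L)$. Linear independence I expect to come from a filtration argument: after ordering the $2$-colorings suitably, $\Phi_c$ admits a distinguished leading term, with all other terms lying strictly lower in the filtration, which rules out cancellations in homology. For the spanning part I would rely on a Gaussian-elimination-style cancellation of acyclic summands in the Lee complex: after the change of basis of Step~1, the complex decomposes into local pieces indexed by coherent choices at each classical crossing, and the non-acyclic pieces are precisely those whose choices glue into a global $2$-coloring. I expect the spanning step to be the main obstacle, because the virtual crossings (which are not part of the cube of resolutions but do affect how circles concatenate across the diagram) and the ``doubled'' layer of the algebra introduce extra potential generators that must be shown to cancel; the fact that the spectral-sequence definition of $\dkh'$ already automates higher-order cancellations is exactly what should make this book-keeping tractable.
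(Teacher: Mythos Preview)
The paper does not contain a proof of this statement: it is quoted verbatim as \cite[Theorem~3.5]{Rushworth2017} and used as an input to the subsequent discussion, with no argument given here. So there is nothing in this paper to compare your proposal against.

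That said, your outline is broadly the right shape for how results of this type are established. The Lee-style change of basis to idempotent generators, the construction of a canonical cycle $\Phi_c$ supported on the preferred resolution determined by a $2$-coloring $c$, and the filtration/cancellation argument for independence and spanning are the standard ingredients, and this is indeed the approach taken in the cited reference. Your caveat about the doubled layer is well placed: in the doubled theory each circle carries an extra $\mathbb{Z}/2$ worth of decoration, and the canonical generators come in pairs rather than singly, so the bookkeeping in the spanning step must track this correctly. If you want to pursue a self-contained proof, that is the place where a sketch becomes an argument; otherwise, citing \cite{Rushworth2017} as the paper does is appropriate.
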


There is a distinct extension of Lee homology to virtual links due Manturov and reformulated by Dye-Kaestner-Kauffman \cite{Manturov2006,Dye2014}. While the rank of their extension is equal to that of doubled Lee homology, the two theories are markedly different. For instance, they differ on the link depicted in \Cref{Fig:2c}.

\subsection{Bigradings of strong embeddings}\label{Sec:strong+dkh}
Combining the construction of \Cref{Sec:strong+cycles} and \Cref{Thm:leerank} allows the association of quantum-topological information to strong embeddings of graphs.

Recall that a graphene is a graph together with extra data. Let \( \mathcal{G} \) be a graphene with underlying graph \( G \). By an abuse of terminology we shall refer to a strong embedding of \( G \) as a strong embedding of \( \mathcal{G} \), and likewise a bicolored cycle of \( G \) as a bicolored cycle of \( \mathcal{G} \).

First, notice that bicolored multicycles behave well with respect to \( \K \).
\begin{proposition}\label{Prop:biject}
	Let \( \mathcal{G} \) be a graphene. The set of bicolored multicycles of \( \mathcal{G} \) and the set of \(2\)-colorings of \( \mathbb{K} ( \mathcal{G} ) \) are in bijection.
\end{proposition}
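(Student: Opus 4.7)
The plan is to set up the bijection diagrammatically, relying on the fact that the functor $\mathbb{K}$ sends the matched edges of (a representative of) $\mathcal{G}$ to classical crossings of $\mathbb{K}(\mathcal{G})$, and sends the non-matched edges to strands running between those crossings, with intersections between non-matched edges becoming virtual crossings.

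First, I would fix a matched diagram $\Gamma$ representing $\mathcal{G}$, with underlying graph $G$ and matching $M$. Since $G$ is trivalent and every vertex is incident to exactly one matched edge, each edge of $G\setminus M$ has both of its endpoints lying on matched edges. Under $\mathbb{K}$, each matched edge is replaced by a classical crossing, and the two non-matched edges meeting at a vertex of a matched edge are cinched into the two arc-pieces of a single strand passing through the corresponding classical crossing. Since virtual crossings do not separate arcs in the sense of \Cref{Def:2cable}, this induces a canonical bijection between the edges of $G\setminus M$ and the strands of $\mathbb{K}(\Gamma)$ between consecutive classical crossings.

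Next, I would define a map $\Phi$ sending a bicolored multicycle $c$ of $\mathcal{G}$ to the arc coloring of $\mathbb{K}(\Gamma)$ obtained by transferring colors along this bijection. To verify that $\Phi(c)$ is a $2$-coloring in the sense of \Cref{Def:2cable}, examine a classical crossing of $\mathbb{K}(\Gamma)$: it corresponds to a matched edge of $\Gamma$, and each of its two strands is the concatenation, at a vertex of the matched edge, of the two non-matched edges incident to that vertex. The bicolored multicycle condition forces these two non-matched edges to carry distinct colors, which is precisely the condition that the two arc-pieces of each strand through the crossing carry distinct colors. The same analysis, applied in reverse, shows that restricting a $2$-coloring of $\mathbb{K}(\Gamma)$ to the edges of $G\setminus M$ via the bijection above yields a bicolored multicycle, and the two constructions are mutually inverse.

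The main obstacle is verifying that the diagrammatic form of the $2$-coloring rule at the classical crossings produced by $\mathbb{K}$ agrees, up to the rotational symmetry in \Cref{Def:2cable}, with the bicolored multicycle condition across every possible configuration of the matched edge (positive versus negative sign, placement of the direction dot, and solid versus hollow vertex decorations). Because $\mathbb{K}$ treats each of these configurations by producing a classical crossing whose two strands are obtained by joining non-matched edges at the two vertices of the matched edge, the cases all collapse to the same local check, so this case analysis, while necessary to carry out, does not present any genuine difficulty.
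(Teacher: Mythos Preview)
Your proposal is correct and follows essentially the same approach as the paper: the paper's proof consists of a single local picture showing that applying \(\mathbb{K}\) (respectively \(\mathbb{K}^{-1}\)) to a bicolored multicycle (respectively a \(2\)-coloring) produces a \(2\)-coloring (respectively a bicolored multicycle), which is exactly the local check you spell out in words. Your exposition is more detailed than the paper's, but the content is identical.
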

\begin{proof}
	The desired bijection is given by \( \mathbb{K} \) and \( \mathbb{K}^{-1} \), as follows
	\begin{equation*}
	\includegraphics[scale=0.75]{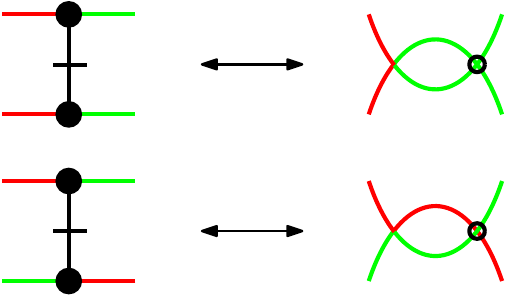}
	\end{equation*}
\end{proof}
As a direct corollary to \Cref{Thm:leerank} and \Cref{Prop:biject} we obtain the following.
\begin{proposition}\label{Prop:leecycle}
	Given a graphene \( \mathcal{G} \), the homology \( \dkh' ( \mathbb{K} ( \mathcal{G} ) ) \) is isomorphic to the \( \mathbb{Q} \)-vector space spanned by the bicolored multicycles of \( \mathcal{G} \).
\end{proposition}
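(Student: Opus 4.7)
The plan is to combine the two immediately preceding results. \Cref{Thm:leerank} identifies $\dkh'(L)$ with the $\mathbb{Q}$-vector space $\mathbb{O}$ generated by the 2-colorings of an oriented virtual link $L$, while \Cref{Prop:biject} gives a bijection between the 2-colorings of $\mathbb{K}(\mathcal{G})$ and the bicolored multicycles of $\mathcal{G}$. The proof then reduces to applying the first result to the link $L = \mathbb{K}(\mathcal{G})$ and translating across the bijection of the second.

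Concretely, I would first invoke \Cref{Thm:leerank} to write an isomorphism
\begin{equation*}
\dkh'(\mathbb{K}(\mathcal{G})) \;\cong\; \mathbb{O}(\mathbb{K}(\mathcal{G})),
\end{equation*}
where $\mathbb{O}(\mathbb{K}(\mathcal{G}))$ denotes the $\mathbb{Q}$-vector space whose basis is the set of 2-colorings of $\mathbb{K}(\mathcal{G})$. Next, I would promote the set-level bijection from \Cref{Prop:biject} to an isomorphism of $\mathbb{Q}$-vector spaces, by defining a linear map on basis elements that sends each 2-coloring of $\mathbb{K}(\mathcal{G})$ to its preimage bicolored multicycle under $\mathbb{K}^{-1}$, extended linearly. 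Since the underlying map on basis sets is a bijection, this extension is an isomorphism onto the $\mathbb{Q}$-vector space $V(\mathcal{G})$ spanned by bicolored multicycles of $\mathcal{G}$. Composing the two isomorphisms yields $\dkh'(\mathbb{K}(\mathcal{G})) \cong V(\mathcal{G})$, as required.

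There is essentially no obstacle here: the argument is a two-step composition, and the only mild subtlety is confirming that the bijection of \Cref{Prop:biject} is canonical enough that the induced linear isomorphism is well defined (i.e.\ does not depend on auxiliary choices). This is immediate from the proof of \Cref{Prop:biject}, where the bijection is exhibited diagrammatically via $\mathbb{K}$ and $\mathbb{K}^{-1}$ acting locally on matched edges and classical crossings, so it respects the identification of basis elements on the nose.
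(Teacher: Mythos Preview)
Your proposal is correct and matches the paper's approach exactly: the paper states this proposition as a direct corollary to \Cref{Thm:leerank} and \Cref{Prop:biject}, which is precisely the two-step composition you describe.
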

We may therefore think of \( \dkh' ( \mathbb{K} ( \mathcal{G}) ) \) as being generated by the bicolored multicycles of \( \mathcal{G} \).

Further, recall that a bicolored multicycle defines a strong embedding. It follows that the non-vanishing of doubled Lee homology guarantees the existence of a strong embedding.
\begin{corollary}\label{Cor:strong}
	Let \( G \) be a graphene. If \( \dkh' ( \mathbb{K} ( \mathcal{G}) ) \) is not the trivial group then \( \mathcal{G} \) has a strong embedding.
\end{corollary}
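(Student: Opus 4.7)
The plan is to chain together the two preceding results, \Cref{Prop:leecycle} and \Cref{Prop:bcm2c}, essentially as a one-line deduction. The hypothesis is that \(\dkh'(\K(\G))\) is nontrivial; the goal is to produce a strong embedding of \(\G\). Since \Cref{Prop:leecycle} identifies \(\dkh'(\K(\G))\) with the \(\Q\)-vector space spanned by the bicolored multicycles of \(\G\), the nontriviality hypothesis immediately forces the set \(B(G,M)\) of bicolored multicycles to be nonempty (an empty generating set would span the zero vector space).

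Having established that \(B(G,M)\neq\emptyset\), I would pick any element \(C\in B(G,M)\) and feed it to \Cref{Prop:bcm2c}, which constructs, from \(C\), a surface \(F_C\) that deformation retracts onto \(G\); capping off the boundary circles of \(F_C\) by discs yields a closed surface in which \(G\) embeds so that each complementary face is a disc, i.e.\ a strong embedding of \(G\) (and hence of \(\G\) in the sense agreed upon at the start of \Cref{Sec:strong+dkh}).

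There is essentially no obstacle: all the content has been packaged into the earlier results. The only thing worth flagging for the reader is that one really does need \Cref{Prop:leecycle}, not merely the rank statement of \Cref{Thm:leerank}, because one must know that the generators of \(\dkh'(\K(\G))\) correspond to bicolored multicycles of the graphene \(\G\) itself (via the bijection of \Cref{Prop:biject}), not merely to \(2\)-colorings of the associated virtual link diagram. Once this identification is in hand, the corollary is a formal consequence.
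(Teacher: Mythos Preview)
Your argument is correct and matches the paper's own reasoning exactly: the paper simply notes that, by \Cref{Prop:leecycle}, nontriviality of \(\dkh'(\K(\G))\) forces the existence of a bicolored multicycle, which by \Cref{Prop:bcm2c} yields a strong embedding. Your additional remark about needing \Cref{Prop:biject} (via \Cref{Prop:leecycle}) rather than just \Cref{Thm:leerank} is a fair clarification, though the paper leaves this implicit.
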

This result allows us to guarantee that many graphs possess strong embeddings.
\begin{corollary}\label{Cor:strongprime}
	Let \( \G \) be a graphene such that \( \K ( \G) \) is a \(2\)-colorable virtual link. Then the underlying graph of \( \G \) has a strong embedding.
\end{corollary}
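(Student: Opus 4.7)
The plan is to chain together the results already established in \Cref{Sec:strong+dkh}, so that this corollary is essentially a direct consequence of \Cref{Cor:strong} combined with \Cref{Thm:leerank}. By hypothesis, \( \K(\G) \) admits at least one \(2\)-coloring (this is what \(2\)-colorability asserts, by \Cref{Def:2cable}). I would then invoke \Cref{Thm:leerank} to conclude that \( \dkh' ( \K ( \G ) ) \) has rank at least one, hence is not the trivial group.

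Having ensured non-triviality of doubled Lee homology, I would then apply \Cref{Cor:strong} to obtain a strong embedding of \( \G \), which by definition (the abuse of terminology recalled at the start of \Cref{Sec:strong+dkh}) is a strong embedding of the underlying graph of \(\G\). This finishes the argument.

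Alternatively, and perhaps more transparently for the reader, one may bypass the homological machinery and route the proof directly through \Cref{Prop:biject} and \Cref{Prop:bcm2c}: a \(2\)-coloring of \( \K(\G) \) corresponds under \( \K^{-1} \) to a bicolored multicycle \( C \) of \( \G \) by \Cref{Prop:biject}, and then \Cref{Prop:bcm2c} produces from \( C \) a strong embedding of the underlying graph of \(\G\). Both routes are essentially one-line deductions; I would likely present the homological route to emphasize the quantum-topological content being imported.

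There is no real obstacle here, since every tool needed has been set up in the preceding subsections; the only subtlety is to observe explicitly that \emph{some} \(2\)-coloring exists (so that the \(\mathbb{Q}\)-vector space \( \mathbb{O} \) of \Cref{Thm:leerank} is non-zero), which is immediate from \Cref{Def:2cable}. I would keep the proof to two or three sentences and close by remarking that the result gives a large, easily-checkable supply of graphenes whose underlying graphs admit strong embeddings, since \(2\)-colorability of \( \K(\G) \) is verifiable by a simple parity count on the classical crossings between components.
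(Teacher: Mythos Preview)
Your proposal is correct and matches the paper's approach: the corollary is stated immediately after \Cref{Cor:strong} with no explicit proof, the preceding sentence (``This result allows us to guarantee\ldots'') making clear it is intended as a direct consequence of \Cref{Cor:strong} together with \Cref{Thm:leerank}. Your alternative route through \Cref{Prop:biject} and \Cref{Prop:bcm2c} is equally valid and indeed more direct, but the paper's framing emphasizes the homological route you chose to present.
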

The family of \(2\)-colorable virtual links includes alternating links and checkerboard-colorable links. In the case of checkerboard-colorable virtual links it can be shown that the guaranteed strong embedding is orientable, in fact.

The doubled Lee homology of a virtual link is a bigraded group. Specifically, to each element of \( \dkh ' ( L ) \) there is an associated pair of integers \( (i,j) \). The integer \( i \) is known as the \emph{homological grading}; the homological grading support of \( \dkh ' ( L ) \) is easy to determine, and may be read off from a diagram of \( L \).

The second integer, \( j\), is known as the \emph{quantum grading}, so-called as it is related to the quantum representation theory of \( \mathfrak{sl}_2 \). In contrast to the homological grading, determining the quantum grading support of \( \dkh ' ( L ) \) requires computation of the homology groups.

As described above, doubled Lee homology is an extension of a theory known as Lee homology; the latter was originally defined for classical links, and is denoted \( Kh ' ( L ) \). The group \( Kh ' ( L ) \) is also bigraded, with a homological and a quantum grading.

Given a classical knot (a one-component link) the group \( Kh ' ( K ) \) is of rank \(2\), and the quantum grading of one generator determines that of the other.  Additionally, the homological grading of both generators is \( 0 \). Picking either generator, we see that the data of \( Kh ' ( K ) \) are equivalent to an integer: the quantum grading of the chosen generator. For full details see \cite[Section 3]{Rasmussen2010}.

This establishes that Lee homology associates an integer to a classical knot. Rasmussen demonstrated that, surprisingly, this integer contains deep geometric information about the knot. Specifically, given a knot \( K : S^1 \hookrightarrow S^3 \), one may consider smoothly embedded surfaces \( F \hookrightarrow B^4 \), such that \( \partial F = K \). Determining the minimum genus of such surfaces for a given knot is a difficult problem. Rasmussen showed that the integer assigned by Lee homology yields a very useful lower bound on this genus \cite[Theorem 1]{Rasmussen2010}. This integer is strong enough to provide a combinatorial proof of the Milnor conjecture, and is sensitive to smooth structures on \(4\)-manifolds \cite[page 13]{Rasmussen2005}.

Doubled Lee homology also contains interesting geometric information about virtual links \cite[Section 4]{Rushworth2017}. In light of \Cref{Sec:strong+cycles,Sec:dkh} is it interesting to ask if doubled Lee homology contains unexpected graph-theoretic information also.

One such line of enquiry is as follows. Let \( \mathcal{G} \) be a graphene with an even perfect matching, and \( C \) a bicolored multicycle of it. By \Cref{Prop:leecycle} there is a generator of \( \dkh' ( \mathbb{K} ( \mathcal{G} ) ) \) associated to \( C \). This generator has a bigrading, \( (i,j) \), and via the isomorphism of \Cref{Prop:leecycle} we may associate the pair \( (i,j) \) to \( C \) also. 

As demonstrated in \Cref{Sec:strong+cycles}, the bicolored multicycle \( C \) defines a strong embedding of \( \mathcal{G} \). It follows that doubled Lee homology associates pairs of integers to certain strong embeddings. It is natural to ask the following intriguing questions.
\begin{question}\label{Q:1}
	Suppose a strong embedding is associated the bigrading \( (i,j) \). What graph-theoretic information does this bigrading contain?
\end{question}
\begin{question}
	What other graph-theoretic objects appear in \( \dkh' ( \mathbb{K} ( \mathcal{G} ) ) \)?
\end{question}
Further lines of enquiry are opened by considering the place of doubled Lee homology in the hierarchy of homology theories. As mentioned above, doubled Lee homology is a perturbation of another theory, doubled Khovanov homology. Specifically, one obtains doubled Lee homology as the \( E_{\infty} \)-page of a spectral sequence whose \( E_2 \)-page is doubled Khovanov homology. It follows that doubled Lee homology is contained in doubled Khovanov homology.
\begin{question}
	What graph-theoretic information is contained in the larger theory, doubled Khovanov homology?
\end{question}

\subsection{Example}\label{Sec:example}
Consider the graphene \( \mathcal{G} \) and the bicolored multicycle \( C \)
\begin{equation*}
\includegraphics[scale=0.5]{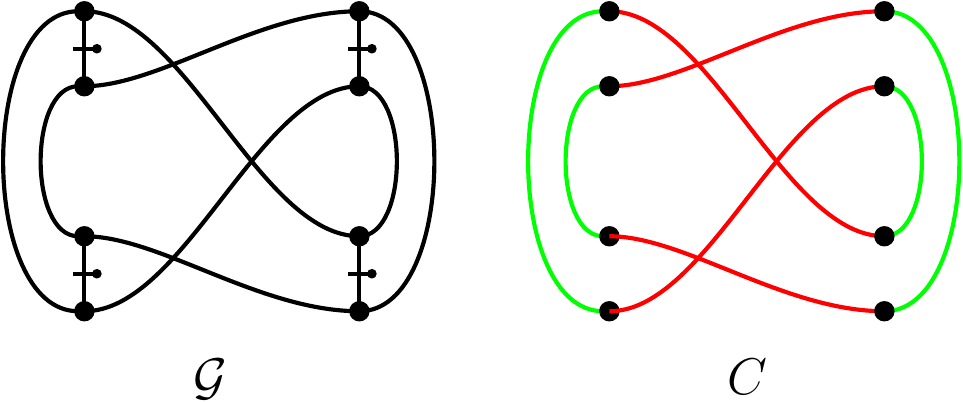}
\end{equation*}
The process described in \Cref{Sec:strong+cycles} associates to \( C \) the following strong embedding of \( \mathcal{G} \)
\begin{equation*}
\includegraphics[scale=0.4]{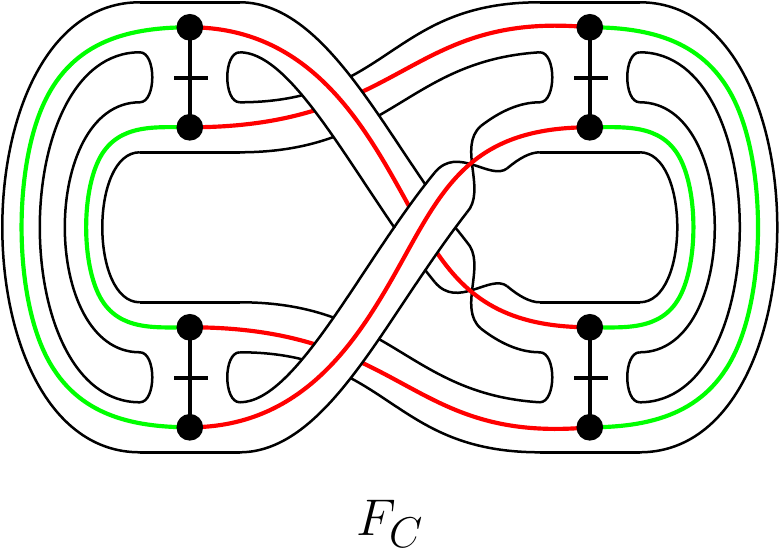}
\end{equation*}
(with discs attached to the boundary components).

Further, \( \mathbb{K}( \mathcal{G} ) = L \), where \( L \) is the link depicted in \Cref{Fig:2c}. The bicolored multicycle \( C \) is sent to the \(2\)-coloring depicted in \Cref{Fig:2c} under the bijection of \Cref{Prop:biject}. The generator of \(  \dkh' ( L ) \) associated to this \(2\)-coloring is of bigrading \( (-2,-5) \). It follows that the strong embedding depicted above is associated the bigrading \( (-2,-5) \). This is an instance of the setup of \Cref{Q:1}.

\bibliographystyle{plain}
\bibliography{library}

\end{document}